\documentclass[12pt,british]{amsart}
\usepackage{babel}
\usepackage[T1]{fontenc}
\usepackage[utf8]{inputenc}
\usepackage{geometry}
\geometry{verbose,tmargin=3cm,bmargin=3cm,lmargin=3cm,rmargin=3cm}
\usepackage{units}
\usepackage{mathtools}
\usepackage{amsmath}
\usepackage{amsthm}
\usepackage{amssymb}
\usepackage{stmaryrd}
\usepackage{graphicx}
\usepackage{enumitem}
\usepackage[unicode=true,pdfusetitle,
 bookmarks=true,bookmarksnumbered=false,bookmarksopen=false,
 breaklinks=false,pdfborder={0 0 1},backref=false,colorlinks=false]
 {hyperref}


\makeatletter

\theoremstyle{plain}
\newtheorem{thm}{Theorem}
\newtheorem{cor}[thm]{Corollary}

\newtheorem{lem}[thm]{Lemma}
\newtheorem{prop}[thm]{Proposition}

\theoremstyle{definition}
\newtheorem{defn}[thm]{Definition}

\theoremstyle{remark}
\newtheorem{rem}[thm]{Remark}
\newtheorem{notation}[thm]{Notation}

\newenvironment{subproof}[1][\proofname]{%
    \begin{proof}[#1]%
    }{\end{proof}}

\newlist{casenv}{enumerate}{4}
\setlist[casenv]{leftmargin=*,align=left,widest={iiii}}
\setlist[casenv,1]{label={{\itshape\ Case} \arabic*.},ref=\arabic*}
\setlist[casenv,2]{label={{\itshape\ Case} \roman*.},ref=\roman*}
\setlist[casenv,3]{label={{\itshape\ Case \alph*.}},ref=\alph*}
\setlist[casenv,4]{label={{\itshape\ Case} \arabic*.},ref=\arabic*}

\makeatother

\DeclarePairedDelimiter{\paren}{\lparen}{\rparen}%
\DeclarePairedDelimiter{\pbrace}{\lbrace}{\rbrace}%
\DeclarePairedDelimiter{\abs}{\lvert}{\rvert}%
\DeclarePairedDelimiter{\norm}{\lVert}{\rVert}%

\newcommand{\expfrac}[2]{{\scriptstyle #1}/\raisebox{-1pt}{\ensuremath{{\scriptstyle #2}}}}

\newcommand{\Aut}{\operatorname{Aut}}
\newcommand{\diag}{\operatorname{diag}}
\newcommand{\id}{\operatorname{id}}
\newcommand{\Jac}{\operatorname{Jac}}
\renewcommand{\Re}{\operatorname{Re}}

\begin{document}

\title[Attracting Fatou cycles of type $\mathbb{C}\times(\mathbb{C}^{*})^{d-1}$]{Periodic cycles of attracting Fatou components of type $\mathbb{C}\times(\mathbb{C}^{*})^{d-1}$
in automorphisms of $\mathbb{C}^{d}$}
\author[J. Reppekus]{Josias Reppekus}
\thanks{The author acknowledges the MIUR Excellence Department Project awarded
to the Department of Mathematics, University of Rome Tor Vergata,
CUP E83C18000100006}
\address{Dipartimento di Matematica\\
Universit\`{a} di Roma ``Tor Vergata''\\
Via Della Ricerca Scientifica 1 \\
00133, Roma, Italy}
\email{reppekus@mat.uniroma2.it}

\subjclass[2010]{Primary 37F50; Secondary 32A19, 39B12}
\keywords{Fatou set; Dynamical systems; Several complex variables}

\begin{abstract}
We generalise a recent example by F.~Bracci, J.~Raissy and B.~Stens{\o}nes
to construct automorphisms of $\mathbb{C}^{d}$ admitting an arbitrary
finite number of non-recurrent Fatou components, each biholomorphic
to $\mathbb{C}\times(\mathbb{C}^{*})^{d-1}$ and all attracting to
a common boundary fixed point. These automorphisms can be chosen such
that each Fatou component is invariant or such that the components
are grouped into periodic cycles of any common period. We further
show that no orbit in these attracting Fatou components can converge
tangent to a complex submanifold, and that every stable orbit near
the fixed point is contained either in these attracting components
or in one of $d$ invariant hypersurfaces tangent to each coordinate
hyperplane on which the automorphism acts as an irrational rotation.
\end{abstract}

\maketitle

\tableofcontents{}
\newpage{}

\section*{Introduction}

When studying the behaviour of iterates of a holomorphic endomorphism
$F$ of $\mathbb{C}^{d}$, $d\ge1$, one of the basic objects of interest
is the \emph{Fatou set}
\[
\mathcal{F}:=\pbrace{z\in\mathbb{C}^{d}\mid{{\pbrace{F^{n}}}}_{n\in\mathbb{N}}\text{ is normal on a neighbourhood of }z}.
\]
A connected component of $\mathcal{F}$ is called a \emph{Fatou component}
of $F$. Let $V$ be a Fatou component of $F$. Then $V$ is \emph{invariant},
if $F(V)=V$. More generally, $V$ is \emph{$p$-periodic} for $p\in\mathbb{N}^{*}$,
if $F^{p}(V)=V$ and $F^{q}(V)\neq V$ for $q<p$. In this case we
call $(V,F(V),\cdots,F^{p-1}(V))$ a \emph{$p$-periodic cycle} of
Fatou components. A Fatou component $V$ is \emph{attracting to $P\in\mathbb{C}^{d}$},
if $(F|_{V})^{n}\to P$ (then in particular $F(P)=P$). A periodic
Fatou component $V$ attracting to $P$ is \emph{recurrent} if $P\in V$
and \emph{non-recurrent} if $P\in\partial V$.

Every recurrent attracting Fatou component of an automorphism of $\mathbb{C}^{d}$
is biholomorphic to $\mathbb{C}^{d}$ (this follows from \cite[Theorem~2]{PetersVivasWold2008AttractingbasinsofvolumepreservingautomorphismsofmathbbCk}
and the appendix of \cite{RosayRudin1988HolomorphicmapsfrombfCntobfCn}).
For \emph{polynomial} automorphisms of $\mathbb{C}^{2}$, even non-recurrent
attracting periodic Fatou components are biholomorphic to $\mathbb{C}^{2}$
(by \cite[Theorem~6]{LyubichPeters2014ClassificationofinvariantFatoucomponentsfordissipativeHenonmaps}
and \cite{Ueda1986LocalstructureofanalytictransformationsoftwocomplexvariablesI}).

In \cite{BracciRaissyStensonesAutomorphismsofmathbbCkwithaninvariantnonrecurrentattractingFatoucomponentbiholomorphictomathbbCtimesmathbbCk1},
F.\ Bracci, J.\ Raissy and B.\ Stens{\o}nes proved the existence
of automorphisms of $\mathbb{C}^{d}$ with a non-recurrent attracting
invariant Fatou component biholomorphic to $\mathbb{C}\times(\mathbb{C}^{*})^{d-1}$.
In particular this provided first examples of automorphisms of $\mathbb{C}^{2}$
with a multiply connected attracting Fatou component (those are necessarily
non-polynomial by the previously mentioned results). Based on this,
it is easy to construct automorphisms of $\mathbb{C}^{d}$ with non-recurrent
attracting invariant Fatou components biholomorphic to $\mathbb{C}^{d-m}\times(\mathbb{C}^{*})^{m}$
for $m<d$ (see Corollary~\ref{cor:CCstarVariants}).

By \cite[Proposition~5.1]{Ueda1986LocalstructureofanalytictransformationsoftwocomplexvariablesI},
attracting Fatou components are Runge, and, by \cite{Serre1955UneproprietetopologiquedesdomainesdeRunge},
for every Runge domain $D\subseteq\mathbb{C}^{d}$, we have $H^{q}(D)=0$
for $q\ge d$. Hence $\mathbb{C}\times(\mathbb{C}^{*})^{d-1}$ has
the highest possible degree of non-vanishing cohomology for an attracting
Fatou component. It is an open question whether all non-recurrent
attracting invariant Fatou components of automorphisms are biholomorphic
to a product of copies of $\mathbb{C}$ and $\mathbb{C}^{*}$. To
the author's knowledge it is not even clear these are the only homotopy
types that can occur.

Non-recurrent attracting Fatou components of type $\mathbb{C}^{d}$
appear in parabolic flowers (generalisations of one-dimensional Leau-Fatou
flowers), that is in arbitrary finite number around a fixed point
and grouped in periodic cycles. In this paper we generalise the example
of \cite{BracciRaissyStensonesAutomorphismsofmathbbCkwithaninvariantnonrecurrentattractingFatoucomponentbiholomorphictomathbbCtimesmathbbCk1}
to higher orders to show that the same can occur for type $\mathbb{C}\times(\mathbb{C}^{*})^{d-1}$.
We further extend their results to provide a complete classification
of stable orbits near the fixed point in these examples.

We will be studying germs $F$ of automorphisms of $\mathbb{C}^{d}$
at the origin of the form
\begin{equation}
F(z^{1},\ldots,z^{d})=(\lambda_{1}z^{1},\ldots,\lambda_{d}z^{d})\paren [\Big]{1-\frac{(z^{1}\cdots z^{d})^{k}}{kd}}+O(\norm z^{l}),\label{eq:FIntro}
\end{equation}
where $\lambda_{1},\ldots,\lambda_{d}$ are of unit modulus, not roots
of unity, such that $F$ is one-resonant via $\lambda_{1}\cdots\lambda_{d}=1$,
i.e.\ $\lambda_{1}^{m_{1}}\cdots\lambda_{d}^{m_{d}}=\lambda_{j}$
for $m_{1},\ldots,m_{d}\in\mathbb{N}$ and $j\in\{1,\ldots,d\}$ if
and only if $(m_{1},\ldots,m_{d})=(q,\ldots,q)+e_{j}$ for some $q\in\mathbb{N}$
(see Definition~\ref{def:oneRes}), and $l>2kd+1$. In some parts
we will in addition assume all subsets $\{\lambda_{1},\ldots,\lambda_{d}\}\backslash\{\lambda_{j}\}$,
$j=1,\ldots,d$ to satisfy the Brjuno condition (Definition~\ref{def:BrjunoAndPart}).
For $k=1$ this is precisely the set-up of \cite{BracciRaissyStensonesAutomorphismsofmathbbCkwithaninvariantnonrecurrentattractingFatoucomponentbiholomorphictomathbbCtimesmathbbCk1}.

Our main results are the following:
\begin{thm}
\label{thm:FatouCCstarMulti}Let $F$ be a germ of automorphisms of
$\mathbb{C}^{d}$ at the origin of the form (\ref{eq:FIntro}). Then
$F$ admits $k$ disjoint, completely invariant \emph{(}$F(\Omega_{h})=\Omega_{h}=F^{-1}(\Omega_{h})$\emph{)},
attracting basins $\Omega_{0},\ldots,\Omega_{k-1}$ such that
\begin{enumerate}
\item If each subset $\{\lambda_{1},\ldots,\lambda_{d}\}\backslash\{\lambda_{j}\}$,
$j=1,\ldots,d$ satisfies the Brjuno condition, then:
\begin{enumerate}
\item $\Omega_{h}$ is a union of Fatou components for each $h=0,\ldots,k-1$,
\item $F$ admits Siegel hypersurfaces (i.e.~invariant hypersurfaces on
which $F$ acts as a rotation) tangent to each coordinate hyperplane,
\item All stable orbits of $F$ near the origin are contained in one of
the above.
\end{enumerate}
\item If $F$ is a global automorphisms of $\mathbb{C}^{d}$, then for each
$h=0,\ldots,k-1$ there exists a biholomorphic map $\phi_{h}:\Omega_{h}\to\mathbb{C}\times(\mathbb{C}^{*})^{d-1}$
conjugating $F$ to
\[
(\zeta^{1},\cdots,\zeta^{d})\mapsto(\zeta^{1}+1,\zeta^{2},\ldots,\zeta^{d}).
\]
Moreover, there exist automorphisms of the form (\ref{eq:FIntro})
for each admissible choice of $\lambda_{1},\ldots,\lambda_{d}$ and
$l>2kd+1$.
\end{enumerate}
\end{thm}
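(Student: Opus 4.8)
The germ $F$ in \eqref{eq:FIntro} is a perturbation of the diagonal rotation $L=\diag(\lambda_1,\dots,\lambda_d)$ by a term that is one-resonant of order $k$ along the monomial $u:=(z^1\cdots z^d)$. The first step is to pass to a normal form along the resonant direction. Writing $u=z^1\cdots z^d$ and using one-resonance via $\lambda_1\cdots\lambda_d=1$, I would show that $u\circ F = u\cdot(1-u^k/d)^d + O(\norm{z}^{l'}) = u - u^{k+1} + (\text{higher order})$, so that the scalar quantity $u$ behaves under iteration like the standard parabolic germ $w\mapsto w - w^{k+1}+\cdots$ at $0\in\mathbb{C}$. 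Classical one-dimensional Leau–Fatou theory then provides $k$ attracting petals for $u$, rotated by the $k$-th roots of $-1$ (equivalently, $k$ sectors where $\Re(u^{-k})\to+\infty$ along orbits), which will be the source of the $k$ basins $\Omega_0,\dots,\Omega_{k-1}$. The complete invariance $F(\Omega_h)=\Omega_h=F^{-1}(\Omega_h)$ will follow because the petals for the scalar dynamics are completely invariant and because, away from the coordinate hyperplanes $\{z^j=0\}$ (where $u=0$), the fibers of $u$ are moved consistently; the three coordinate hyperplanes are $F$-invariant and carry the irrational rotations, so orbits either stay on them or enter one of the petals.

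\medskip

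\noindent\textbf{Construction of the basins.} Concretely, for each $k$-th root $\rho_h$ of $-1/k$ (or the appropriate normalization coming from the $-u^{k+1}$ term) I would define $\Omega_h$ as the set of points $z$ with $z^1\cdots z^d\neq 0$ whose forward orbit under $F$ eventually lands in a fixed small petal $P_h\subset\mathbb{C}$ for the $u$-dynamics and stays there — then pull back by all of $F^{-n}$ to get complete invariance. One checks $\Omega_h$ is open (normality/openness of the petal condition), nonempty (the petal is a genuine open sector touching $0$, and $u$ is a submersion off the hyperplanes), the $\Omega_h$ are pairwise disjoint (distinct petals are disjoint near $0$ and the scalar orbit determines which petal), and $(F|_{\Omega_h})^n\to 0$ (since $u\circ F^n\to 0$ forces, together with the contraction $\abs{\lambda_j}=1$ and the $(1-u^k/(kd))$ factor producing genuine contraction in each coordinate, $z\circ F^n\to 0$). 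This gives the bare statement of the theorem: $k$ disjoint completely invariant attracting basins. Items (1) and (2) of the theorem are then cited/proved separately using the Brjuno hypotheses (Siegel hypersurfaces via a Pöschel-type linearization on each hyperplane, and the classification of stable orbits), and item (2')—the biholomorphism to $\mathbb{C}\times(\mathbb{C}^*)^{d-1}$ and existence of global automorphisms realizing the germ—uses the abstract basin construction à la Bracci–Raissy–Stensønes together with an Andersén–Lempert/Forstnerič–Rosay density argument.

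\medskip

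\noindent\textbf{Main obstacle.} The crux is controlling the dynamics \emph{transverse} to the resonant monomial: knowing $u\circ F^n\to 0$ in a petal is far from knowing $z\circ F^n\to 0$, because in the directions where $\abs{\lambda_j}=1$ there is a priori no contraction at all, only the small factor $1-u^k/(kd)$ and the error $O(\norm{z}^l)$. The key estimate will be that along an orbit in a petal, $\Re(u^{-k}\circ F^n)\sim n$ (so $\abs{u\circ F^n}\sim n^{-1/k}$), and then a weighted estimate—tracking $\norm{z}$ against a suitable power of $\abs{u}$, using $l>2kd+1$ to absorb the remainder—shows $\norm{z\circ F^n}\to 0$ at a polynomial rate. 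This is exactly where the condition $l>2kd+1$ is used, and getting the bookkeeping right (the correct weight, and that the error terms are genuinely lower order after $k$ steps of the parabolic iteration) is the technical heart; for $k=1$ this reduces to the estimates of Bracci–Raissy–Stensønes, and the generalization to arbitrary $k$ requires the sharper petal asymptotics $u\circ F^n\sim (\rho_h)(kn)^{-1/k}$ and a careful induction. Once the orbit is shown to converge to the origin at a controlled rate and to stay off the hyperplanes, openness and the remaining bookkeeping for disjointness and complete invariance are routine.
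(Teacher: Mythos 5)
Your high-level strategy matches the paper's: pass to the resonant variable $u=z^1\cdots z^d$, observe it iterates parabolically of order $k$, build $k$ Leau--Fatou petals, control $z$ transverse to $u$, then invoke Brjuno-type hypotheses for the Siegel hypersurfaces and orbit classification, and jet interpolation plus Fatou coordinates for part~(2). However, there are two substantive gaps.

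First, your definition of $\Omega_h$ as the set of points whose $u$-orbit eventually lands in a petal $P_h$ for the scalar dynamics is circular as a starting point: the error term $O(\norm{z}^l)$ in \eqref{eq:FIntro} cannot be absorbed without an a priori bound relating $\norm{z}$ to $\abs{u}$, so you cannot deduce $z_n\to 0$ (nor even that the $u$-orbit really stays parabolic) from the petal condition alone. The paper instead takes the local basins straight from the Bracci--Zaitsev theorem on one-resonant germs: $B_h = S_h(R,\theta)\cap W(\beta)$, where the wedge condition $W(\beta)=\{\abs{z^j}<\abs{u}^\beta\}$ is built into the definition precisely so that $O(\norm{z}^l)=O(\abs{u}^{\beta l})$ is small enough (using $\beta(l+d-1)>2k+1$) to close the induction showing $F(B_h)\subseteq B_h$ and $F^n|_{B_h}\to 0$. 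Then $\Omega_h:=\bigcup_n F^{-n}(B_h)$. Your ``weighted estimate'' is exactly this, but it has to enter the \emph{definition} of the local basin, not be recovered afterward. Your claim that the factor $1-u^k/(kd)$ produces ``genuine contraction in each coordinate'' is also not right: it is only a $1-O(1/n)$ correction, and the net decay $\abs{z_n^j}\approx n^{-1/(kd)}$ comes from the accumulated product, which again is only controlled once one is inside $W(\beta)$.

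Second, for parts (1b) and (1c), ``a P\"oschel-type linearization on each hyperplane'' is not enough as stated. P\"oschel's theorem linearizes on a submanifold \emph{tangent} to a set of eigenspaces whose multipliers satisfy the partial Brjuno condition; it does not by itself eliminate the resonant-tail terms in \eqref{eq:FIntro} on the full germ, nor does it directly give you the dichotomy ``stable orbit $\Rightarrow$ in a Siegel hypersurface or in some $B_h$.'' The paper's route is a genuinely new elimination theorem (Theorem~\ref{thm:IteratedEliminationSimple}), iterated to put $F$ into the form $F_{\mathrm N}(z)+O(z^{l\alpha})$ (Corollary~\ref{cor:NicerTail}), so that the coordinate hyperplanes become exactly invariant and the estimate $U_{n+1}=U_n+1+O(U_n^{-1})$ holds \emph{for every} orbit near $0$, not just those starting in $W(\beta)$. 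That is the engine behind Proposition~\ref{prop:stableOrbitBehaviour}; equivalence with the hypothesis you state (Brjuno on each $(d-1)$-element subset) requires an extra argument (Lemma~\ref{lem:BrjunoImpliesAll}). Your outline skips all of this and would not yield (1c). The remaining pieces you cite --- jet interpolation for existence, and Fatou coordinates for the biholomorphism to $\mathbb{C}\times(\mathbb{C}^*)^{d-1}$ --- are in the right spirit, though the paper's version avoids the BRS fibre-bundle argument by choosing coordinates $\tau_j=\psi^{1/(kd)}\sigma_j$ with the cleaner functional equation $\tau_j\circ F=\lambda_j\tau_j$ that extend globally in one step.
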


\begin{rem}
\label{rem:geometryIntro}Each global basin $\Omega_{h}$ arises as
the union of all iterated preimages of an explicit local attracting
basin $B_{h}$ of the desired homotopy type whose external geometry
becomes apparent in polar decomposition as depicted in Figure~\ref{fig:Argument-components}
for $d=k=2$. The global basins are more abstract, so we don't know
much about their outer shape or arrangement.
\end{rem}

\begin{rem}
\label{rem:OrbitsIntro}Each attracting orbit in a basin $\Omega_{h}$
converges tangent to a real $d$-dimensional submanifold (depending
on the orbit), but not tangent to any complex subspace.
\end{rem}

\begin{thm}
\label{thm:FatouCCstarPeriodic}Let $p\in\mathbb{N}^{*}$ divide $k$.
Then there exist automorphisms $G$ of $\mathbb{C}^{d}$ such that
$G^{p}$ has the form (\ref{eq:FIntro}) and $G(\Omega_{h})=\Omega_{h+p\bmod k}$
for $h=0,\ldots,k-1$. In particular, $G$ admits $k/p$ disjoint
$p$-cycles of non-recurrent, attracting Fatou components biholomorphic
to $\mathbb{C}\times(\mathbb{C}^{*})^{d-1}$, that are all attracted
to the origin.
\end{thm}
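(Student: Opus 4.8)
The plan is to obtain $G$ as a global realisation of an explicit germ whose iterate $G^{\circ p}$ has the form~(\ref{eq:FIntro}). First fix $\mu_1,\dots,\mu_d$ of unit modulus, none a root of unity, with $\mu_1\cdots\mu_d=e^{2\pi i/p}$ and such that $\lambda_j:=\mu_j^{\,p}$ form an admissible tuple (one-resonant via $\lambda_1\cdots\lambda_d=1$); this holds for a generic choice on the torus $\{\,\mu_1\cdots\mu_d=e^{2\pi i/p}\,\}$, and then automatically $\lambda_1\cdots\lambda_d=(\mu_1\cdots\mu_d)^p=1$. For a polynomial $\gamma(u)=1-\frac{u}{kdp}+O(u^2)$ still to be chosen, consider the polynomial germ at the origin
\[
\hat G(z^1,\dots,z^d)=\bigl(\mu_1z^1\gamma(\pi^k),\dots,\mu_dz^d\gamma(\pi^k)\bigr),\qquad \pi:=z^1\cdots z^d .
\]
Here the hypothesis $p\mid k$ enters through $(\mu_1\cdots\mu_d)^k=1$: this forces $\hat G$ to preserve the class of germs of the shape $z^j\mapsto(\text{scalar})\cdot z^j\cdot(\text{power series in }\pi^k)$, so that $\hat G^{\circ n,j}(z)=\mu_j^{\,n}z^j g_n(\pi^k)$ for suitable power series $g_n$, while on the invariant coordinate $u=\pi^k$ the germ $\hat G$ induces the one-variable germ $\Gamma(u):=u\,\gamma(u)^{dk}=u-\frac{u^2}{p}+\cdots$, tangent to the identity, with no rotation surviving on $u$. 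Consequently $\hat G^{\circ p}$ has the form~(\ref{eq:FIntro}) — with these $\lambda_j$ and any prescribed $l>2kd+1$ — precisely when the iterate $\Gamma^{\circ p}(u)$ agrees with the target germ $u\bigl(1-\frac{u}{kd}\bigr)^{dk}$ up to an order $N=N(k,d,l)$.

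Both germs have $2$-jet $u-u^2+\cdots$, so this is a finite system of equations on the coefficients of $\gamma$, and it can always be solved: the group of formal germs of $(\mathbb C,0)$ tangent to the identity is a projective limit of unipotent groups, so every such germ has a unique formal $p$-th iterative root (divide the formal infinitesimal generator by $p$). Truncating the root of $u(1-\frac{u}{kd})^{dk}$ to a polynomial $\Gamma$ and setting $\gamma(u):=(\Gamma(u)/u)^{1/(dk)}$ (again truncated) yields the required $\gamma$, for then $\Gamma^{\circ p}$ matches the target to the chosen order and hence $g_p(u)=1-\frac{u}{kd}+O(u^{N})$.

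To globalise, apply the construction behind the last assertion of Theorem~\ref{thm:FatouCCstarMulti}(2), which realises a prescribed polynomial germ fixing the origin by a global automorphism with the same jet: this produces an automorphism $G$ of $\mathbb C^d$ with $G(0)=0$ and $G-\hat G=O(\norm z^{l})$. Composition preserves agreement of $(l-1)$-jets for maps fixing the origin, so $F:=G^{\circ p}$ differs from $\hat G^{\circ p}$ by $O(\norm z^{l})$ and is therefore of the form~(\ref{eq:FIntro}). Applying Theorem~\ref{thm:FatouCCstarMulti} to $F$ yields the $k$ disjoint, completely invariant attracting basins $\Omega_0,\dots,\Omega_{k-1}$; each is connected, hence by part~(2) a single Fatou component of $F$ biholomorphic to $\mathbb C\times(\mathbb C^{*})^{d-1}$ and attracted to the origin, with $0\in\partial\Omega_h$ since $F|_{\Omega_h}$ is conjugate to a fixed-point-free translation.

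Finally, determine how $G$ permutes the $\Omega_h$. Since $\mathcal F(G)=\mathcal F(G^{\circ p})=\mathcal F(F)$ and $G$ commutes with $F$, the automorphism $G$ permutes $\{\Omega_0,\dots,\Omega_{k-1}\}$. These basins are distinguished by the $k$ attracting petals, in the directions $\arg\pi\approx 2\pi h/k$, of the parabolic germ $\pi\mapsto\pi-\frac{\pi^{k+1}}{k}+\cdots$ that $F$ induces on $\pi=z^1\cdots z^d$ (cf.\ Remark~\ref{rem:geometryIntro}), while near the origin $G$ acts on $\pi$ by $\pi\mapsto e^{2\pi i/p}\pi\,(1+O(\pi^{k}))$; tracking an orbit lying deep in petal $h$ then shows $G(\Omega_h)=\Omega_{h+k/p\bmod k}$. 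This shift is a product of $k/p$ disjoint $p$-cycles, so the $\Omega_h$ fall into $k/p$ disjoint $p$-periodic cycles of Fatou components of $G$, each component biholomorphic to $\mathbb C\times(\mathbb C^{*})^{d-1}$, non-recurrent and attracted to the origin, as claimed. The algebra above is routine once $p\mid k$ is exploited; I expect the one delicate point to be this last step — identifying the abstractly defined global basins $\Omega_h$ with the concrete petal structure near the origin so as to read off the permutation induced by $G$, and confirming that it is the full $k/p$-fold shift and not something coarser.
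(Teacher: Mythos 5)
Your proposal follows the same overall skeleton as the paper: construct a germ $F_p$ (your $\hat G$) that is a ``$p$-th root'' of the normal form $F_{\rm N}$ up to a prescribed order, globalise via the jet-interpolation Theorem~\ref{thm:JetInterp}, invoke Theorem~\ref{thm:FatouCCstarMulti} for the basins of $G^p$, and then determine the induced permutation of $\{\Omega_0,\dots,\Omega_{k-1}\}$ by watching how $G$ rotates $\pi=z^1\cdots z^d$. The place where you genuinely diverge is in producing the root germ (the paper's Lemma~\ref{lem:RootGerm}). The paper fixes the ansatz $F_1(z)=Mz(1+az^{k\alpha}+bz^{2k\alpha})$, computes $a_m,b_m$ explicitly, tunes $a,b$ so that $F_1^{p}=F_{\rm N}+O(z^{3k\alpha})$, and then invokes the one-resonant normal-form theorem of Bracci--Zaitsev to push the agreement to order $l$. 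You instead observe that, since $p\mid k$ kills the rotation factor $(\mu_1\cdots\mu_d)^k$, the class of germs $z^j\mapsto(\text{scalar})\,z^j\cdot(\text{series in }\pi^k)$ is preserved, so the whole problem collapses to a one-dimensional one on $u=\pi^k$, where the target $u\mapsto u(1-\frac{u}{kd})^{dk}$ is a germ tangent to the identity with a formal $p$-th iterative root; truncate and pull $\gamma$ back. This is cleaner and avoids the appeal to \cite[Theorem~3.6]{BracciZaitsev2013Dynamicsofoneresonantbiholomorphisms}, at the cost of working a little harder to see that matching the $1$-variable iterate to high order really does give $\hat G^{\circ p}(z)=F_{\rm N}(z)+O(\norm z^{l})$ as a $d$-variable germ. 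Your reduction is correct: $\hat G^{\circ p}(z)^j=\lambda_j z^j g_p(\pi^k)$ with $\Gamma^{\circ p}(u)=u\,g_p(u)^{dk}$, so controlling $\Gamma^{\circ p}$ controls $g_p$.

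Two small remarks. First, your final step (reading the permutation off the petal structure) is left as ``the delicate point''; the paper settles it concretely by plugging in the test points $z_r=(r,\dots,\zeta_k^{h}r)\in B_h$ and computing $\pi(G(z_r))\in S_{h+k/p}(R_0,\theta_0)$ for small $r$, which gives $G(B_h)\subseteq B_{h+k/p}$ directly. You should do the same rather than gesture at it, since this is where the claimed cycle length actually gets pinned down. Second, you correctly obtain the shift $h\mapsto h+k/p\pmod k$; this agrees with the paper's own proof (and is the shift for which the cycle decomposition is $k/p$ disjoint $p$-cycles), even though the theorem statement writes $h+p$ --- so your computation is the right one.
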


As an immediate corollary, we obtain automorphisms with cycles of
non-recurrent attracting Fatou components biholomorphic to any product
of copies of $\mathbb{C}$ and $\mathbb{C}^{*}$ with admissible cohomology:
\begin{cor}
\label{cor:CCstarVariants}Let $d,k\in\mathbb{N}^{*}$, $p\in\mathbb{N}^{*}$
divide $k$, and $0\le m<d$. Then there exist holomorphic automorphisms
of $\mathbb{C}^{d}$ possessing $k/p$ disjoint $p$-cycles of non-recurrent,
attracting, invariant Fatou components biholomorphic to $\mathbb{C}^{d-m}\times(\mathbb{C}^{*})^{m}$
and attracted to the origin.
\end{cor}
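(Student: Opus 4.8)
The plan is to derive this from Theorem~\ref{thm:FatouCCstarPeriodic} by a product construction based on the identification $\mathbb{C}^{d-m}\times(\mathbb{C}^{*})^{m}\cong(\mathbb{C}\times(\mathbb{C}^{*})^{m})\times\mathbb{C}^{d-m-1}$. Suppose first that $m\ge1$. Applying Theorem~\ref{thm:FatouCCstarPeriodic} in dimension $m+1\ge2$ with the given $k$ and $p$ produces a global automorphism $G_{1}$ of $\mathbb{C}^{m+1}$ with $k/p$ disjoint $p$-cycles of non-recurrent, attracting Fatou components, each biholomorphic to $\mathbb{C}\times(\mathbb{C}^{*})^{(m+1)-1}=\mathbb{C}\times(\mathbb{C}^{*})^{m}$ and attracted to the origin. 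Let $G_{2}:=\tfrac12\id_{\mathbb{C}^{d-m-1}}$ (or any linear automorphism with spectrum in the open unit disc), so that $G_{2}^{n}\to0$ locally uniformly; then $\mathbb{C}^{d-m-1}$ is the unique Fatou component of $G_{2}$, it contains the origin, and it is attracted to it. Set $G:=G_{1}\times G_{2}$, a global automorphism of $\mathbb{C}^{m+1}\times\mathbb{C}^{d-m-1}=\mathbb{C}^{d}$.

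The central step is to prove $\mathcal{F}(G)=\mathcal{F}(G_{1})\times\mathbb{C}^{d-m-1}$. The inclusion ``$\supseteq$'' is immediate, since a product of two normal families of maps is normal. For ``$\subseteq$'', let $(z,w)$ be such that $z\notin\mathcal{F}(G_{1})$ and assume towards a contradiction that $\{G^{n}\}$ is normal on a product neighbourhood $U_{1}\times U_{2}$ of $(z,w)$. Restricting each $G^{n}$ to the slice $U_{1}\times\{w\}$ and composing with the coordinate projection $\mathbb{C}^{d}\to\mathbb{C}^{m+1}$ would make $\{G_{1}^{n}|_{U_{1}}\}$ normal on the neighbourhood $U_{1}$ of $z$; the only delicate point is that compact divergence survives the projection, and this holds because the $G_{2}$-component of $G^{n}(z',w)=(G_{1}^{n}(z'),G_{2}^{n}(w))$ stays bounded (it tends to $0$), so $G^{n}(z',w)\to\infty$ forces $G_{1}^{n}(z')\to\infty$. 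This contradicts $z\notin\mathcal{F}(G_{1})$, and the identity follows. Consequently the Fatou components of $G$ are exactly the sets $V\times\mathbb{C}^{d-m-1}$ with $V$ a Fatou component of $G_{1}$, since $\mathbb{C}^{d-m-1}$ is connected.

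Each such component $V\times\mathbb{C}^{d-m-1}$ is biholomorphic to $(\mathbb{C}\times(\mathbb{C}^{*})^{m})\times\mathbb{C}^{d-m-1}=\mathbb{C}^{d-m}\times(\mathbb{C}^{*})^{m}$, is attracted to the origin because both factors are, and is non-recurrent: from $0\in\partial V$ in $\mathbb{C}^{m+1}$ we get $(0,0)\in\partial V\times\mathbb{C}^{d-m-1}=\partial(V\times\mathbb{C}^{d-m-1})$. Finally $G(V\times\mathbb{C}^{d-m-1})=G_{1}(V)\times\mathbb{C}^{d-m-1}$, so $G$ permutes these components with precisely the cycle structure $G_{1}$ induces on its Fatou components, namely $k/p$ disjoint $p$-cycles; this proves the corollary for $m\ge1$. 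When $m=0$ the components must be biholomorphic to $\mathbb{C}^{d}$, and $k/p$ disjoint $p$-cycles of such non-recurrent attracting Fatou components around a common fixed point are provided by the higher-dimensional parabolic flowers recalled in the introduction (available for $d\ge2$); crossing such a flower automorphism of $\mathbb{C}^{2}$ with $\tfrac12\id_{\mathbb{C}^{d-2}}$ and rerunning the argument above covers this case.

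The construction is elementary; the only genuine obstacle is the Fatou-set identity of the second paragraph. For an arbitrary product automorphism $\mathcal{F}(G_{1}\times G_{2})$ need not equal $\mathcal{F}(G_{1})\times\mathcal{F}(G_{2})$, and the argument hinges on $G_{2}$ being uniformly contracting on compacta, so that the normality of $\{G^{n}\}$ at any point is controlled entirely by the first factor.
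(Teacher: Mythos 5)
Your proposal follows the paper's own argument: in both cases one takes the automorphism from Theorem~\ref{thm:FatouCCstarPeriodic} in dimension $m+1$, crosses it with the contraction $\tfrac12\id$ on the remaining $d-m-1$ variables, and observes that the Fatou set of the product is the product of the Fatou sets because the second factor converges locally uniformly to $0$. You add two things the paper leaves implicit — a short contradiction argument pinning down why normality of the product forces normality of the first factor, and an explicit treatment of the boundary case $m=0$ (where $m+1=1$ falls outside the range of Theorem~\ref{thm:FatouCCstarPeriodic}, so one must instead quote the type-$\mathbb{C}^{d}$ parabolic flowers mentioned in the introduction) — both of which are sensible refinements rather than a different route.
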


\begin{figure}
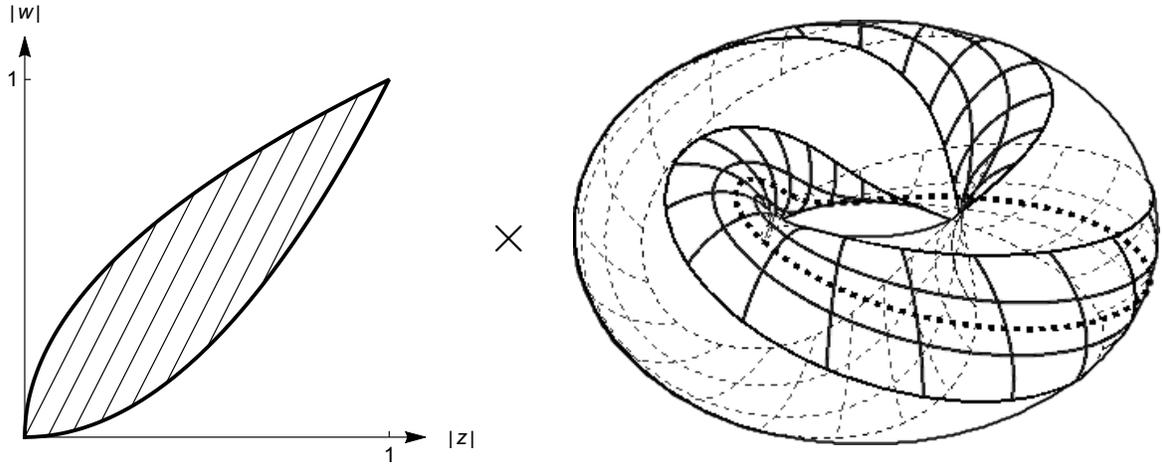

$\begin{array}{c}\includegraphics[width=0.4\columnwidth]{modulus-bw}\end{array}\bigtimes\quad{}\begin{array}{c}\includegraphics[bb=0bp 0bp 263bp 191bp,width=0.5\columnwidth]{argument-bw}\end{array}$

\caption{\label{fig:Argument-components}Decomposition in modulus and argument
components for two local basins $B_{0}$ and $B_{1}$ with central
curve $\arg z+\arg w\equiv0$ in $B_{0}$}
\end{figure}

We also prove an auxiliary result on holomorphic elimination of infinite
families of monomials that may be interesting in its own right. We
use multi index notation $\lambda^{\alpha}=\lambda_{1}^{\alpha_{1}}\cdots\lambda_{d}^{\alpha_{d}}$
for $\alpha=(\alpha_{1},\ldots,\alpha_{d})\in\mathbb{N}^{d}$ and
define the notion of a \emph{Brjuno set} of exponents $A\subseteq\mathbb{N}^{d}$,
by requiring a Brjuno condition only on the small divisors $\lambda^{\alpha}-\lambda_{j}$
with $\alpha\in A$ and $j\in\{1,\ldots,d\}$ (see Definition~\ref{def:BrjunoSet}).
\begin{thm}
\label{thm:IteratedEliminationSimple}Let $F$ be a germ of endomorphisms
of $\mathbb{C}^{d}$ of the form $F(z)=\Lambda z+\sum_{|\alpha|>1}\sum_{j=1}^{d}f_{\alpha}^{j}z^{\alpha}e_{j}$
with $\Lambda=\diag(\lambda_{1},\ldots,\lambda_{d})$. Let $A_{0}$
and $A$ be disjoint sets of multi-indices in $\mathbb{N}^{d}$ such
that $A$ admits a partition $A=A_{1}\cup\cdots\cup A_{k_{0}}$ such
that
\begin{enumerate}
\item \label{enu:itElim1}For $0\le k\le k_{0}$, if $\alpha\in A_{k}$
and $\beta\le\alpha$, then $\beta\in A_{\overline{k}}:=A_{0}\cup\cdots\cup A_{k}$
(where $\le$ is taken component-wise).
\item \label{enu:itElim2}For $1\le k\le k_{0}$, if $\beta_{1},\ldots,\beta_{l}\in A_{0}$
such that $\beta_{1}+\cdots+\beta_{l}\in A_{\overline{k}}$, $|\beta_{1}|\ge2$,
and $f_{\beta_{1}}^{j_{1}}\cdots f_{\beta_{l}}^{j_{l}}\neq0$, then
$e_{j_{1}}+\cdots+e_{j_{l}}\notin A_{k}$.
\item \label{enu:itElim3}$A$ is a Brjuno set for $F$.
\end{enumerate}
Then there exists a local biholomorphism $H\in\Aut(\mathbb{C}^{d},0)$
conjugating $F$ to $G=H^{-1}\circ F\circ H$ where $G(z)=\sum_{|\alpha|>1}g_{\alpha}z^{\alpha}$
with $g_{\alpha}=f_{\alpha}$ for $\alpha\in A_{0}$ and $g_{\alpha}=0$
for $\alpha\in A$.
\end{thm}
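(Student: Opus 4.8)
The plan is to eliminate the monomials with exponents in $A$ by an iterated normal-form procedure, peeling off one layer $A_k$ of the partition at a time, starting from the lowest-order layer. At each stage the conjugating map will be a composition of elementary "shear-type" biholomorphisms of the form $z\mapsto z + h_\alpha z^\alpha e_j$ (or more precisely a time-one-style map cancelling one resonance class), chosen so that $g_\alpha = 0$ for $\alpha$ in the current layer. The Brjuno hypothesis \ref{enu:itElim3} is exactly what makes the infinite composition converge as a germ at the origin: it controls the small divisors $\lambda^\alpha - \lambda_j$ uniformly well enough that the formal transformation has positive radius of convergence, by a Brjuno–Rüssmann type majorant estimate.

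\medskip

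\textbf{First} I would recall the formal elimination step. For a single exponent $\alpha$ with $|\alpha|>1$ and a coordinate $j$, if $\lambda^\alpha \neq \lambda_j$ then conjugating by $z \mapsto z + \tfrac{f_\alpha^j}{\lambda^\alpha - \lambda_j} z^\alpha e_j$ removes the $z^\alpha e_j$ term while only creating new terms of strictly higher total degree, or of the same degree but with exponents that are sums $\beta_1 + \cdots + \beta_l$ of exponents already present in the map and in $F$. Hypotheses \ref{enu:itElim1} and \ref{enu:itElim2} are engineered precisely so this "feedback" stays inside the set we are allowed to kill and never reintroduces a monomial with exponent in $A_0$ (the set we must \emph{preserve}): \ref{enu:itElim1} says $A_{\overline k}$ is a lower set (downward closed under $\le$), so products of lower-order exponents that land in $A_{\overline k}$ are controlled layer by layer; \ref{enu:itElim2} says that whenever such a product of $A_0$-exponents $\beta_1,\ldots,\beta_l$ (with nontrivial composition, $|\beta_1|\ge 2$, and nonzero coefficient) lands in $A_{\overline k}$, the corresponding composed coordinate direction $e_{j_1}+\cdots+e_{j_l}$ is \emph{not} in $A_k$ — so the newly generated term either sits in $A_0$ (hence must be kept, and indeed equals $f_\alpha$ there, unchanged) or sits in some earlier layer $A_{k'}$, $k'<k$, already dealt with, and we check it is not disturbed. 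This bookkeeping — that eliminating layer $A_k$ neither spoils $A_0$ nor re-creates anything in $A_1,\ldots,A_{k-1}$ — is the combinatorial heart of the argument, and I expect it to be the main obstacle: one must set up the induction on a well-ordering of $A$ refining total degree (e.g. degree first, then the partition index, then some fixed order within a degree) so that each elimination is "triangular" with respect to that order.

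\medskip

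\textbf{Next}, having fixed the formal conjugacy $\hat H$ as an infinite composition (or equivalently as a formal power series solving the conjugacy equation $F \circ \hat H = \hat H \circ G$ with the prescribed $g_\alpha$), I would prove convergence. Here I would either invoke a Brjuno-type theorem directly or reproduce the standard majorant argument: write $\hat H = \id + \sum_{\alpha \in A} h_\alpha z^\alpha e_{\,\cdot\,}$, derive from the conjugacy equation the recursion expressing $h_\alpha$ in terms of lower-order $h_\beta$'s and the $f_\beta$'s divided by the small divisor $\lambda^\alpha - \lambda_j$, and dominate $|h_\alpha|$ by the coefficients of a convergent majorant series, using the Brjuno sum condition on $\{\alpha \in A\}$ to absorb the accumulated small-divisor losses $\prod \frac{1}{|\lambda^{\alpha^{(i)}} - \lambda_{j_i}|}$ along binary decomposition trees. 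Since only divisors with $\alpha \in A$ ever appear (we never touch $A_0$, and the directions forced to stay out of $A_k$ by \ref{enu:itElim2} guarantee no extraneous divisors enter), the Brjuno \emph{set} condition \ref{enu:itElim3} — rather than a full Brjuno condition on all of $\mathbb{N}^d$ — suffices. This yields $H = \hat H$ convergent on a neighbourhood of $0$, $H \in \Aut(\mathbb{C}^d,0)$, and $G = H^{-1}\circ F \circ H$ of the asserted form.

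\medskip

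\textbf{Finally} I would note that the statement "$g_\alpha = f_\alpha$ for $\alpha \in A_0$" requires a short separate check: the elimination maps are supported on exponents in $A$, and by \ref{enu:itElim1}–\ref{enu:itElim2} none of the correction terms they generate has exponent in $A_0$, so the $A_0$-part of the Taylor expansion is literally transported unchanged; likewise $g_\alpha$ for $\alpha \notin A_0 \cup A$ is simply left arbitrary, as the statement allows. I would organise the write-up as: (i) the one-step elimination lemma with its degree-increase/feedback description; (ii) the combinatorial induction closing the loop on hypotheses \ref{enu:itElim1}–\ref{enu:itElim2}; (iii) the convergence estimate using \ref{enu:itElim3}. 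Step (ii) is where all the delicacy lies; steps (i) and (iii) are essentially standard Poincaré–Dulac-plus-Brjuno technology adapted to track an infinite but structured family of monomials.
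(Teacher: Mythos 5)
Your plan is, in substance, the paper's: eliminate $A$ while preserving $A_0$, using conditions~(\ref{enu:itElim1})--(\ref{enu:itElim2}) to control the feedback into $A_0$ and the Brjuno-set condition~(\ref{enu:itElim3}) to run a Siegel--Brjuno majorant estimate. The organizational difference is that the paper first proves a standalone \emph{single-layer} elimination theorem (Theorem~\ref{thm:GeneralElimination}), removing all of the current $A$ by \emph{one} conjugation $H$ solving the homological equation $F\circ H=H\circ G$ with $h_\alpha=0$ for $\alpha\notin A$, rather than an infinite composition of shears; Theorem~\ref{thm:IteratedEliminationSimple} then follows by a very short outer induction on $k_0$, whose only content is that after killing $A_1\cup\cdots\cup A_{k_0-1}$ the surviving nonzero coefficients with index in $A_{\overline{k_0-1}}$ automatically lie in $A_0$, so hypothesis~(\ref{enu:itElim2}) at level $k_0$ reduces exactly to hypothesis~(\ref{enu:Elim2}) of the one-layer theorem. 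The ``delicate combinatorial induction'' you flagged as the main obstacle is therefore a few lines once the one-layer case is cleanly factored out; the real weight of the argument is the convergence estimate (Lemmas~\ref{lem:Siegel} and~\ref{lem:Brjuno}), which is precisely the Brjuno machinery you correctly pointed to, adapted so that only divisors indexed by $A$ ever appear.
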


The proof of the theorem is based on that of a partial linearisation
result from \cite{Poeschel1986Oninvariantmanifoldsofcomplexanalyticmappingsnearfixedpoints}
which it generalises.

\subsection*{Outline}

In Section~\ref{sec:LocalConstruction}, following \cite{BracciRaissyStensonesAutomorphismsofmathbbCkwithaninvariantnonrecurrentattractingFatoucomponentbiholomorphictomathbbCtimesmathbbCk1},
we recall results from \cite{BracciZaitsev2013Dynamicsofoneresonantbiholomorphisms}
that show that germs of the form (\ref{eq:FIntro}) have $k$ local
attracting basins of the desired homotopy type. We then examine their
arrangement in the surrounding space.

In Section~\ref{sec:Elimination} we prove Theorem~\ref{thm:IteratedEliminationSimple}
and, under the aforementioned Brjuno-type condition, we conclude the
existence of local coordinates that allow us to better control the
unknown tail of $F$.

In Section~\ref{sec:Stable-Orbits} we use those coordinates to extend
\cite[Lemma~2.5]{BracciRaissyStensonesAutomorphismsofmathbbCkwithaninvariantnonrecurrentattractingFatoucomponentbiholomorphictomathbbCtimesmathbbCk1}
to classify the stable orbits of $F$ near the origin, proving the
first part of Theorem~\ref{thm:FatouCCstarMulti}.

In Section~\ref{sec:Internal-geometry} we define two closely related
systems of coordinates on each local basin compatible with the action
of $F$: the first, in a small variation of \cite[Section~3]{BracciRaissyStensonesAutomorphismsofmathbbCkwithaninvariantnonrecurrentattractingFatoucomponentbiholomorphictomathbbCtimesmathbbCk1},
allows us to study the behaviour of attracting orbits more carefully
in Section~\ref{subsec:InternalOrbitBehaviour}, showing Remark~\ref{rem:OrbitsIntro};
the second in Section~\ref{subsec:Global-Basins} conjugates $F$
to an affine map and, if $F$ is an automorphism, extends to a biholomorphism
from the corresponding global basin to $\mathbb{C}\times(\mathbb{C}^{*})^{d-1}$.
The existence of automorphisms of the form (\ref{eq:FIntro}) follows
directly from a jet-interpolation result, concluding the proof of
the second part of Theorem~\ref{thm:FatouCCstarMulti}.

Finally, we show Theorem~\ref{thm:FatouCCstarPeriodic} and Corollary~\ref{cor:CCstarVariants}
in Section~\ref{sec:Periodic-cycles} via explicit calculations.

\subsection*{Conventions}

$\Aut(\mathbb{C}^{d})$ is the set of biholomorphic automorphisms
of $\mathbb{C}^{d}$ and $\Aut(\mathbb{C}^{d},0)$ the set of germs
of biholomorphisms of $\mathbb{C}^{d}$ at the origin such that $F(0)=0$.

For $z\in\mathbb{C}^{d}$ and $F\in\Aut(\mathbb{C}^{d},0)$, upper
indices denote the components of $z=(z^{1},\ldots,z^{d})$, while
a lower index $n\in\mathbb{N}:=\{0,1,\ldots\}$ denotes the iterated
image $z_{n}=(z_{n}^{1},\ldots,z_{n}^{d}):=F^{n}(z)$ of $z$ under
$F$. Similarly, for the coordinates $u:=\pi(z):=z^{1}\cdots z^{d}$,
$U:=u^{-k}$ we set $u_{n}:=\pi(z_{n})$ and $U_{n}:=u_{n}^{-k}$.

For $\{x_{n}\}_{n\in\mathbb{N}}$ a sequence of objects, we say the
object $x_{n}$ has a property \emph{eventually}, if there exists
$n_{0}\in\mathbb{N}$ such that $x_{n}$ has this property for all
$n>n_{0}$.

For a topological space $D$ and maps $f,g:D\to\mathbb{C}$ , we use
Bachmann-Landau notation for global behaviour:
\begin{itemize}
\item $f(x)=O(g(x))$ for $x\in D$, if $|f(x)|\le C|g(x)|$ for all $x\in D$
for some $C>0$,
\item $f(x)\approx g(x)$ for $x\in D$, if $f(x)=O(g(x))$ and $g(x)=O(f(x))$
(often denoted $f(x)=\Theta(g(x))$),
\end{itemize}
and for asymptotic behaviour:
\begin{itemize}
\item $f(x)=O(g(x))$ as $x\to x_{0}$, if $\limsup_{x\to x_{0}}\frac{|f(x)|}{|g(x)|}=C<+\infty$,
\item $f(x)\approx g(x)$ as $x\to x_{0}$, if $f(x)=O(g(x))$ and $g(x)=O(f(x))$
as $x\to x_{0}$,
\item $f(x)=o(g(x))$ as $x\to x_{0}$, if $\lim_{x\to x_{0}}\frac{|f(x)|}{|g(x)|}=0$,
\item $f(x)\sim g(x)$ as $x\to x_{0}$, if $\lim_{x\to x_{0}}\frac{f(x)}{g(x)}=1$
or $f(x)=g(x)(1+o(1))$ as $x\to x_{0}$.
\end{itemize}

\numberwithin{thm}{section}
\numberwithin{equation}{section}

\section{\label{sec:LocalConstruction}Local basins of attraction}

After recalling a construction of local basins of attraction, we give
their representation in internal holomorphic coordinates to determine
their homotopy type, and in external polar coordinates to visualise
their arrangement in $\mathbb{C}^{d}$.

The local basins arise from the study of local dynamics of one-resonant
germs in \cite{BracciZaitsev2013Dynamicsofoneresonantbiholomorphisms}
by F.~Bracci and D.~Zaitsev.
\begin{defn}
\label{def:oneRes}A germ $F$ of endomorphisms of $\mathbb{C}^{d}$
at the origin such that $F(0)=0$ and $dF_{0}=\diag(\lambda_{1},\ldots,\lambda_{d})$
is called \emph{one-resonant} of index $\alpha=(\alpha_{1},\ldots,\alpha_{d})\in\mathbb{N}^{d}$,
if $\lambda_{j}=\lambda_{1}^{m_{1}}\cdots\lambda_{d}^{m_{d}}$ for
some $j\le d$ and $m=(m_{1},\ldots,m_{d})\in\mathbb{N}^{d}$ if and
only if $m=k\alpha+e_{j}$ for some $k\in\mathbb{N}$ (where $e_{j}$
denotes the $j$-th unit vector).
\end{defn}

\begin{rem}
For $1\le j\le d$, one-resonance of index $\alpha\neq n\cdot e_{j}$
for every $n\in\mathbb{N}$ implies in particular that $\lambda_{j}$
is not a root of unity.
\end{rem}

We start with germs of biholomorphisms of $\mathbb{C}^{d}$ at the
origin in normal form $F_{{\rm N}}$ given for $z=(z^{1},\ldots,z^{d})$
by
\begin{equation}
F_{\mathrm{N}}\paren z=\Lambda z\cdot\paren [\Big]{1-\frac{(z^{1}\cdots z^{d})^{k}}{kd}},\label{eq:FNormalForm}
\end{equation}
that are one-resonant of index $(1,\ldots,1)$ with $\Lambda=\diag(\lambda_{1},\ldots,\lambda_{d})$
such that $|\lambda_{j}|=1$ for each $j\le d$. We will later moreover
assume that proper subsets of $\{\lambda_{1},\ldots,\lambda_{d}\}$
satisfy the Brjuno condition (see Definition~\ref{def:BrjunoAndPart}).

An important tool to study the dynamics of this type of maps introduced
in \cite{BracciZaitsev2013Dynamicsofoneresonantbiholomorphisms} is
the variable $u=\pi(z)=z^{1}\cdots z^{d}$ on which $F_{{\rm N}}$
acts parabolically of order $k$ near the origin as $u\mapsto u(1-u^{k})+O(u^{2k+1})$,
yielding a Leau-Fatou flower of $k$ attracting sectors
\[
S_{h}(R,\theta):=\pbrace [\big]{u\in\mathbb{C}\mid{\abs [\big]{u^{k}-\tfrac{1}{2R}}}<\tfrac{1}{2R},{\abs [\big]{\arg(u)-\tfrac{2\pi h}{k}}}<\theta}
\]
for $h=0,\ldots,k-1$ and suitable $R>0$ and $\theta\in(0,\nicefrac{\pi}{2k})$.
Note that on each such sector the map $u\mapsto u^{-k}=:U$ is injective,
hence each sector is biholomorphic to a ``sector at infinity''
\[
H(R,\theta):=\{U\in\mathbb{C}\mid\Re U>R,\abs{\arg(U)}<k\theta\}.
\]
To control $z$ in terms of $u=\pi(z)$, for $\beta\in(0,1/d)$ let
further
\[
W(\beta):=\{z\in\mathbb{C}^{d}\mid|z^{j}|<|\pi(z)|^{\beta}\text{ for }j\le d\},
\]
and for $h=0,\ldots,k-1$
\[
B_{h}(R,\theta,\beta):=\{z\in W(\beta)\mid\pi(z)\in S_{h}(R,\theta)\}.
\]
Now from the proof of \cite[Theorem~1.1]{BracciZaitsev2013Dynamicsofoneresonantbiholomorphisms}
it follows:
\begin{thm}
\label{thm:BZ}Let $F_{{\rm N}}$ be of the form (\ref{eq:FNormalForm})
and $l\in\mathbb{N}$, $l>2kd+1$. Then for every germ $F$ of automorphisms
of $\mathbb{C}^{d}$ at the origin of the form
\begin{equation}
F(z)=F_{{\rm N}}(z)+O(\norm z^{l}),\label{eq:BZgerms}
\end{equation}
for every $\beta_{0}\in(0,1/d)$ such that $\beta_{0}(l+d-1)>2k+1$,
and every $\theta_{0}\in(0,\nicefrac{\pi}{2k})$, there exists $R_{0}>0$
such that the (disjoint, non-empty) open sets $B_{h}:=B_{h}(R_{0},\theta_{0},\beta_{0})$
for $h=0,\ldots,k-1$ are uniform local basins of attraction for $F$,
that is $F(B_{h})\subseteq B_{h}$, and $\lim_{n\to\infty}F^{n}\equiv0$
uniformly in $B_{h}$ for each $h$.
\end{thm}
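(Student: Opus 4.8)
The plan is to follow an orbit $z_{n}=F^{n}(z)$ through the two data that cut out $B_{h}$: the base coordinate $u_{n}=\pi(z_{n})=z_{n}^{1}\cdots z_{n}^{d}$ and the transverse ratios $|z_{n}^{j}|/|u_{n}|^{\beta_{0}}$, $j=1,\ldots,d$. Since each coordinate of $F_{\mathrm N}$ equals $\lambda_{j}z^{j}(1-\tfrac{u^{k}}{kd})$ and $\lambda_{1}\cdots\lambda_{d}=1$, expanding the $d$-th power gives at once
\[
\pi\bigl(F_{\mathrm N}(z)\bigr)=(\lambda_{1}\cdots\lambda_{d})\,u\,\Bigl(1-\tfrac{u^{k}}{kd}\Bigr)^{d}=u-\tfrac{1}{k}u^{k+1}+O(u^{2k+1}),
\]
so on the base $F_{\mathrm N}$ acts parabolically of order $k$; in the coordinate $U=u^{-k}$ this reads $U_{n+1}=U_{n}+1+o(1)$ as $U_{n}\to\infty$, for which the sector at infinity $H(R_{0},\theta_{0})$ --- equivalently the petal $S_{h}(R_{0},\theta_{0})$ --- is forward invariant with $\Re U_{n}\to\infty$, i.e.\ $u_{n}\to0$, by the classical Leau-Fatou theory, once $R_{0}$ is taken large. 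The whole point is to check that the tail $O(\norm{z}^{l})$ does not spoil this on $W(\beta_{0})$.

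To that end I would record two elementary facts about $z\in W(\beta_{0})$: first $\norm{z}^{l}=O(|u|^{\beta_{0}l})$; second --- and here the hypothesis $\beta_{0}(l+d-1)>2k+1$ enters --- from $|u|=\prod_{i}|z^{i}|<|z^{j}|\,|u|^{(d-1)\beta_{0}}$ one gets $|z^{j}|>|u|^{1-(d-1)\beta_{0}}$, so that the tail's contribution to $\pi(F(z))=\prod_{j}F(z)^{j}$, coming from terms in which at least one factor $\lambda_{j}z^{j}(1-\tfrac{u^{k}}{kd})$ is replaced by an $O(\norm{z}^{l})$ term, has size $O\bigl(|u|^{\beta_{0}l}\,|u|^{(d-1)\beta_{0}}\bigr)=O(|u|^{\beta_{0}(l+d-1)})=o(|u|^{2k+1})$. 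Hence, as long as the orbit stays in $W(\beta_{0})$, one has $u_{n+1}=u_{n}-\tfrac{1}{k}u_{n}^{k+1}+O(u_{n}^{2k+1})$ with uniform $O$, and the tail's relative effect on each coordinate, $\norm{z_{n}}^{l}/|z_{n}^{j}|=O(|u_{n}|^{\beta_{0}(l+d-1)-1})$, is $o(|u_{n}|^{k})$. These are the only estimates I will need.

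The core of the argument will be a simultaneous induction showing $z_{n}\in B_{h}\Rightarrow z_{n+1}\in B_{h}$, where $B_{h}=\{z\in W(\beta_{0}):\pi(z)\in S_{h}(R_{0},\theta_{0})\}$. For the base coordinate this is just the petal invariance above. For the transverse ratios the decisive step is to compare them \emph{multiplicatively}: with $w_{n}:=1-\tfrac{u_{n}^{k}}{kd}$ one has $z_{n+1}^{j}=\lambda_{j}z_{n}^{j}w_{n}+O(\norm{z_{n}}^{l})$ and $u_{n+1}=u_{n}w_{n}^{d}+O(|u_{n}|^{\beta_{0}(l+d-1)})$, so the estimates above give
\[
\frac{|z_{n+1}^{j}|}{|u_{n+1}|^{\beta_{0}}}\le\frac{|z_{n}^{j}|}{|u_{n}|^{\beta_{0}}}\,|w_{n}|^{\,1-\beta_{0}d}\bigl(1+o(|u_{n}|^{k})\bigr).
\]
Since $z_{n}\in S_{h}$ forces $\arg(u_{n}^{k})\in(-k\theta_{0},k\theta_{0})$ with $k\theta_{0}<\tfrac{\pi}{2}$, we have $\Re(u_{n}^{k})\ge\cos(k\theta_{0})|u_{n}|^{k}>0$, hence $|w_{n}|\le1-c\,|u_{n}|^{k}$ for a fixed $c>0$; and since $1-\beta_{0}d>0$ the factor $|w_{n}|^{1-\beta_{0}d}(1+o(|u_{n}|^{k}))$ is $\le1-c'|u_{n}|^{k}<1$ once $R_{0}$ is large. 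Thus each transverse ratio is strictly decreasing along the orbit and stays $<1$, so $z_{n+1}\in W(\beta_{0})$ and hence $z_{n+1}\in B_{h}$; by induction $F(B_{h})\subseteq B_{h}$. Finally the $B_{h}$ are pairwise disjoint (the petals $S_{h}(R_{0},\theta_{0})$ are, for $\theta_{0}<\tfrac{\pi}{2k}$), non-empty (as $(u^{1/d},\ldots,u^{1/d})\in B_{h}$ for $u\in S_{h}$ with $|u|<1$), and open.

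For the uniform convergence, the Leau-Fatou estimates on the bounded petal $S_{h}(R_{0},\theta_{0})$ are uniform, so $\sup_{z\in B_{h}}|u_{n}|\to0$; since $\norm{z_{n}}\le C_{d}\,|u_{n}|^{\beta_{0}}$ on $B_{h}$ for a dimensional constant $C_{d}$, it follows that $\sup_{z\in B_{h}}\norm{F^{n}(z)}\le C_{d}\sup_{z\in B_{h}}|u_{n}|^{\beta_{0}}\to0$. The main obstacle is the transverse estimate: the naive additive bound $|z_{n+1}^{j}|\le|z_{n}^{j}|(1-c|u_{n}|^{k})+C|u_{n}|^{\beta_{0}l}$ is too weak, because near the boundary of the petal the modulus $|u_{n}|$ can decay more slowly than $|z_{n}^{j}|$; one really needs to exploit that the common factor $w_{n}=1-\tfrac{u_{n}^{k}}{kd}$ enters $z^{j}$ to the first power but $u$ to the $d$-th, which produces the crucial exponent $1-\beta_{0}d>0$. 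All of this is carried out in the proof of \cite[Theorem~1.1]{BracciZaitsev2013Dynamicsofoneresonantbiholomorphisms}, and the statement above is just its uniform-in-$F$ version.
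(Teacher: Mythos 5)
Your argument is correct and follows the same route as the paper, which simply invokes the proof of \cite[Theorem~1.1]{BracciZaitsev2013Dynamicsofoneresonantbiholomorphisms}: parabolic Leau--Fatou dynamics of the base coordinate $u=\pi(z)$ in the chart $U=u^{-k}$, plus a multiplicative comparison of the transverse ratios $|z^{j}|/|u|^{\beta_{0}}$ exploiting the lower bound $|z^{j}|>|u|^{1-(d-1)\beta_{0}}$ on $W(\beta_{0})$ and the exponent $1-\beta_{0}d>0$. The hypotheses $\beta_{0}(l+d-1)>2k+1$ and $\beta_{0}<1/d$ enter exactly where they should, and the uniformity follows as you say from the uniform Abel estimate $\Re U_{n}\ge R_{0}+n(1-\varepsilon)$ on the sector.
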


\begin{rem}
\label{rem:HomotopyType}As in \cite[Lemma~2.7 and Section~7]{BracciRaissyStensonesAutomorphismsofmathbbCkwithaninvariantnonrecurrentattractingFatoucomponentbiholomorphictomathbbCtimesmathbbCk1},
we observe that each local basin $B_{h}$ is homotopy equivalent to
$(S^{1})^{d-1}$, so the local basins have the desired homotopy type
(of $\mathbb{C}\times(\mathbb{C}^{*})^{d-1}$).

To see this, let again $u=z^{1}\cdots z^{d}$. Then $(u,z')=(u,z^{2},\ldots,z^{d})$
is a holomorphic system of coordinates on $B_{h}$ through which $B_{h}$
is biholomorphic to
\begin{multline*}
\{(u,z')\in(\mathbb{C}^{*})^{d}\mid u\in S_{h}(R_{0},\theta_{0}),|u|^{1-\beta_{0}}<|z^{2}\cdots z^{d}|,|z^{j}|<|u|^{\beta_{0}}\text{ for }j\ge2\}\\
=\{u\in S_{h}(R_{0},\theta_{0}),|u|^{1-(d-j+1)\beta_{0}}|z^{2}\cdots z^{j-1}|^{-1}<|z^{j}|<|u|^{\beta_{0}}\text{ for }j\ge2\}.
\end{multline*}
Since the sector $S_{h}(R_{0},\theta_{0})$ is contractible, and
for each $j$ given $u,z^{2},\ldots,z^{j}$, the value of $z^{j+1}$
is confined to an annulus, this is homotopy equivalent to $(S^{1})^{d-1}$.
\end{rem}

\begin{rem}[Shape and arrangement]
\label{rem:externalGeom}The external shape of the local basins becomes
apparent in polar coordinates. For the sake of visualisation and simplicity,
let $d=2$ and assume $0<R<1$. To get a global (real) smooth argument
coordinate, we consider $\arg$ to take values in $S^{1}=\mathbb{R}/(2\pi\mathbb{Z})$,
so $(\arg(z),\arg(w))$ is a point on the torus $\mathbb{T}^{2}=S^{1}\times S^{1}$.
For $0<R<1$ the condition ${\abs [\big]{u^{k}-\frac{1}{2R}}}<\frac{1}{2R}$
is implied by the others and so for $h\in\{0,\ldots,k-1\}$, we have
\[
B_{h}=\pbrace{(z,w)\in W(\beta)\mid{{\abs{\arg(z)+\arg(w)-2\pi h/k}}}<\theta}.
\]
In this case $B_{h}$ is diffeomorphic via polar coordinates $(|z|,|w|,\arg(z),\arg(w))$
to the product
\[
\{(r_{1},r_{2})\in\mathbb{R}_{+}^{2}\mid r_{1}^{\frac{1-\beta}{\beta}}<r_{2}<r_{1}^{\frac{\beta}{1-\beta}}\}\times\{(s,t)\in\mathbb{T}^{2}\mid d_{S^{1}}(s+t,2\pi h/k)<\theta\}\subseteq\mathbb{R}_{+}^{2}\times\mathbb{T}^{2}
\]
shown in Figure~\ref{fig:Argument-components}. The modulus component
is simply connected and identical for all basins and the argument
component of $B_{h}$ is a $\theta$-neighbourhood of the central
curve $s+t\equiv2\pi h/k$, that is a ``ribbon'' winding around
the torus $\mathbb{T}^{2}$.

For $d>2$, the modulus component $W(\beta)\cap\mathbb{R}_{+}^{d}$
is still simply connected and the argument component of $B_{h}$ is
a $\theta$-neighbourhood of a central hypersurface in $\mathbb{T}^{d}$
given by $s_{1}+\cdots+s_{d}\equiv2\pi h/k$. For general $R>0$,
the basins are truncated, but remain the same near the origin and
preserve their homotopy type.
\end{rem}

\section{\label{sec:Elimination}Elimination of terms}

In this section, we will prove that under a Brjuno-type condition
we can holomorphically eliminate infinite families of monomials even
in the presence of resonances. We will then apply this to germs of
the form (\ref{eq:BZgerms}) to simplify the unknown tail.

Throughout this section, we will use multi-index notation:
\begin{notation}
Let $\alpha=(\alpha_{1},\ldots,\alpha_{d})\in\mathbb{N}^{d}$ be a
multi-index and $z=(z^{1},\ldots,z^{d})\in\mathbb{C}^{d}$. Then we
set $z^{\alpha}:=(z^{1})^{\alpha_{1}}\cdots(z^{d})^{\alpha_{d}}$
and $|\alpha|:=\alpha_{1}+\cdots+\alpha_{d}$. For $\alpha,\beta\in\mathbb{N}^{d}$,
we write $\alpha\le\beta$ if $\alpha_{j}\le\beta_{j}$ for $j=1,\ldots,d$.
\end{notation}

We first introduce the notion of a Brjuno set of exponents:
\begin{defn}
\label{def:BrjunoSet}Let $F$ be a germ of endomorphisms of $\mathbb{C}^{d}$
with
\[
dF_{0}=\Lambda=\diag(\lambda_{1},\ldots,\lambda_{d}).
\]
A set $A\subseteq\mathbb{N}^{d}$ is a \emph{Brjuno set} (\emph{of
exponents}) for $(\lambda_{1},\ldots,\lambda_{d})$ (or for $F$),
if
\begin{equation}
\sum_{k\ge1}2^{-k}\log\omega_{A}^{-1}(2^{k})<\infty,\label{eq:BrjunoSet}
\end{equation}
where
\begin{equation}
\omega_{A}(k):=\min\{|\lambda^{\alpha}-\lambda_{i}|\mid\alpha\in A,2\le|\alpha|\le k,1\le i\le d\}\cup\{1\}\label{eq:BrjunoFunction}
\end{equation}
for $k\ge2$.
\end{defn}

\begin{rem}
Subsets and finite unions of Brjuno sets are Brjuno sets.
\end{rem}

This definition includes the classical Brjuno condition from \cite{Brjuno1973Analyticalformofdifferentialequations}
and the partial Brjuno condition from \cite{Poeschel1986Oninvariantmanifoldsofcomplexanalyticmappingsnearfixedpoints}
as follows:
\begin{defn}
\label{def:BrjunoAndPart}Let $\lambda_{1},\ldots,\lambda_{d}\in\mathbb{C}$.
\begin{enumerate}
\item $\{\lambda_{1},\ldots,\lambda_{d}\}$ satisfies the \emph{Brjuno condition},
if $A=\mathbb{N}^{d}$ is a Brjuno set for $(\lambda_{1},\ldots,\lambda_{d})$.
\item $L\subseteq\{\lambda_{1},\ldots,\lambda_{d}\}$ satisfies the \emph{partial
Brjuno condition} (wrt. $(\lambda_{1},\ldots,\lambda_{d})$), if $A=\{\alpha\in\mathbb{N}^{d}\mid\alpha_{j}=0\text{ for }\lambda_{j}\notin L\}$
is a Brjuno set for $(\lambda_{1},\ldots,\lambda_{d})$.
\end{enumerate}
\end{defn}

\cite{Brjuno1973Analyticalformofdifferentialequations} and \cite{Poeschel1986Oninvariantmanifoldsofcomplexanalyticmappingsnearfixedpoints}
prove full and partial analytic linearisability on submanifolds tangent
to the union of the eigenspaces of the multipliers that satisfy the
respective condition. The following theorem generalises these results
in the context of eliminating infinite families of monomials. A different
generalisation aiming at full linearisation in the presence of resonances
has been explored in \cite{Raissy2011Brjunoconditionsforlinearizationinpresenceofresonances}.
\begin{thm}
\label{thm:GeneralElimination}Let $F$ be a germ of endomorphisms
of $\mathbb{C}^{d}$ of the form  $F(z)=\Lambda z+\sum_{|\alpha|>1}\sum_{j=1}^{d}f_{\alpha}^{j}z^{\alpha}e_{j}$
with $\Lambda=\diag(\lambda_{1},\ldots,\lambda_{d})$. Let $A_{0}$
and $A$ be disjoint sets of multi-indices in $\mathbb{N}^{d}$ such
that
\begin{enumerate}
\item \label{enu:Elim1}If $\alpha\in A_{0}$ and $\beta\le\alpha$, then
$\beta\in A_{0}$, and if $\alpha\in A$ and $\beta\le\alpha$, then
$\beta\in A_{0}\cup A$.
\item \label{enu:Elim2}If $\beta_{1},\ldots,\beta_{l}\in A_{0}$ and $\beta_{1}+\cdots+\beta_{l}\in A_{0}\cup A$,
$|\beta_{1}|\ge2$ and $f_{\beta_{1}}^{j_{1}}\cdots f_{\beta_{l}}^{j_{l}}\neq0$,
then $e_{j_{1}}+\cdots+e_{j_{l}}\notin A$.
\item \label{enu:Elim3Brjuno}$A$ is a Brjuno set for $F$.
\end{enumerate}
Then there exists a local biholomorphism $H\in\Aut(\mathbb{C}^{d},0)$
conjugating $F$ to $G=H^{-1}\circ F\circ H$ such that $G(z)=\sum g_{\alpha}z^{\alpha}$
with $g_{\alpha}=f_{\alpha}$ for $\alpha\in A_{0}$ and $g_{\alpha}=0$
for $\alpha\in A$.
\end{thm}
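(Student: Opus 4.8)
The plan is to follow Pöschel's majorant method for partial linearisation, as suggested in the text, but adapted so that the ``normal form'' we aim for retains the monomials indexed by $A_0$ instead of killing them. We seek $H = \id + \hat H$ with $\hat H$ supported only on monomials $z^\alpha e_j$ with $\alpha \in A$, and we want $G = H^{-1}\circ F\circ H$ to have $g_\alpha = f_\alpha$ for $\alpha \in A_0$ and $g_\alpha = 0$ for $\alpha \in A$. The conjugacy equation $F\circ H = H\circ G$ expands, comparing the coefficient of $z^\alpha e_j$ for $\alpha \in A$, into a relation of the schematic form
\[
(\lambda^\alpha - \lambda_j)\,\hat H_\alpha^j = (\text{polynomial in } f,\ \hat H_\beta \text{ with } \beta < \alpha,\ g_\beta \text{ with } \beta \text{ of lower order}),
\]
so that the small divisor $\lambda^\alpha - \lambda_j$ is inverted exactly on the index set $A$ where the Brjuno condition (hypothesis~\ref{enu:Elim3Brjuno}) controls it. For $\alpha \in A_0$ we simply \emph{define} $g_\alpha = f_\alpha$ and put $\hat H_\alpha = 0$; for $\alpha \notin A_0 \cup A$ the corresponding coefficient of $G$ is whatever the recursion forces, with $\hat H_\alpha = 0$ there too.

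First I would set up the formal recursion carefully and check it is consistent: the key structural point is that hypotheses~\ref{enu:Elim1} and~\ref{enu:Elim2} guarantee that the right-hand side of the equation for $\hat H_\alpha^j$, $\alpha \in A$, never actually requires knowing $\hat H_\beta$ or $g_\beta$ for $\beta$ outside the range already determined, and — crucially — that no term $g_\beta$ with $\beta \in A$ (which we want to be zero) is ever \emph{produced} on a right-hand side that should vanish. Concretely, hypothesis~\ref{enu:Elim1} (the ``lower-set'' conditions on $A_0$ and on $A_0\cup A$) ensures that when we expand $F(H(z))$, every monomial $z^\gamma$ appearing with $\gamma \in A$ decomposes as sums $\beta_1 + \cdots + \beta_l$ of exponents from $A_0$ (coming from $F$) plus shifts from $\hat H$; and hypothesis~\ref{enu:Elim2} is exactly the combinatorial condition ensuring that such a decomposition cannot feed a nonzero contribution into the $e_{j_1}+\cdots+e_{j_l}$-component when that index lies in $A$. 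This is where I expect to spend the most care: organising the induction (presumably on $|\alpha|$, with a secondary ordering, mirroring Pöschel) so that the mutual dependence of $\hat H$ and $g$ is well-founded and the ``$g_\alpha = 0$ on $A$'' and ``$g_\alpha = f_\alpha$ on $A_0$'' prescriptions are simultaneously consistent.

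Next I would establish convergence of the formal series $\hat H$. Here the argument is the standard Pöschel/Brjuno majorant estimate: one builds a majorant series and tracks the accumulation of small-divisor factors $\omega_A(k)^{-1}$; because every inverted divisor has its exponent in $A$, only the Brjuno sum $\sum_k 2^{-k}\log\omega_A^{-1}(2^k)$ enters, and hypothesis~\ref{enu:Elim3Brjuno} makes it finite, yielding a positive radius of convergence for $H$. I would reuse the quantitative lemmas from \cite{Poeschel1986Oninvariantmanifoldsofcomplexanalyticmappingsnearfixedpoints} essentially verbatim, noting only that the extra ``kept'' terms $f_\alpha$, $\alpha\in A_0$, enter the majorant just like any other analytic data of $F$ and do not worsen the estimate. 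Finally, since $H = \id + (\text{higher order})$ it is a local biholomorphism, $G = H^{-1}\circ F \circ H$ is a well-defined germ, and by construction its Taylor coefficients satisfy $g_\alpha = f_\alpha$ on $A_0$ and $g_\alpha = 0$ on $A$, completing the proof. The main obstacle, to reiterate, is purely bookkeeping: verifying that hypotheses~\ref{enu:Elim1}--\ref{enu:Elim2} make the coupled recursion for $(\hat H, g)$ both solvable and compatible with the desired vanishing pattern; the analytic (Brjuno) part is then a direct adaptation of known estimates.
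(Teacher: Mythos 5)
Your overall strategy---solving the homological equation $F\circ H = H\circ G$ with $H = \id + (\text{terms supported on }A)$, using hypotheses~(\ref{enu:Elim1}) and~(\ref{enu:Elim2}) to keep the recursion consistent, and closing with a Brjuno-type majorant---is exactly the paper's proof, so the plan is sound. Two points in your sketch, however, are imprecise in ways that matter, and the paper explicitly addresses one of them.

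First, you claim you would reuse P\"oschel's quantitative lemmas ``essentially verbatim''. The paper notes this does not quite work: P\"oschel reduces to the case $\min_j|\lambda_j|\le 1$ by passing to $F^{-1}$ where necessary, whereas hypothesis~(\ref{enu:Elim2}) is a condition on the coefficients of $F$ itself and is \emph{not} preserved under $F\mapsto F^{-1}$. The paper therefore has to re-choose the constants in the Siegel-type lemma so the small-divisor argument runs without that reduction. Second, the remark that the retained coefficients $f_\alpha$, $\alpha\in A_0$, ``enter the majorant just like any other analytic data and do not worsen the estimate'' misidentifies the mechanism. In the recursion these kept coefficients reappear as the factors $g_{\beta_i}=f_{\beta_i}$ inside the term $h_{e_J}\,g_{\beta_1}^{j_1}\cdots g_{\beta_k}^{j_k}$, and the clean majorant inequality $\lVert h_\alpha\rVert_1\le d\,\varepsilon_\alpha^{-1}\sum\lVert h_{\beta_1}\rVert_1\cdots\lVert h_{\beta_k}\rVert_1$ is available only because hypothesis~(\ref{enu:Elim2}), combined with the downward-closure in~(\ref{enu:Elim1}) and the induction giving $g_\beta=f_\beta$ on $A_0$, forces that \emph{entire} second term to vanish for every $\alpha\in A$, not just for $\alpha\in A_0$. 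The kept terms are not merely harmless; they must be, and are, removed from the $A$-recursion altogether. These two points are exactly where a naive port of P\"oschel's argument would break, so you cannot defer them as routine bookkeeping.
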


For $A_{0}=\{|\alpha|\le1\}$ and $A=\mathbb{N}^{d}\backslash A_{0}$
or $A=(\mathbb{N}^{m}\times\{0\})\backslash A_{0}$, we recover the
results from \cite{Brjuno1973Analyticalformofdifferentialequations}
and \cite{Poeschel1986Oninvariantmanifoldsofcomplexanalyticmappingsnearfixedpoints}.
A novelty of phrasing the result in this way is that it can be iterated
to obtain Theorem~\ref{thm:IteratedEliminationSimple}.
\begin{rem}
If we assume $\sum_{\alpha\in A_{0}}f_{\alpha}z^{\alpha}$ to be in
Poincaré-Dulac normal form, the condition $f_{\beta_{1}}^{j_{1}}\cdots f_{\beta_{l}}^{j_{l}}\neq0$
can be replaced by $\lambda^{\beta_{m}}\neq\lambda_{j_{m}}$ for $1\le m\le l$
to avoid dependence of Condition~(\ref{enu:Elim2}) on the specific
germ $F$.
\end{rem}

The proof of Theorem~\ref{thm:GeneralElimination} emerges largely
by careful examination of that in \cite{Poeschel1986Oninvariantmanifoldsofcomplexanalyticmappingsnearfixedpoints}
with some adjustments to avoid the assumption $\min_{1\le j\le d}|\lambda_{j}|\le1$
in the proofs of Lemmas~\ref{lem:Siegel} and \ref{lem:Brjuno}.
In \cite{Poeschel1986Oninvariantmanifoldsofcomplexanalyticmappingsnearfixedpoints}
this is ensured by considering $F^{-1}$ if necessary, but Condition~(\ref{enu:Elim2})
in our theorem is not invariant under taking inverses.
\begin{proof}
Formal series
\begin{align*}
G(z) & =\Lambda z+g(z)=\sum_{|\alpha|\ge1}g_{\alpha}z^{\alpha}\quad\text{and}\quad H(z)=z+h(z)=\sum_{|\alpha|\ge1}h_{\alpha}z^{\alpha}
\end{align*}
of the required form emerge as solutions to the homological equation
$F\circ H=H\circ G$. Comparing coefficients for $\alpha\in\mathbb{N}^{d}\backslash\{0\}$,
this means
\begin{equation}
(\lambda^{\alpha}\id-\Lambda)h_{\alpha}=f_{\alpha}-g_{\alpha}+\sum_{2\le k<|\alpha|}\sum_{j_{1}\le\cdots\le j_{k}}\sum_{\beta_{1}+\cdots+\beta_{k}=\alpha}(f_{e_{J}}h_{\beta_{1}}^{j_{1}}\cdots h_{\beta_{k}}^{j_{k}}-h_{e_{J}}g_{\beta_{1}}^{j_{1}}\cdots g_{\beta_{k}}^{j_{k}}),\label{eq:HomolExplicit}
\end{equation}
where $e_{J}:=e_{j_{1}}+\cdots+e_{j_{k}}$. Take $h_{\alpha}=0$ for
$\alpha\notin A$. Then for $\alpha\in A_{0}$, the first term in
the sum vanishes by Condition~(\ref{enu:Elim1}) and the second term
vanishes by Condition~(\ref{enu:Elim2}), so $g_{\alpha}=f_{\alpha}$.
For $\alpha\in A$, $\lambda^{\alpha}\id-\Lambda$ is invertible by
Condition~(\ref{enu:itElim3}) and the right hand side depends only
on $h$-terms with index of order less than $|\alpha|$. Hence (\ref{eq:HomolExplicit})
determines $h_{\alpha}$ uniquely by recursion and we obtain a formal
solution $H$ and hence $G=H^{-1}\circ F\circ H$.

To show that $H$ (and hence $G$) converges in some neighbourhood
of the origin, we have to show
\begin{equation}
\sup_{\alpha\in\mathbb{N}^{d}}\frac{1}{|\alpha|}\log\norm{h_{\alpha}}_{1}<\infty.\label{eq:convOfConjugation}
\end{equation}
We apply the majorant method first used by C.~L.~Siegel in \cite{Siegel1942Iterationofanalyticfunctions}
and improved in \cite{Brjuno1973Analyticalformofdifferentialequations}.
We may assume (up to scaling of variables) that $\norm{f_{\alpha}}_{1}\le1$
for all $|\alpha|\ge2$. Now for $\alpha\in A$ again by Condition~(\ref{enu:Elim2}),
the second term in the sum in (\ref{eq:HomolExplicit}) vanishes and
it follows
\begin{equation}
\norm{h_{\alpha}}_{1}\le d\cdot\norm{h_{\alpha}}_{\infty}\le d\cdot\varepsilon_{\alpha}^{-1}\sum_{2\le k\le|\alpha|}\sum_{\beta_{1}+\cdots+\beta_{k}=\alpha}\norm{h_{\beta_{1}}}_{1}\cdots\norm{h_{\beta_{k}}}_{1},\label{eq:HomolTermEstimate}
\end{equation}
where $\varepsilon_{\alpha}:=\min_{1\le i\le d}|\lambda^{\alpha}-\lambda_{i}|$.
We estimate (\ref{eq:HomolTermEstimate}) in two parts, one on the
number of summands, the other on their size. We define recursively
$\sigma_{1}=1$ and
\begin{equation}
\sigma_{r}:=d\sum_{k=2}^{r}\sum_{r_{1}+\cdots+r_{k}=r}\sigma_{r_{1}}\cdots\sigma_{r_{\nu}}\quad\text{for }r\ge2,\label{eq:SigmaForTermNumber}
\end{equation}
and $\delta_{e_{1}}=\cdots=\delta_{e_{d}}=1$, $\delta_{\alpha}=0$
for $\alpha\notin A\cup\{e_{1},\ldots,e_{d}\}$, and
\begin{align}
\delta_{\alpha} & \coloneqq\varepsilon_{\alpha}^{-1}\max_{\substack{\beta_{1}+\cdots+\beta_{k}=\alpha\\
k\ge2
}
}\delta_{\beta_{1}}\cdots\delta_{\beta_{k}}\quad\text{for }\alpha\in A.\label{eq:deltaForDivisors}
\end{align}
Then, by induction on $|\alpha|$, (\ref{eq:HomolTermEstimate}) implies
\begin{equation}
\norm{h_{\alpha}}_{1}\le\sigma_{|\alpha|}\delta_{\alpha}\label{eq:BoundHbySigmaDelta}
\end{equation}
for $\alpha\in A$. Hence to prove (\ref{eq:convOfConjugation}) it
suffices to prove estimates of the same type for $\sigma_{|\alpha|}$
and $\delta_{\alpha}$.

The estimates on $\sigma_{r}$ go back to \cite{Siegel1942Iterationofanalyticfunctions}
and \cite{Sternberg1961InfiniteLiegroupsandtheformalaspectsofdynamicalsystems}:
Let $\sigma(t):=\sum_{r=1}^{\infty}\sigma_{r}t^{r}$ and observe
\begin{align*}
\sigma(t) & =t+\sum_{r=2}^{\infty}\paren[{\Big}]{d\sum_{k=2}^{r}\sum_{r_{1}+\cdots+r_{k}=r}\sigma_{r_{1}}\cdots\sigma_{r_{k}}}t^{r}\\
 & =t+d\sum_{k=2}^{\infty}\paren[{\Big}]{\sum_{r=1}^{\infty}\sigma_{r}t^{r}}^{k}\\
 & =t+d\frac{\sigma(t)^{2}}{1-\sigma(t)}.
\end{align*}
Solving for $t$ and requiring $\sigma(0)=0$ yields a unique holomorphic
solution
\[
\sigma(t)=\frac{1+t-\sqrt{(1+t)^{2}-4(d+1)t}}{2(d+1)}
\]
for small $t$, so $\sigma$ converges near $0$ and we have
\begin{equation}
\sup_{r\ge1}\frac{1}{r}\log\sigma_{r}<\infty.\label{eq:BoundSigmaR}
\end{equation}

The estimates on $\delta_{\alpha}$ take care of the small divisors
$\varepsilon_{\alpha}$ and proceed essentially like \cite{Brjuno1973Analyticalformofdifferentialequations}.
For every $|\alpha|\ge2$, we choose a maximising decomposition $\beta_{1}+\cdots+\beta_{k}=\alpha$
in (\ref{eq:deltaForDivisors}) such that
\begin{equation}
\delta_{\alpha}=\varepsilon_{\alpha}^{-1}\delta_{\beta_{1}}\cdots\delta_{\beta_{k}}\label{eq:ChosenDecompDelta}
\end{equation}
and $|\alpha|>|\beta_{1}|\ge\cdots\ge|\beta_{k}|\ge1$. In this way,
starting from $\delta_{\alpha}$ we proceed to decompose $\delta_{\beta_{1}},\ldots,\delta_{\beta_{k}}$
in the same way and continue the process until we arrive at a well-defined
decomposition of the form
\begin{equation}
\delta_{\alpha}=\varepsilon_{\alpha_{0}}^{-1}\varepsilon_{\alpha_{1}}^{-1}\cdots\varepsilon_{\alpha_{s}}^{-1},\label{eq:DeltaAsAllEps}
\end{equation}
where $2\le|\alpha_{s}|,\ldots,|\alpha_{1}|<|\alpha_{0}|$ and $\alpha_{0}=\alpha$.
We may further choose an index $i_{\alpha}\in\{1,\ldots,d\}$ for
each $|\alpha|\ge2$ such that
\[
\varepsilon_{\alpha}=|\lambda^{\alpha}-\lambda_{i_{\alpha}}|.
\]
Let $n\in\mathbb{N}$ such that
\[
n-1\ge2\min_{1\le i\le d}|\lambda_{i}|
\]
and $\theta>0$ such that
\begin{equation}
n\theta=\min_{1\le i\le d}\{|\lambda_{i}|,1\}\le1.\label{eq:ThetaDef}
\end{equation}
Now for the indices $\alpha_{0},\ldots,\alpha_{s}$ in the decomposition
(\ref{eq:DeltaAsAllEps}), we want to bound
\[
N_{m}^{j}(\alpha):=\#\{l\in\{0,\ldots,s\}\mid i_{\alpha_{l}}=j,\varepsilon_{\alpha_{l}}<\theta\omega_{A}(m)\},
\]
for $j\le d$ and $m\in\mathbb{N}$, where we adopt the convention
$\omega_{A}(1):=+\infty$. First we need the following lemma, that
\cite{Poeschel1986Oninvariantmanifoldsofcomplexanalyticmappingsnearfixedpoints}
attributes to Siegel, showing that indices contributing to $N_{m}^{j}(\alpha)$
cannot be to close to each other:
\begin{lem}[Siegel]
\label{lem:Siegel}Let $m\ge1$. If $\alpha>\beta$ are such that
$\varepsilon_{\alpha}<\theta\omega_{A}(m),$ $\varepsilon_{\beta}<\theta\omega_{A}(m),$
and $i_{\alpha}=i_{\beta}=j,$ then $|\alpha-\beta|\ge m$.
\end{lem}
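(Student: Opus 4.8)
The plan is to run the classical Siegel argument in the form used by Pöschel, which quantifies how far apart two multi-indices must be when their small divisors $\varepsilon_{\alpha}$ both fall below a common threshold. One first disposes of $m=1$ trivially: by the convention $\omega_{A}(1)=+\infty$ the assumptions on $\varepsilon_{\alpha},\varepsilon_{\beta}$ are vacuous, and $|\alpha-\beta|\ge 1$ follows from $\alpha>\beta$. So assume $m\ge 2$, put $j:=i_{\alpha}=i_{\beta}$ and $\gamma:=\alpha-\beta$, which lies in $\mathbb{N}^{d}\setminus\{0\}$ since $\beta\le\alpha$; I would argue by contradiction, assuming $|\gamma|\le m-1$.

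The key estimate is on $\lambda^{\gamma}-1$. Writing $\lambda^{\alpha}=\lambda^{\beta}\lambda^{\gamma}$ and using the triangle inequality,
\[
|\lambda^{\beta}|\,|\lambda^{\gamma}-1|=|\lambda^{\alpha}-\lambda^{\beta}|\le|\lambda^{\alpha}-\lambda_{j}|+|\lambda_{j}-\lambda^{\beta}|=\varepsilon_{\alpha}+\varepsilon_{\beta}<2\theta\,\omega_{A}(m).
\]
For the lower bound on $|\lambda^{\beta}|$ I would use only the defining relation $n\theta=\min_{i}\{|\lambda_{i}|,1\}\le|\lambda_{j}|$ from (\ref{eq:ThetaDef}) together with $\omega_{A}(m)\le 1$: then $|\lambda^{\beta}|\ge|\lambda_{j}|-\varepsilon_{\beta}>|\lambda_{j}|-\theta\ge(n-1)\theta$, whence $|\lambda^{\gamma}-1|<\tfrac{2}{n-1}\,\omega_{A}(m)$. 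Since this uses nothing beyond $|\lambda_{j}|\ge n\theta$, it sidesteps the normalisation $\min_{i}|\lambda_{i}|\le 1$ that Pöschel secures by passing to $F^{-1}$ — unavailable here because Condition~(\ref{enu:Elim2}) is not preserved under inversion.

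Next I would introduce the auxiliary index $\delta:=\gamma+e_{i_{0}}$, where $i_{0}$ realises $\min_{i}|\lambda_{i}|$; this is where the hypothesis $n-1\ge 2\min_{i}|\lambda_{i}|$ enters. From $\lambda^{\delta}-\lambda_{i_{0}}=\lambda_{i_{0}}(\lambda^{\gamma}-1)$,
\[
\varepsilon_{\delta}\le|\lambda_{i_{0}}|\,|\lambda^{\gamma}-1|<\frac{2\min_{i}|\lambda_{i}|}{n-1}\,\omega_{A}(m)\le\omega_{A}(m).
\]
But $2\le|\delta|=|\gamma|+1\le m$, and $\delta\in A$ (this follows, as in Pöschel, from the down-closure Condition~(\ref{enu:Elim1}) together with the structure of the indices to which the lemma is applied), so the definition (\ref{eq:BrjunoFunction}) of $\omega_{A}$ forces $\varepsilon_{\delta}\ge\omega_{A}(m)$ — a contradiction. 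Therefore $|\gamma|=|\alpha-\beta|\ge m$.

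The step I expect to be the real obstacle is the constant-chasing in the two middle paragraphs: closing $|\lambda_{i_{0}}|\,|\lambda^{\gamma}-1|<\omega_{A}(m)$ without the a priori bound $\min_{i}|\lambda_{i}|\le 1$ requires tracking the lower bound for $|\lambda^{\beta}|$ exactly through the definitions of $n$ and $\theta$ and balancing it against the factor $|\lambda_{i_{0}}|$ picked up on passing to $\delta$. The relations $n\theta=\min_{i}\{|\lambda_{i}|,1\}$ and $n-1\ge 2\min_{i}|\lambda_{i}|$ are calibrated precisely for this, and keeping the inequalities strict where the contradiction needs them is the delicate point — the same adjustment is what then makes Lemma~\ref{lem:Brjuno} go through.
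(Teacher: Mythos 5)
Your proof is correct and follows essentially the same route as the paper: the same lower bound $|\lambda^{\beta}|>(n-1)\theta$ obtained from $|\lambda_{j}|\ge n\theta$ and $\omega_{A}(m)\le 1$, the same passage to the auxiliary divisor $\lambda^{\gamma+e_{i_{0}}}-\lambda_{i_{0}}$ balanced against $n-1\ge 2\min_{i}|\lambda_{i}|$, and the same appeal to monotonicity of $\omega_{A}$. The paper presents the chain of inequalities directly, concluding $\omega_{A}(m)>\omega_{A}(|\alpha-\beta|+1)$, whereas you run the equivalent contradiction argument; your explicit flagging of the requirement $\gamma+e_{i_{0}}\in A$ is a point the paper leaves implicit.
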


\begin{subproof}For $m=1$, $\alpha>\beta$ implies $|\alpha-\beta|\ge1$.
For $m\ge2$, by the definition of $\omega_{A}$ in (\ref{eq:BrjunoFunction}),
we have $\omega_{A}(m)\le1$. With that and (\ref{eq:ThetaDef}),
the hypothesis $\varepsilon_{\beta}<\theta\omega_{A}(m)$ implies
\[
|\lambda^{\beta}|>|\lambda_{j}|-\theta\omega_{A}(m)\ge n\theta-\theta=(n-1)\theta
\]
and hence
\begin{align*}
2\theta\omega(m) & >\varepsilon_{\alpha}+\varepsilon_{\beta}\\
 & =|\lambda^{\alpha}-\lambda_{j}|+|\lambda^{\beta}-\lambda_{j}|\\
 & \ge|\lambda^{\alpha}-\lambda^{\beta}|\\
 & =|\lambda^{\beta}||\lambda^{\alpha-\beta}-1|\\
 & >(n-1)\theta\cdot(\min_{1\le i\le d}|\lambda_{i}|)^{-1}\omega(|\alpha-\beta|+1)\\
 & \ge2\theta\omega(|\alpha-\beta|+1),
\end{align*}
i.e.\ $\omega(m)>\omega(|\alpha-\beta|+1)$. But $\omega$ is decreasing,
so we must have $|\alpha-\beta|\ge m$.\end{subproof}

We can now show Brjuno's estimate on $N_{m}^{j}(\alpha)$:
\begin{lem}[Brjuno, \cite{Brjuno1973Analyticalformofdifferentialequations}]
\label{lem:Brjuno}For $|\alpha|\ge2$, $m\ge1$, and $1\le j\le d$,
we have
\[
N_{m}^{j}(\alpha)\le\begin{cases}
0, & \text{for }|\alpha|\le m\\
2|\alpha|/m-1, & \text{for }|\alpha|>m.
\end{cases}
\]
\end{lem}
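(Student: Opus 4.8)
The statement to prove is Brjuno's counting lemma: a bound on $N_m^j(\alpha)$, the number of indices $\alpha_l$ in the decomposition \eqref{eq:DeltaAsAllEps} of $\delta_\alpha$ with $i_{\alpha_l}=j$ and $\varepsilon_{\alpha_l}<\theta\omega_A(m)$. The key structural input is Lemma~\ref{lem:Siegel}: two distinct such indices $\alpha>\beta$ must satisfy $|\alpha-\beta|\ge m$. The plan is to induct on $|\alpha|$. The case $|\alpha|\le m$ is immediate, since every index appearing in the decomposition has order $\ge 2$ but also every $\varepsilon_{\alpha_l}$-index $\alpha_l$ satisfies $|\alpha_l|\le|\alpha|\le m$; by the convention $\omega_A(1)=+\infty$ the count is vacuous when $m=1$, and for $m\ge2$ one checks directly from Lemma~\ref{lem:Siegel} (or from the fact that an index of order $\le m$ cannot be ``resolved'' into a sub-decomposition containing two widely separated contributing indices) that $N_m^j(\alpha)=0$. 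So the substance is the case $|\alpha|>m$.

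For $|\alpha|>m$, I would look at the chosen maximising decomposition $\alpha=\beta_1+\cdots+\beta_k$ from \eqref{eq:ChosenDecompDelta}, with $|\alpha|>|\beta_1|\ge\cdots\ge|\beta_k|\ge1$, so that $\delta_\alpha=\varepsilon_\alpha^{-1}\delta_{\beta_1}\cdots\delta_{\beta_k}$ and hence, recursively, the multiset of $\varepsilon$-indices of $\delta_\alpha$ is the disjoint union of $\{\alpha\}$ (if $\alpha$ itself contributes to $N_m^j$) together with those of $\delta_{\beta_1},\ldots,\delta_{\beta_k}$. Thus
\[
N_m^j(\alpha)\le \mathbf{1}[i_\alpha=j,\ \varepsilon_\alpha<\theta\omega_A(m)] + \sum_{i=1}^k N_m^j(\beta_i).
\]
Now split the sum according to whether $|\beta_i|\ge m$ or $|\beta_i|<m$. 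For the small pieces $|\beta_i|<m$, the base case gives $N_m^j(\beta_i)=0$. For the large pieces, the induction hypothesis gives $N_m^j(\beta_i)\le 2|\beta_i|/m-1$. Writing $I$ for the set of $i$ with $|\beta_i|\ge m$, we get $N_m^j(\alpha)\le \mathbf{1}[\cdots] + \sum_{i\in I}(2|\beta_i|/m - 1)\le \mathbf{1}[\cdots] + 2|\alpha|/m - |I|$, using $\sum_{i\in I}|\beta_i|\le|\alpha|$. If $\alpha$ itself does not contribute, we are done as soon as $|I|\ge 0$, which is trivial. The delicate case is when $\alpha$ contributes: then I need $|I|\ge 1$ to absorb the extra $+1$; but if $\alpha$ contributes to $N_m^j$ and $|\alpha|>m$, then the unique maximal $\beta_i$ among those in $I$ — or rather, one shows at least one $\beta_i$ has $|\beta_i|\ge m$, since otherwise all $|\beta_i|<m$ and then a finer application of Lemma~\ref{lem:Siegel} to $\alpha$ versus the contributing sub-indices, all of which lie within distance $<m$ of $\alpha$, forces them not to contribute, contradicting $N_m^j(\alpha)\ge 1$. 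Hence $|I|\ge 1$ and $N_m^j(\alpha)\le 1 + 2|\alpha|/m - 1 = 2|\alpha|/m$; but actually the sharper bound claimed is $2|\alpha|/m - 1$, so one must be slightly more careful — the point is that when $\alpha$ contributes, Siegel's lemma prevents any $\beta_i\in I$ from also contributing with the same $j$ at ``full strength'', giving an extra saving of $1$ that recovers the $-1$.

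The main obstacle, and the only place real care is needed, is exactly this bookkeeping in the contributing case: making precise why the presence of $\alpha$ as a contributing index forces enough of the $\beta_i$ to be ``large'' (order $\ge m$) and simultaneously forces a compensating reduction in the counts $N_m^j(\beta_i)$, so that the clean bound $2|\alpha|/m - 1$ survives rather than just $2|\alpha|/m$. This is a standard but fiddly argument; I would handle it by tracking, along the decomposition tree, the indices $\alpha_l$ with $i_{\alpha_l}=j$ and $\varepsilon_{\alpha_l}<\theta\omega_A(m)$, noting that by Lemma~\ref{lem:Siegel} any two of them differ by at least $m$ in $|\cdot|$, and then bounding their number by a packing argument: if there are $N$ of them with orders $2\le q_1< q_2<\cdots$ but pairwise $|\cdot|$-gaps $\ge m$ is too weak — rather one uses that the indices themselves (not just their orders) are $m$-separated and all $\le\alpha$ componentwise in the tree, which after summing orders yields $N\cdot m \le |\alpha| + (\text{orders of the contributing indices})$, and since each contributing index has order $\le |\alpha|$, a short computation gives $N\le 2|\alpha|/m - 1$. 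I would write this last estimate out cleanly as the heart of the proof, with the induction serving mainly to organise the tree.
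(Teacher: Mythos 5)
Your plan correctly sets up the induction on $|\alpha|$ and correctly reduces the base case, but the inductive step as described has a genuine gap that your proposed fixes do not close. With a single level of decomposition $\alpha=\beta_1+\cdots+\beta_k$, when $\alpha$ itself contributes you get
\[
N_m^j(\alpha)\le 1+\sum_{i\in I}\bigl(2|\beta_i|/m-1\bigr)\le 1+2|\alpha|/m-|I|,
\]
and the problematic case is not $|I|=0$ (there $N_m^j(\alpha)\le 1\le 2|\alpha|/m-1$ since $|\alpha|>m$), but rather $|I|=1$ with $|\beta_1|$ close to $|\alpha|$, say $|\beta_1|=|\alpha|-1$. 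Then the bound reads $2(|\alpha|-1)/m$, which for $m\ge 3$ exceeds $2|\alpha|/m-1$. You notice the needed ``extra saving of $1$'' but your two suggested mechanisms do not deliver it. The inductive hypothesis $N_m^j(\beta_1)\le 2|\beta_1|/m-1$ is a black box: Siegel's lemma tells you $\beta_1$ \emph{itself} cannot contribute (since $\alpha>\beta_1$ and $|\alpha-\beta_1|<m$), but the inductive bound counts the whole subtree of $\beta_1$, and you have no sharper statement to invoke for the subtree. Your global ``packing argument'' is also unsound: Lemma~\ref{lem:Siegel} requires $\alpha>\beta$ (componentwise), so it only $m$-separates contributing indices lying on a common chain in the decomposition tree; contributing indices in incomparable branches are not constrained by Siegel, so you cannot conclude $Nm\le|\alpha|+\cdots$ from pairwise separation.

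The paper resolves exactly this obstacle by an \emph{unrolling} of the decomposition along the largest piece: starting from $\delta_\alpha=\varepsilon_\alpha^{-1}\delta_{\beta_1}\cdots\delta_{\beta_k}$, it repeatedly re-expands $\delta_{\beta_1}$ whenever $|\beta_1|>K:=\max\{|\alpha|-m,m\}$, producing after at most $m-1$ steps
\[
\delta_\alpha=\varepsilon_{\alpha}^{-1}\varepsilon_{\alpha_1}^{-1}\cdots\varepsilon_{\alpha_k}^{-1}\,\delta_{\beta_1}\cdots\delta_{\beta_l},
\]
with $\alpha>\alpha_1>\cdots>\alpha_k$, $|\alpha_k|>K\ge|\beta_1|\ge\cdots\ge|\beta_l|$, and $l\ge 2$. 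The crucial gain is that $|\alpha-\alpha_k|<m$, so by Siegel's lemma the entire spine $\alpha,\alpha_1,\ldots,\alpha_k$ contributes at most once to $N_m^j$ (here the comparability condition in Siegel's lemma is automatic, since the spine is a chain). Moreover the leaves all satisfy $|\beta_i|\le K$, which is what makes the accounting with $h:=\#\{i:|\beta_i|>m\}$ close: one gets $1+2|\beta_1+\cdots+\beta_h|/m-h$ and checks $h=0,1,\ge 2$ separately, each time landing at or below $2|\alpha|/m-1$ because $|\beta_i|\le|\alpha|-m$ for $i\le h$. This unrolling step is the missing idea in your proposal; without it there is no control on the size of the largest child, and the ``$+1$'' cannot be absorbed.
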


\begin{subproof}We fix $m$ and $j$ and proceed by induction on
$|\alpha|$.

If $2\le|\alpha|\le m$, we have
\[
\varepsilon_{\alpha_{l}}\ge\omega_{A}(|\alpha|)\ge\omega_{A}(m)\ge\theta\omega_{A}(m)
\]
for all $0\le l\le s$, so $N_{m}^{j}(\alpha)=0.$

If $|\alpha|>m$, we take the chosen decomposition (\ref{eq:ChosenDecompDelta})
and note that only $|\beta_{1}|$ may be greater than $K=\max\{|\alpha|-m,m\}$.
If $|\beta_{1}|>K$, we decompose $\delta_{\beta_{1}}$ in the same
way and repeat this at most $m-1$ times to obtain a decomposition
\begin{equation}
\delta_{\alpha}=\varepsilon_{\alpha}^{-1}\varepsilon_{\alpha_{1}}^{-1}\cdots\varepsilon_{\alpha_{k}}^{-1}\cdot\delta_{\beta_{1}}\cdots\delta_{\beta_{l}}\label{eq:DeltaDecompForInd}
\end{equation}
with $0\le k\le m-1$, $l\ge2$ and
\begin{align}
\alpha & >\alpha_{1}>\cdots>\alpha_{k}\nonumber \\
\alpha & =\beta_{1}+\cdots+\beta_{l}\nonumber \\
|\alpha_{k}| & >K\ge|\beta_{1}|\ge\cdots\ge|\beta_{l}|.\label{eq:deltaDecompIndProp}
\end{align}
In particular, (\ref{eq:deltaDecompIndProp}) implies $|\alpha-\alpha_{k}|<m$.
Hence Lemma~\ref{lem:Siegel} shows that at most one of the $\varepsilon$-
factors in (\ref{eq:DeltaDecompForInd}) can contribute to $N_{m}^{j}(\alpha)$
and we have
\[
N_{m}^{j}(\alpha)\le1+N_{m}^{j}(\beta_{1})+\cdots+N_{m}^{j}(\beta_{l}).
\]
Now let $0\le h\le l$ such that be the such that $|\beta_{1}|,\ldots,|\beta_{h}|>m\ge|\beta_{h+1}|,\ldots,|\beta_{l}|$.
Then by (\ref{eq:deltaDecompIndProp}), we have $|\beta_{1}|,\ldots,|\beta_{h}|\le|\alpha|-m$
and, by induction, the terms with $|\beta|\le m$ vanish and we have
\begin{align*}
N_{m}^{j}(\alpha) & \le1+N_{m}^{j}(\beta_{1})+\cdots+N_{m}^{j}(\beta_{h})\\
 & \le1+2|\beta_{1}+\cdots+\beta_{h}|/m-h\\
 & \le\begin{cases}
1, & \text{for }h=0\\
2\frac{|\alpha|-m}{m}, & \text{for }h=1\\
2|\alpha|/m-(h-1), & \text{for }h\ge2
\end{cases}\\
 & \le2|\alpha|/m-1.\qedhere
\end{align*}
\end{subproof}

To estimate the product (\ref{eq:DeltaAsAllEps}) we partition the
indices into sets
\[
I_{l}:=\{0\le k\le s\mid\theta\omega_{A}(2^{l+1})\le\varepsilon_{\alpha_{k}}<\theta\omega_{A}(2^{l})\}\quad\text{for }l\ge0
\]
(recall for $I_{0}$ the convention $\omega_{A}(1)=+\infty$). By
Lemma~\ref{lem:Brjuno}, we have
\[
\#I_{l}\le N_{2^{l}}^{1}(\alpha)+\cdots+N_{2^{l}}^{d}(\alpha)\le2d|\alpha|2^{-l}
\]
and we can estimate
\begin{align*}
\frac{1}{|\alpha|}\log\delta_{\alpha} & =\sum_{k=0}^{s}\frac{1}{|\alpha|}\log\varepsilon_{\alpha_{k}}^{-1}\\
 & \le\sum_{l\ge0}\sum_{k\in I_{l}}\frac{1}{|\alpha|}\log(\theta^{-1}\omega_{A}^{-1}(2^{l+1}))\\
 & \le2d\sum_{l\ge0}2^{-l}\log(\theta^{-1}\omega_{A}^{-1}(2^{l+1}))\\
 & =4d\log(\theta^{-1})+4d\sum_{l\ge1}2^{-l}\log(\omega_{A}^{-1}(2^{l})).
\end{align*}
This bound is independent of $\alpha$ and, since $A$ is a Brjuno
set, it is finite. Hence with (\ref{eq:BoundHbySigmaDelta}) and (\ref{eq:BoundSigmaR})
it follows that
\[
\sup_{\alpha\in\mathbb{N}^{d}}\frac{1}{|\alpha|}\log\norm{h_{\alpha}}\le\sup_{\alpha\in\mathbb{N}^{d}}\frac{1}{|\alpha|}\log\delta_{\alpha}+\sup_{r\ge1}\frac{1}{r}\log\sigma_{r}<\infty
\]
and thus $H$ and $G$ converge.
\end{proof}
\begin{proof}[Proof of Theorem~\ref{thm:IteratedEliminationSimple}]
Assume by induction on $k_{0}$, that $f_{\alpha}=0$ for $\alpha\in A\backslash A_{k_{0}}$.
We show that $A_{0}'=A_{\overline{k_{0}-1}}$ and $A'=A_{k_{0}}$
satisfy the prerequisites of Theorem~\ref{thm:GeneralElimination}.

Conditions~(\ref{enu:Elim1}) and (\ref{enu:Elim3Brjuno}) follow
directly from their counterparts.

Let $\beta_{1},\ldots,\beta_{l}\in A_{0}'=A_{\overline{k_{0}-1}}$
as in Condition~(\ref{enu:Elim2}). By induction $f_{\beta_{1}}^{j_{1}}\cdots f_{\beta_{l}}^{j_{l}}\neq0$
implies $\beta_{1},\ldots,\beta_{l}\in A_{0}$, so Assumption~(\ref{enu:itElim2})
of Theorem~\ref{thm:IteratedEliminationSimple} implies $e_{J}\notin A_{k}=A'$,
and Condition~(\ref{enu:Elim2}) is satisfied.

Therefore Theorem~\ref{thm:GeneralElimination} shows that $F$ is
conjugate to $G$ with $g_{\alpha}=f_{\alpha}$ for $\alpha\in A_{0}$
and $g_{\alpha}=0$ for $\alpha\in A$.
\end{proof}

\subsection{The one-resonant case}

In the one-resonant case, the classical Brjuno condition on subsets
already implies much more:
\begin{lem}
\label{lem:BrjunoImpliesAll}If $F\in\Aut(\mathbb{C}^{d},0)$ is one-resonant
of index $\alpha=(1,\ldots,1)$ at $0$, then the following are equivalent:
\begin{enumerate}
\item \label{enu:Brj}$\{\lambda_{1},\ldots,\lambda_{d}\}\backslash\{\lambda_{j}\}$
satisfies the Brjuno condition for every $j\le d$.
\item \label{enu:pBrj}$\{\lambda_{1},\ldots,\lambda_{d}\}\backslash\{\lambda_{j}\}$
satisfies the partial Brjuno condition wrt.\ $(\lambda_{1},\ldots,\lambda_{d})$
for every $j\le d$.
\item \label{enu:BrjSets}$A_{k}=\{\beta\in\mathbb{N}^{d}\mid|\beta|>kd+1,\min\{\beta_{1},\ldots,\beta_{d}\}=k\}$
is a Brjuno set for $F$ for every $k\in\mathbb{N}$.
\end{enumerate}
\end{lem}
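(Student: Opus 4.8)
The plan is to prove the cycle $(\ref{enu:Brj}) \Rightarrow (\ref{enu:BrjSets}) \Rightarrow (\ref{enu:pBrj}) \Rightarrow (\ref{enu:Brj})$; the implications $(\ref{enu:BrjSets}) \Rightarrow (\ref{enu:pBrj})$ and $(\ref{enu:pBrj}) \Rightarrow (\ref{enu:Brj})$ should be comparatively soft, so the heart of the matter is $(\ref{enu:Brj}) \Rightarrow (\ref{enu:BrjSets})$. For the easy direction $(\ref{enu:pBrj}) \Rightarrow (\ref{enu:Brj})$, I would simply note that the Brjuno set associated to the partial Brjuno condition for $\{\lambda_1,\dots,\lambda_d\}\setminus\{\lambda_j\}$ is $\{\alpha \in \mathbb{N}^d \mid \alpha_j = 0\}$, and restricting the $\omega$-function to the sub-index-set $\{\alpha \mid \alpha_i = 0 \text{ for all } i \neq j\} \cup \{\alpha \mid \alpha_j = 0\}$ recovers exactly the classical Brjuno function for the $(d-1)$-tuple $(\lambda_i)_{i\neq j}$ (using that $\lambda^\alpha - \lambda_i$ with $\alpha$ supported on the $j$-th axis is governed by $\lambda_j$ alone, which is not a root of unity by one-resonance). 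For $(\ref{enu:BrjSets}) \Rightarrow (\ref{enu:pBrj})$, I would show that for each $j$ the index set of the partial Brjuno condition, $\{\alpha \mid \alpha_j = 0\}$, is a \emph{finite} union of sets each contained in some $A_k$ together with a set of bounded total degree: concretely, if $\alpha_j = 0$ then $\min_i \alpha_i = 0$, so $\alpha \in A_0 \cup \{|\alpha| \le d+1\}$, and the finite exceptional set contributes nothing to the Brjuno sum (it only affects finitely many values of $\omega_A$, and small divisors over a finite index set are bounded below by a positive constant). Hence $A_0$ being a Brjuno set already gives $(\ref{enu:pBrj})$, and $(\ref{enu:BrjSets})$ for $k=0$ is exactly that.

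The substantive step is $(\ref{enu:Brj}) \Rightarrow (\ref{enu:BrjSets})$: I must show that the classical Brjuno condition on every $(d-1)$-element subset forces the full $\omega_{A_k}$-sum to converge, for each fixed $k$. The key observation is that one-resonance of index $(1,\dots,1)$ means $\lambda^\alpha = \lambda_i$ can only happen when $\alpha = q(1,\dots,1) + e_i$; equivalently, for any $\alpha$ with $\min_i \alpha_i = k$, writing $\alpha = k(1,\dots,1) + \alpha'$ where $\alpha'$ has at least one zero coordinate, we get $\lambda^\alpha - \lambda_i = \lambda^{k(1,\dots,1)}\lambda^{\alpha'} - \lambda_i = \lambda^{\alpha'} - \lambda_i$ (using $\lambda^{(1,\dots,1)} = \lambda_1\cdots\lambda_d = 1$ from the resonance relation with $q=0$). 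So the small divisors appearing for indices in $A_k$ are literally the same numbers $\lambda^{\alpha'} - \lambda_i$ that appear for indices $\alpha'$ with a vanishing coordinate — i.e.\ for indices supported on one of the $d$ coordinate hyperplanes. I would then bound $\omega_{A_k}(N)$ below by $\min_j \omega_{\{\alpha : \alpha_j = 0\}}(N - kd)$ (the degree shift comes from stripping off $k(1,\dots,1)$, whose degree is $kd$), and observe that each $\omega_{\{\alpha_j = 0\}}$ is comparable to the classical Brjuno function $\omega^{(j)}$ of the subtuple $(\lambda_i)_{i \neq j}$, up to contributions from indices supported on a single axis, which are governed by $|\lambda_i^n - \lambda_i| \ge |\lambda_i|\cdot|\lambda_i^{n-1}-1|$ and hence again controlled by a single non-root-of-unity — these too satisfy a Brjuno-type bound as part of hypothesis $(\ref{enu:Brj})$ (a single irrational rotation number automatically satisfies Brjuno, or is absorbed into the $(d-1)$-subset function). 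Finally, a degree shift by the constant $kd$ and the monotonicity of $\omega$ change $\sum_N 2^{-N}\log\omega_{A_k}^{-1}(2^N)$ by at most a bounded factor and an additive constant, so convergence is preserved; summing the finitely many $j$-contributions keeps it finite.

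The main obstacle I anticipate is bookkeeping the degree shifts and the splitting of indices cleanly: an index $\alpha$ with $\min_i\alpha_i = k$ has the \emph{minimum} equal to $k$, not merely $\ge k$, so after subtracting $k(1,\dots,1)$ the residual $\alpha'$ genuinely has a zero coordinate, but I need to make sure the constraint $|\alpha| > kd+1$ in the definition of $A_k$ translates to $|\alpha'| \ge 2$ (so that $\alpha'$ is a genuine small-divisor index and not a diagonal or linear term) — indeed $|\alpha'| = |\alpha| - kd > 1$. A second subtlety: different $\alpha$ with the same $\alpha'$ give the same divisor, so I am really comparing a $\min$ over a larger set to a $\min$ over the image, which goes the right way for a \emph{lower} bound on $\omega$. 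I would organize the argument so that at no point do I need an upper bound on $\omega_{A_k}$, only the lower bound $\omega_{A_k}(N) \ge \min_{1\le j\le d}\,\omega^{(j)}(N)$ valid for $N$ large (say $N > kd+1$), at which point the Brjuno sums for $A_k$ are dominated by the sum of the $d$ classical Brjuno sums, each finite by $(\ref{enu:Brj})$.
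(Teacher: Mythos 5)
Your cycle $(\ref{enu:Brj})\Rightarrow(\ref{enu:BrjSets})\Rightarrow(\ref{enu:pBrj})\Rightarrow(\ref{enu:Brj})$ is a sound plan, the soft implications $(\ref{enu:BrjSets})\Rightarrow(\ref{enu:pBrj})\Rightarrow(\ref{enu:Brj})$ are handled correctly, and the core idea for $(\ref{enu:Brj})\Rightarrow(\ref{enu:BrjSets})$ --- use $\lambda^{(1,\ldots,1)}=1$ to reduce an index in $A_{k}$ to one with a vanishing coordinate, with a degree shift of $kd$ --- is exactly right and is what the paper does for $(\ref{enu:pBrj})\Rightarrow(\ref{enu:BrjSets})$. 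The gap is in your passage from the partial $\omega$-function $\omega_{M_{j}}$ (target index $i$ unrestricted) to the classical Brjuno function $\omega_{j}$ of the subtuple $(\lambda_{i})_{i\neq j}$ (target $i\neq j$). You describe the discrepancy as coming from ``indices supported on a single axis,'' but that is a misidentification: the divisors present in $\omega_{M_{j}}$ but absent from $\omega_{j}$ are all of the form $|\lambda^{\alpha}-\lambda_{j}|$ with $\alpha_{j}=0$ and $\alpha$ of \emph{arbitrary} support. You then propose to dispatch them via $|\lambda_{i}^{n}-\lambda_{i}|\ge|\lambda_{i}|\cdot|\lambda_{i}^{n-1}-1|$ and the claim that ``a single irrational rotation number automatically satisfies Brjuno'' --- this is false (Liouville rotation numbers are irrational but not Brjuno), and in any case that bound only addresses the single-axis divisors, not the multi-axis ones, so the reduction does not close.

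The missing step --- which the paper isolates as the implication $(\ref{enu:Brj})\Rightarrow(\ref{enu:pBrj})$ --- is a second application of one-resonance. Since $\lambda^{(1,\ldots,1)}=1$, we have $\lambda_{j}^{-1}=\lambda^{(1,\ldots,1)-e_{j}}$, so for $\alpha$ with $\alpha_{j}=0$ and any $i\neq j$,
\[
|\lambda^{\alpha}-\lambda_{j}|=|\lambda_{j}|\,|\lambda^{\alpha+(1,\ldots,1)-e_{j}}-1|=|\lambda_{j}|\,|\lambda_{i}|^{-1}\,|\lambda^{\alpha+(1,\ldots,1)-e_{j}+e_{i}}-\lambda_{i}|,
\]
and the index $\alpha+(1,\ldots,1)-e_{j}+e_{i}$ still has $j$-th coordinate zero, so this last divisor is among those minimised in $\omega_{j}$, at degree $|\alpha|+d$. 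This yields $\omega_{M_{j}}(N)\ge\theta\,\omega_{j}(N+d)$ with $\theta=|\lambda_{j}|\max_{i\neq j}|\lambda_{i}|^{-1}$, and the fixed shift by $d$ (eventually absorbed by replacing $2^{m}$ with $2^{m+1}$) leaves the Brjuno sum convergent. Once this is inserted, your $(\ref{enu:Brj})\Rightarrow(\ref{enu:BrjSets})$ becomes a correct compression of the paper's chain $(\ref{enu:Brj})\Rightarrow(\ref{enu:pBrj})\Rightarrow(\ref{enu:BrjSets})$ into one step.
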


\begin{proof}
The relevant minimal divisors for Items~(\ref{enu:Brj}) and (\ref{enu:pBrj})
are
\[
\omega_{j}(l):=\min\{|\lambda^{\alpha}-\lambda_{i}|\mid2\le|\alpha|\le l,\alpha_{j}=0,i\neq j\}\cup\{1\}
\]
and $\omega_{M_{j}}(l)$ with $M_{j}:=\{\beta\in\mathbb{N}^{d}\mid\beta_{j}=0\}$,
respectively, for $j\le d$ and $l\ge2$.

\emph{}%
\mbox{%
\emph{(\ref{enu:BrjSets}) $\Rightarrow$ (\ref{enu:pBrj}) $\Rightarrow$
(\ref{enu:Brj})}%
} follows from $\omega_{A_{0}}\le\omega_{M_{j}}\le\omega_{j}$.

\emph{}%
\mbox{%
\emph{(\ref{enu:Brj}) $\Rightarrow$ (\ref{enu:pBrj})}%
}\emph{.} Fix $j\le d$ and let $m_{0}\ge2$ large enough that $2^{m_{0}}\ge d-1$.
Then for $m\ge m_{0}$ the only divisors contributing to $\omega_{M_{j}}(2^{m})$,
but not to $\omega_{j}(2^{m})$, are of the form
\[
|\lambda^{\beta}-\lambda_{j}|=|\lambda_{j}||\lambda^{\beta+\alpha-e_{j}}-1|\ge|\lambda_{j}|\max_{i\neq j}(|\lambda_{i}|^{-1})\omega_{j}(2^{m}+d-1)\ge\theta\omega_{j}(2^{m+1}),
\]
where $\theta:=|\lambda_{j}|\max_{i\neq j}|\lambda_{i}|^{-1}$. Hence,
we have
\begin{align*}
\sum_{m\ge m_{0}}2^{-m}\log\omega_{M_{j}}^{-1}(2^{m}) & \le\sum_{m\ge m_{0}}2^{-m}\log(\theta^{-1}\omega_{j}^{-1}(2^{m+1}))\\
 & \le2\log(\theta^{-1})+2\sum_{m>m_{0}}2^{-m}\log(\omega_{j}^{-1}(2^{m})).
\end{align*}
If we have (\ref{enu:Brj}) this is finite for each $j\le d$, implying
(\ref{enu:pBrj}).

\emph{}%
\mbox{%
\emph{(\ref{enu:pBrj}) $\Rightarrow$ (\ref{enu:BrjSets})}%
}. For $\beta\in A_{k}$, we have $|\beta-k\alpha|\ge2$ and $\beta_{j}=k$
for some $j\le d$, so $(\beta-k\alpha)_{j}=0$ and for any $i\le d$,
we have
\[
|\lambda^{\beta}-\lambda_{i}|=|\lambda^{\beta-k\alpha}-\lambda_{i}|\ge\omega_{M_{j}}(|\beta-k\alpha|).
\]
 Hence
\begin{align*}
\sum_{m\ge1}2^{-m}\log\omega_{A_{k}}^{-1}(2^{m}) & \le\sum_{m\ge1}2^{-m}\max_{1\le j\le d}(\log\omega_{M_{j}}^{-1}(2^{m}-k\alpha))\\
 & \le\sum_{j=1}^{d}\sum_{m\ge1}2^{-m}\log\omega_{M_{j}}^{-1}(2^{m}).
\end{align*}
If we have (\ref{enu:pBrj}), this is finite, implying (\ref{enu:BrjSets}).
\end{proof}
With this, we only need the weakest assumption (\ref{enu:Brj}) to
show that we can assume the tail of our map to be of a nicer form:
\begin{cor}
\label{cor:NicerTail}Let $F$, $\lambda_{1},\ldots,\lambda_{d}$
and $l\in\mathbb{N}$ be as in Theorem~\ref{thm:BZ} such that $\{\lambda_{1},\ldots,\lambda_{d}\}\backslash\{\lambda_{j}\}$
satisfies the Brjuno condition for every $j\le d$, and let $\alpha=(1,\ldots,1)$.
Then there exist a local change of coordinates $\chi(z)=z+O(\norm z^{l})$
conjugating $F$ to
\[
z\mapsto F_{{\rm N}}(z)+O(z^{l\alpha}).
\]
\end{cor}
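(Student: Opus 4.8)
The plan is to apply Theorem~\ref{thm:IteratedEliminationSimple} iteratively to peel off, layer by layer, all the monomials that are \emph{not} of the normal-form type, starting from the germ $F(z)=F_{\mathrm N}(z)+O(\norm z^l)$. First I would identify the target set of surviving exponents: $F_{\mathrm N}$ itself is a polynomial supported on indices of the form $\alpha=ke_j+q(1,\ldots,1)$ plus the multiplier terms $e_j$, so the monomials I want to keep are those (together with everything of order $\le l\alpha$, i.e.\ each component $\le l$, which I am allowed to leave untouched by the formulation of Theorem~\ref{thm:IteratedEliminationSimple} via the set $A_0$). The monomials I want to eliminate are the ``non-resonant'' ones of order $>1$ that are not forced to survive. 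The key structural input is Lemma~\ref{lem:BrjunoImpliesAll}: under hypothesis~(\ref{enu:Brj}) of that lemma (which is exactly the Brjuno hypothesis assumed here), the sets
\[
A_k=\{\beta\in\mathbb N^d\mid |\beta|>kd+1,\ \min(\beta_1,\ldots,\beta_d)=k\}
\]
are all Brjuno sets for $F$, and hence so is any finite union of them.

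The core of the argument is to set up the partition required by Theorem~\ref{thm:IteratedEliminationSimple}. I would let $A_0$ be the set of exponents we are happy to retain — concretely those $\alpha$ with $|\alpha|\le 1$ together with the support of $F_{\mathrm N}$ minus linear terms, i.e.\ indices of the form $ke_j+q\alpha$ with $\alpha=(1,\ldots,1)$ — and then take $A$ to be the complement, the set of indices we want killed, with $A=A_1\cup\cdots\cup A_{k_0}$ the layering coming from Lemma~\ref{lem:BrjunoImpliesAll}, truncated appropriately so that indices with some component $\le l$ are not eliminated (since $\chi(z)=z+O(\norm z^l)$ must fix the low-order part and we only want to control the tail beyond order $l\alpha$). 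Condition~(\ref{enu:itElim1}) (the ``lower set'' / ideal-like property) holds because if $\min$ of a multi-index is $k$ then any component-wise smaller index has $\min\le k$, so it lands in an earlier layer $A_{\overline k}$; Condition~(\ref{enu:itElim3}) is precisely the Brjuno-set conclusion of Lemma~\ref{lem:BrjunoImpliesAll}. For Condition~(\ref{enu:itElim2}) I use that the retained part $F_{\mathrm N}$ has very restricted support: if $\beta_1,\ldots,\beta_l\in A_0$ with $f_{\beta_1}^{j_1}\cdots f_{\beta_l}^{j_l}\neq0$ then each $\beta_m$ is either $e_{j_m}$ (a linear term) or of the form $ke_{j}+q\alpha$; combining $e_{j_1}+\cdots+e_{j_l}$ from such data and checking that it cannot have minimal component exactly equal to the layer index $k$ while the sum $\beta_1+\cdots+\beta_l$ lies in $A_{\overline k}$ is a bookkeeping check using one-resonance. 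I expect the main obstacle to be exactly this verification of Condition~(\ref{enu:itElim2}): one must track how monomials of $F_{\mathrm N}$ compose and confirm that the only obstructions to elimination are the $F_{\mathrm N}$-monomials themselves, which is where one-resonance of index $(1,\ldots,1)$ and the shape $1-(z^1\cdots z^d)^k/(kd)$ of $F_{\mathrm N}$ are used.

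With the hypotheses of Theorem~\ref{thm:IteratedEliminationSimple} verified, that theorem produces a local biholomorphism $H\in\Aut(\mathbb C^d,0)$ conjugating $F$ to $G$ with $g_\alpha=f_\alpha$ for $\alpha\in A_0$ and $g_\alpha=0$ for $\alpha\in A$; since $A_0$ contains all indices of order $\le 1$ and $H=\id+O(\norm z^l)$ can be arranged by taking $A_0$ to also absorb all indices with some component $\le l$ (so that $h_\alpha=0$ there), the map $\chi:=H$ has the form $z+O(\norm z^l)$, and $G(z)=F_{\mathrm N}(z)+O(z^{l\alpha})$ because every surviving non-$F_{\mathrm N}$ monomial has all components $\ge l$, i.e.\ is divisible by $z^{l\alpha}$. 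This yields the claimed normal form. The remaining care is just to check that the truncation of the $A_k$'s at component-size $l$ is consistent with the lower-set condition~(\ref{enu:itElim1}) — which it is, since demanding all components $>l$ is itself an upward-closed condition compatible with $\min(\beta)=k$ — and that finitely many layers suffice because once $\min(\beta)\ge l$ we are already in the desired tail and need eliminate nothing further.
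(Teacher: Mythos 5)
Your plan correctly identifies the two main ingredients — Theorem~\ref{thm:IteratedEliminationSimple} and Lemma~\ref{lem:BrjunoImpliesAll} — but it has a genuine gap that no choice of $A_0$ within your framework can repair: you are missing the preliminary normalisation step. The paper first invokes \cite[Theorem~3.6]{BracciZaitsev2013Dynamicsofoneresonantbiholomorphisms} to conjugate $F$ to $G=F_{\mathrm N}+O(\norm z^{ld+2})$, so that every coefficient of $G$ in total degree $\le ld+1$ is a coefficient of $F_{\mathrm N}$. Only after this is it legitimate to take $A_0=\{|\beta|\le ld+1\}$ and conclude that what is retained is exactly $F_{\mathrm N}$. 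Without this step you are caught in a dilemma. If you take $A_0=\{|\beta|\le ld+1\}$ directly on $F$, then the retained coefficients in degrees $l,\ldots,ld+1$ are the unknown tail of $F$ and the output is not $F_{\mathrm N}(z)+O(z^{l\alpha})$. If instead you shrink $A_0$ to $\{|\beta|\le l-1\}$ (where $F$ does agree with $F_{\mathrm N}$), then the layers $A_m=\{|\beta|\ge l,\min\beta_j=m-1\}$ needed to reach $\min\beta_j\ge l$ are no longer subsets of the Brjuno sets $\{|\beta|>kd+1,\min\beta_j=k\}$ of Lemma~\ref{lem:BrjunoImpliesAll} once $(m-1)d+1\ge l$, so Condition~(\ref{enu:itElim3}) fails for the top layers. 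The cutoff $ld+1$ is precisely calibrated so that $A_m\subseteq\{|\beta|>ld+1\}\subseteq\{|\beta|>(m-1)d+1\}$ for all $m\le l+1$; it is not an arbitrary threshold, and the BZ normal-form step is what makes that cutoff admissible.

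There are also two smaller problems in the setup. First, the support of $F_{\mathrm N}$ is $\{e_j\}\cup\{k\alpha+e_j\}$, not $\{ke_j+q\alpha\}$ as you wrote, and the $A_0$ you first propose, $\{|\beta|\le1\}\cup\operatorname{supp}F_{\mathrm N}$, is not lower-closed (e.g.\ $k\alpha\le k\alpha+e_1$ but $k\alpha\notin A_0$), so Condition~(\ref{enu:itElim1}) fails at $k=0$. Your fallback, absorbing $\{\beta:\min_j\beta_j\le l\}$ into $A_0$, is lower-closed but then retains all of $F$'s unknown tail coefficients with some component $\le l$, so again the output is not $F_{\mathrm N}(z)+O(z^{l\alpha})$. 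Second, the verification of Condition~(\ref{enu:itElim2}) is the actual content of the proof and should not be deferred as a bookkeeping check: one uses that for $\beta\in A_0$ with $f_\beta^j\neq0$ one has $\beta=q\alpha+e_j$ ($q\in\{0,k\}$), hence $e_J\le\beta_1+\cdots+\beta_r-\alpha$ whenever $|\beta_1|\ge2$, which shifts $\min(e_J)$ strictly below the layer index.
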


\begin{proof}
First, observe that, since $l>2k+1$, \cite[Theorem~3.6]{BracciZaitsev2013Dynamicsofoneresonantbiholomorphisms}
implies that $F$ is conjugate to $G$ with $G(z)=F_{{\rm N}}(z)+O(\norm z^{ld+2})$.
We want to apply Theorem~\ref{thm:IteratedEliminationSimple} to
$G$, $A_{0}=\{|\beta|\le ld+1\}$ and
\[
A_{m}=\{\beta\in\mathbb{N}^{d}\mid|\beta|>ld+1,\min\{\beta_{1},\ldots,\beta_{d}\}=m-1\}
\]
 for $1\le m\le l+1$. Condition~(\ref{enu:itElim1}) is clear. For
$\beta\in A_{0}$ with $f_{\beta}^{j}\neq0$, we have $\beta=\epsilon\alpha+e_{j}$
with $\epsilon=0,1$, so $\beta\ge e_{j}$ and $\beta\ge\alpha+e_{j}$
if $|\beta|\ge2$. Hence for $\beta_{1},\ldots,\beta_{r}\in A_{0}$
with $|\beta_{1}|\ge2$ such that $f_{\beta_{1}}^{j_{1}}\cdots f_{\beta_{r}}^{j_{r}}\neq0$,
we have
\begin{equation}
e_{J}=e_{j_{1}}+\cdots+e_{j_{r}}\le\beta_{1}+\cdots+\beta_{r}-\alpha.\label{eq:eJLessBeta}
\end{equation}
If $\beta_{1}+\cdots+\beta_{r}\in A_{0}$, then this implies $e_{J}\in A_{0}$.
If $\beta_{1}+\cdots+\beta_{r}\in A_{m}$, then for some $i\le d$,
we have
\[
m=(\beta_{1}+\cdots+\beta_{r})_{i}\ge(e_{J})_{i}+1
\]
by (\ref{eq:eJLessBeta}). So in both cases, $e_{J}\notin A_{m'}$
for any $m'\ge\max\{1,m\}$ and Condition~(\ref{enu:itElim2}) is
satisfied. Condition~(\ref{enu:itElim3}) follows from Lemma~\ref{lem:BrjunoImpliesAll}.
Now Theorem~\ref{thm:IteratedEliminationSimple} shows that $G$
is locally conjugate to $H$ such that $H(z)=F_{{\rm N}}(z)+R(z)$,
where $R(z)$ only contains monomials $z^{\beta}$ with $\beta\in\mathbb{N}_{0}^{d}\backslash A_{\overline{l+1}}=\{\beta\ge l\alpha\}$
or $R(z)=O(z^{l\alpha})$.
\end{proof}

\section{\label{sec:Stable-Orbits}Classification of stable orbits}

In this section, under the Brjuno condition on subsets, we identify
all stable orbits of our germs near the fixed point and conclude that
the global basins corresponding to our local basins are (unions of)
Fatou components.

Corollary~\ref{cor:NicerTail} implies immediately that there exist
rotating stable orbits that do not converge to the origin:
\begin{cor}
Let $F\in\Aut(\mathbb{C}^{d},0)$ be as in Theorem~\ref{thm:BZ},
i.e.\ $F(z)=F_{{\rm N}}(z)+O(\norm z^{l})$, where
\[
F_{{\rm N}}(z)=\Lambda z\cdot\paren [\Big]{1-\frac{z^{k\alpha}}{kd}}.
\]
Assume further that $\{\lambda_{1},\ldots,\lambda_{d}\}\backslash\{\lambda_{j}\}$
satisfies the Brjuno condition for every $j\le d$. Then $F$ admits
Siegel hypersurfaces $D_{1},\ldots,D_{d}$ tangent to $\{z_{1}=0\},\cdots,\{z_{d}=0\}$
respectively.
\end{cor}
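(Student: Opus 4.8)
The plan is to produce, for each $j \le d$, an invariant hypersurface $D_j$ tangent to $\{z^j = 0\}$ on which $F$ is conjugate to an irrational rotation, by first moving to the coordinates supplied by Corollary~\ref{cor:NicerTail} and then invoking Pöschel's partial linearisation theorem. Fix $j$, and write $L_j = \{\lambda_1,\ldots,\lambda_d\} \setminus \{\lambda_j\}$. By hypothesis $L_j$ satisfies the Brjuno condition, hence by Lemma~\ref{lem:BrjunoImpliesAll} it also satisfies the partial Brjuno condition with respect to $(\lambda_1,\ldots,\lambda_d)$; that is, the set $M_j = \{\beta \in \mathbb{N}^d \mid \beta_j = 0\}$ is a Brjuno set for $F$. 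Classically (Pöschel, \cite{Poeschel1986Oninvariantmanifoldsofcomplexanalyticmappingsnearfixedpoints}) this guarantees an $F$-invariant complex submanifold $D_j$ tangent at the origin to the joint eigenspace of the multipliers in $L_j$ — that is, tangent to the coordinate hyperplane $\{z^j = 0\}$ — on which $F$ is analytically linearisable, so that $F|_{D_j}$ is conjugate to the linear map $\diag((\lambda_i)_{i \ne j})$, a rotation since every $|\lambda_i| = 1$. This $D_j$ is by definition a Siegel hypersurface.

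First I would verify that the coordinate hyperplane $\{z^j = 0\}$ is actually the relevant eigenspace: for a germ of the form (\ref{eq:FIntro}) the linear part is already diagonal, $dF_0 = \Lambda = \diag(\lambda_1,\ldots,\lambda_d)$, so the eigenspace associated to the eigenvalues $\{\lambda_i : i \ne j\}$ is precisely $\{z^j = 0\}$ (here one uses that the $\lambda_i$ are pairwise compatible with one-resonance, so no spurious coincidences of eigenvalues merge eigenspaces in a way that matters — and in any case Pöschel's statement only needs the partial Brjuno condition on the chosen subset, which we have). Next I would state precisely the version of Pöschel's theorem being used: given a germ at a fixed point whose linear part is diagonal and a subset of the spectrum satisfying the partial Brjuno condition, there is an invariant analytic submanifold tangent to the corresponding coordinate subspace on which the germ is linearised. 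Applying this to each $j = 1,\ldots,d$ in turn yields $D_1,\ldots,D_d$ with the asserted tangency, and on each the action is the rotation by the remaining multipliers.

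There is, however, a subtlety worth addressing explicitly rather than glossing, and this is the main point where care is needed. Pöschel's original formulation (as noted in the paragraph preceding the proof of Theorem~\ref{thm:GeneralElimination} in this paper) assumes $\min_{1 \le i \le d} |\lambda_i| \le 1$, which here holds trivially since all $|\lambda_i| = 1$; so that obstruction is absent. The remaining concern is whether resonances among the $\lambda_i$ with $i \ne j$ obstruct linearisation \emph{on} $D_j$ — but one-resonance of index $(1,\ldots,1)$ says the only resonance relations involve all coordinates with equal exponents, hence none survives upon restriction to $\{z^j = 0\}$ where $\beta_j = 0$; thus $F|_{D_j}$ is genuinely non-resonant and the Brjuno condition on $L_j$ delivers honest linearisation, not merely a normal form. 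Alternatively — and this is the cleaner route given what has already been set up — one can bypass the external citation: apply Theorem~\ref{thm:IteratedEliminationSimple} (or directly Theorem~\ref{thm:GeneralElimination}) with $A_0 = \{|\beta| \le 1\}$ and $A = M_j \setminus A_0 = \{\beta : \beta_j = 0, |\beta| \ge 2\}$, whose Brjuno-set property is exactly the partial Brjuno condition furnished by Lemma~\ref{lem:BrjunoImpliesAll}; Conditions~(\ref{enu:Elim1}) and (\ref{enu:Elim2}) are immediate since $A_0$ carries only the linear part. The resulting conjugated germ $G$ has $g_\beta = 0$ for all $\beta$ with $\beta_j = 0$ and $|\beta| \ge 2$, which is to say $\{z^j = 0\}$ is $G$-invariant and $G$ restricted to it is exactly linear; pulling back by the conjugacy gives $D_j$. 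I expect the write-up to be short, with the only real work being the bookkeeping that verifies the hypotheses of the elimination theorem for the set $M_j$ and the observation that eliminating all monomials with $\beta_j = 0$ makes $\{z^j = 0\}$ invariant with linear restriction.
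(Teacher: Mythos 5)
Your proposal is correct, but it takes a genuinely different route from the paper. The paper's proof is extremely short: it invokes Corollary~\ref{cor:NicerTail} once to produce a single coordinate change $\chi(z) = z + O(\norm z^{l})$ conjugating $F$ to $G(z) = F_{\mathrm{N}}(z) + O(z^{l\alpha})$ (with $\alpha = (1,\dots,1)$), and then observes that since every nonlinear monomial of $G$ is divisible by $z^{k\alpha}$ or $z^{l\alpha}$, each coordinate hyperplane $\{z^{j}=0\}$ is $G$-invariant with $G$ acting linearly there; setting $D_{j} := \{\chi_{j}(z)=0\}$ gives all $d$ Siegel hypersurfaces from one conjugation. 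You instead go coordinate by coordinate, using Lemma~\ref{lem:BrjunoImpliesAll} to get the partial Brjuno condition for $M_{j}$ and then either citing Pöschel's theorem directly or re-deriving it via Theorem~\ref{thm:GeneralElimination} with $A_{0} = \{|\beta|\le 1\}$ and $A = M_{j}\setminus A_{0}$; your verification that Condition~(\ref{enu:Elim2}) is vacuous (no $\beta$ with $|\beta|\ge 2$ in $A_{0}$) and that the conjugated germ restricted to $\{z^{j}=0\}$ is exactly $\Lambda$ is sound. The trade-offs: the paper's approach leans on the heavier Corollary~\ref{cor:NicerTail} (which the paper needs later in Section~\ref{sec:Stable-Orbits} anyway, so it is economical) and yields one common system of coordinates for all hypersurfaces; your approach is closer to the classical Pöschel argument, requires only the base case of the elimination theorem with the simplest possible partition, but needs a separate conjugation for each $j$. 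One small point: your opening sentence announces ``first moving to the coordinates supplied by Corollary~\ref{cor:NicerTail} and then invoking Pöschel,'' but this is redundant — once you are in the coordinates of Corollary~\ref{cor:NicerTail} no further appeal to Pöschel is needed (that \emph{is} the paper's proof), and your actual argument does not use Corollary~\ref{cor:NicerTail} at all. Dropping that clause would make the write-up cleaner.
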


\begin{proof}
Let $(\chi_{1},\ldots,\chi_{d})(z)=\chi(z)=z+O(\norm z^{l})$ be as
in Corollary~\ref{cor:NicerTail}. Then on $D_{j}:=\{\chi_{j}(z)=0\}$
for $j=1,\ldots,d$, $F$ acts as the irrational rotation $w\mapsto\Lambda w$.
\end{proof}
In fact, using Corollary~\ref{cor:NicerTail}, we can extend the
proof of \cite[Lemma~2.5]{BracciRaissyStensonesAutomorphismsofmathbbCkwithaninvariantnonrecurrentattractingFatoucomponentbiholomorphictomathbbCtimesmathbbCk1}
to classify all stable orbits near the origin:
\begin{prop}
\label{prop:stableOrbitBehaviour}Let $F$ and $B_{1},\ldots,B_{k-1}$
be as in Theorem~\ref{thm:BZ} such that $\{\lambda_{1},\ldots,\lambda_{d}\}\backslash\{\lambda_{j}\}$
satisfies the Brjuno condition for every $j\le d$. For $z\in\mathbb{C}^{d}$
let $z_{n}:=F^{\circ n}(z)$ and $u_{n}:=z_{n}^{\alpha}$. Then there
exists $r>0$ such that: if $z_{n}\in B_{r}(0)$ eventually, then
either $\{z_{n}\}_{n}$ is contained in one of the Siegel hypersurfaces
$D_{1},\ldots,D_{d}$, or $z_{n}\to0$ and there exists a unique $h\in\{0,\ldots,k-1\}$
such that
\begin{enumerate}
\item \label{enu:OB1un}$u_{n}\sim e^{2\pi ih/k}n^{-\expfrac 1k}$ (i.e.\ $\lim_{n\to\infty}\sqrt[k]{n}u_{n}=e^{2\pi ih/k}$),
in particular, $|u_{n}|\sim n^{-1/k}$,
\item \label{enu:OB2zn}$|z_{n}^{j}|\approx n^{-\expfrac 1{kd}}$ for $j=1,\ldots,d$,
\item \label{enu:OB3Bh}for every $R>0,\theta\in(0,\nicefrac{\pi}{2k}),$
and $\beta\in(0,1/d)$ with $\beta(l+1)>1$, we have $z_{n}\in B_{h}(R,\theta,\beta)$
eventually (in particular, $z_{n}\in B_{h}$ eventually).
\item \label{enu:OB4uniformity}The upper bounds $|u_{n}|\le n^{-1/k}(1+o(1))$
and $|z_{n}^{j}|=O(n^{-\expfrac 1{kd}})$ in (\ref{enu:OB1un}) and
(\ref{enu:OB2zn}) are uniform in $B_{h}$.
\end{enumerate}
\end{prop}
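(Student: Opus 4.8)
The plan is to work with the coordinates from Corollary~\ref{cor:NicerTail}, so I may assume $F(z)=F_{\mathrm N}(z)+O(z^{l\alpha})$, and to track the scalar orbit $u_n=z_n^\alpha=z_n^1\cdots z_n^d$ together with the individual components. The starting point is the transformation law for $u$ under $F$: multiplying the $d$ component equations of $F_{\mathrm N}$ shows $u\mapsto u(1-u^k)^d(1+\cdots)=u(1-d\,u^k/(d\cdot 1)+\cdots)=u(1-u^k)+O(u^{2k+1})$, and the correction from the $O(z^{l\alpha})$ tail contributes $O(u^{l})$ since $z^{l\alpha}=u^l$. So $u_{n+1}=u_n-u_n^{k+1}+O(u_n^{l})+O(u_n^{2k+1})$, which is a one-dimensional parabolic map of multiplicity $k$. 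The first step is therefore: suppose $z_n\in B_r(0)$ eventually. Either $z_n\in D_j$ for some $j$ (in which case $F$ acts as the rotation $\Lambda$ there and we are in the first alternative), or not. In the latter case I want to show $u_n\to0$. This is where the main work lies, and I expect it to be \emph{the main obstacle}: a priori $u_n$ could stay bounded away from $0$ (e.g. if the orbit shadows one of the Siegel hypersurfaces without lying on it), so one must rule that out. Following \cite[Lemma~2.5]{BracciRaissyStensonesAutomorphismsofmathbbCkwithaninvariantnonrecurrentattractingFatoucomponentbiholomorphictomathbbCtimesmathbbCk1}, the idea is that stability of $\{z_n\}$ forces each $|z_n^j|$ to stay bounded, hence (if the orbit is not eventually in some $D_j$) the product $|u_n|$ must actually decrease, because the "rotational" part $\Lambda$ is an isometry and the only systematic effect on moduli is the contracting factor $|1-u_n^k/(kd)|^d<1$; one shows via the partial-Brjuno/Siegel-hypersurface dichotomy that if $u_n\not\to0$ then some $|z_n^j|\to 0$ exponentially fast, which (combined with the other components staying bounded) is incompatible with $z_n\notin D_j$ unless in fact the orbit converges to $D_j$, and a normal-families/stability argument then pins it onto $D_j$.

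Once $u_n\to0$, the orbit enters the parabolic regime of the scalar map $u\mapsto u-u^{k+1}+O(u^l)$ with $l>2kd+1>2k+1$, so by the classical Leau–Fatou theory (in the form used in \cite{BracciZaitsev2013Dynamicsofoneresonantbiholomorphisms}) $u_n$ enters exactly one attracting petal, giving a unique $h\in\{0,\dots,k-1\}$ with $\arg u_n\to 2\pi h/k$ and the asymptotic $u_n\sim e^{2\pi i h/k} n^{-1/k}$; this is item~(\ref{enu:OB1un}). Concretely, $U_n:=u_n^{-k}$ satisfies $U_{n+1}=U_n+1+O(U_n^{-(l-k)/k})=U_n+1+o(1)$, so $U_n/n\to1$, which unwinds to the stated $k$-th root asymptotic for $u_n$; the uniformity in item~(\ref{enu:OB4uniformity}) comes from the fact that the $o(1)$ and error terms are uniform on $B_h$, exactly as in Theorem~\ref{thm:BZ}.

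For item~(\ref{enu:OB2zn}), the point is that the individual $|z_n^j|$ are all comparable to $|u_n|^{1/d}\approx n^{-1/(kd)}$. The upper bound $|z_n^j|=O(n^{-1/(kd)})$ I expect to get from the containment $z_n\in W(\beta)$ (so $|z_n^j|<|u_n|^\beta$) bootstrapped: once in $B_h$, one has $|z_n^j|\lesssim |u_n|^\beta$ for $\beta$ close to $1/d$, and iterating the component recursion $z_{n+1}^j=\lambda_j z_n^j(1-u_n^k/(kd))(1+O(\cdot))$ shows the product constraint forces each factor down to the order $|u_n|^{1/d}$; more precisely $\log|z_n^j|$ satisfies a telescoping estimate whose dominant term is $\frac1d\log|u_n|$ since $\sum_j\log|z_n^j|=\log|u_n|$ and the discrepancies $\log(|z_n^j|/|u_n|^{1/d})$ stay bounded because they only change by the (summable in a suitable sense) tail corrections — this mirrors the argument for the $k=1$ case and uses that the orbit is trapped in $W(\beta)$. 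Finally item~(\ref{enu:OB3Bh}) is essentially a repackaging: once $u_n\in S_h(R,\theta)$ eventually (immediate from (\ref{enu:OB1un}): $\arg u_n\to 2\pi h/k$ and $|u_n|\to0$ so $|u_n^k-\tfrac1{2R}|<\tfrac1{2R}$ eventually) and $|z_n^j|<|u_n|^\beta$ eventually (from (\ref{enu:OB2zn}) against $|u_n|\approx n^{-1/k}$, using $\beta(l+1)>1$ to absorb the tail into the $W(\beta)$ condition), we get $z_n\in B_h(R,\theta,\beta)=\{z\in W(\beta)\mid \pi(z)\in S_h(R,\theta)\}$ eventually, and uniqueness of $h$ is inherited from uniqueness of the petal. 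The only genuinely delicate ingredient is the first step — establishing the dichotomy $z_n\to0$ or $\{z_n\}\subseteq D_j$ — and I would pattern it closely on the original argument in \cite{BracciRaissyStensonesAutomorphismsofmathbbCkwithaninvariantnonrecurrentattractingFatoucomponentbiholomorphictomathbbCtimesmathbbCk1}, with the extra room afforded by the sharper normal form of Corollary~\ref{cor:NicerTail} (tail $O(z^{l\alpha})$ rather than $O(\|z\|^l)$) making the small-divisor bookkeeping go through.
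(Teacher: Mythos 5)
Your outline matches the paper's structure at a high level: pass to the normal form $F = F_{\mathrm N} + O(z^{l\alpha})$, split on whether the orbit lies in a Siegel hypersurface $D_j$, run a parabolic analysis on $U_n = u_n^{-k}$, then estimate the individual $|z_n^j|$. But the step you single out as ``the main obstacle'' is resolved in the paper by a short calculation, and the mechanism you propose there is incorrect. Your argument for $u_n \to 0$ hinges on $|1 - u_n^k/(kd)|^d < 1$ being a universal contracting factor; this is false in general, since it depends on $\arg(u_n^k)$ and fails when $u_n^k$ lies near the negative real axis. At this stage you have not yet placed $u_n$ in an attracting petal, so you cannot assume that sign condition, and the moduli-decrease argument collapses. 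The appeal to a ``partial-Brjuno/Siegel-hypersurface dichotomy'' plus normal families is not something the paper uses and does not patch this. (Also, the ``shadowing'' scenario you worry about is not an obstruction: if some $z_n^j$ is small then $u_n = z_n^1\cdots z_n^d$ is small, not bounded away from $0$.)

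The actual mechanism is simpler and makes essential use of the sharpened tail. Since $F - F_{\mathrm N} = O(z^{l\alpha}) = O(u^l)$, it vanishes on each $D_j$, and $u\circ F/u$ extends holomorphically and nonvanishingly across $\{u=0\}$; hence either $u_n = 0$ for all $n$ (orbit in some $D_j$) or $u_n \neq 0$ for all $n$. In the latter case the key observation is that for $z_n \in B_r(0)$ one has $|z_n^j| > |u_n|/r^{d-1}$ (the other factors are bounded by $r$), so the tail contributes $O(|u_n|^{l-1})$ to $u_{n+1}/u_n$, \emph{uniformly in} $\arg u_n$. Feeding this into $U_n = u_n^{-k}$ gives $|U_{n+1}-U_n-1| \le C(|u_n|^k + |u_n|^{l-1-k}) < \tfrac12$ for $r$ small, regardless of which petal (if any) $u_n$ is near; hence $\Re U_n$ increases by at least $\tfrac12$ each step and $u_n \to 0$ automatically. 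Only afterwards does the Leau--Fatou petal picture enter to select the unique $h$. Finally, for item~(\ref{enu:OB2zn}) the paper does not bootstrap from the $W(\beta)$ constraint (and you cannot start from $\beta$ close to $1/d$, since the basin is defined with a fixed $\beta_0 < 1/d$); it writes an explicit telescoping formula for $z_n^j$ as a sum involving $\prod_i(1-u_i^k/(kd))$ and the tail remainders, and estimates each factor directly using $|u_n|\sim n^{-1/k}$.
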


\begin{rem}
In particular, Part~(\ref{enu:OB4uniformity}) shows that ${\pbrace{F^{n}}}_{n\in\mathbb{N}}$
is normal on each local basin $B_{h}$, $h=0,\ldots,k-1$, hence $B_{h}$
is contained in a Fatou component for $F$.
\end{rem}

In the proof, we will use the following result of \cite[Lemma~5.3]{BracciRaissyStensonesAutomorphismsofmathbbCkwithaninvariantnonrecurrentattractingFatoucomponentbiholomorphictomathbbCtimesmathbbCk1}:
\begin{lem}
\label{lem:CoordsAndBasins}Let $\beta\in(0,1/d)$ and $l\in\mathbb{N}$
such that $\beta(l+1)>1$. For every germ of biholomorphisms $\chi\in\Aut(\mathbb{C}^{d},0)$
with $\chi(z)=z+O(\norm z^{l})$ and every $\beta'\in(0,\beta)$,
there exists $\varepsilon>0$ such that
\[
\chi(W(\beta)\cap\{\norm z<\varepsilon\})\subseteq W(\beta').
\]
\begin{proof}[Proof of Proposition~\ref{prop:stableOrbitBehaviour}]
First assume
\[
F(z)=F_{{\rm N}}(z)+O(z^{l\alpha}).
\]
Then $u_{n}=0$ for some $n\in\mathbb{N}$, if and only if $z_{n}\in D_{j}=\{z_{j}=0\}$
for some $j$ and hence the whole orbit is contained in $D_{j}$.
Now assume $u_{n}\neq0$ for all $n\in\mathbb{N}$ and we can define
$U:=u^{-k}$ and $U_{n}:=u_{n}^{-k}$. Then
\begin{equation}
U_{n+1}=U_{n}+1+O(U_{n}^{-1},U_{n}^{1-(l-1)/k})\quad\text{for all }n\in\mathbb{N}.\label{eq:Un+1}
\end{equation}
Since $l>2k+1$, there exists $r>0$ such that for $z_{n}\in B_{r}(0)$,
we have
\[
|U_{n+1}-U_{n}-1|<\frac{1}{2}.
\]
So whenever $z_{n}\in B_{r}(0)$ eventually, we have $|U_{n}|\to\infty$.
Hence in this case (\ref{eq:Un+1}) shows that for any $c>1$ there
exists $n_{c}\in\mathbb{N}$ such that $|U_{n+1}-U_{n}-1|<(c-1)/c$
for all $n\ge n_{c}$. By induction for all $n\ge n_{c}$, we have
\begin{equation}
\Re U_{n}\ge\Re U_{n_{c}}+\frac{n-n_{c}}{c}\label{eq:ReUnLowerBound}
\end{equation}
and
\[
|U_{n}|\le|U_{n_{c}}|+c(n-n_{c}).
\]
 For $c\searrow1$, it follows
\begin{equation}
\lim_{n\to\infty}\frac{\Re U_{n}}{n}=\lim_{n\to\infty}\frac{|U_{n}|}{n}=1,\label{eq:LimitsUk}
\end{equation}
hence
\begin{equation}
\lim_{n\to\infty}nu_{n}^{k}=1.\label{eq:Limitsuk}
\end{equation}
Fix $j\in\{1,\ldots,d\}$. By induction on $n\ge1$ we have
\begin{equation}
z_{n}^{j}=z^{j}\lambda_{j}^{n}\prod_{i=0}^{n-1}\paren[{\Big}]{1-\frac{u_{i}^{k\alpha}}{kd}}+\sum_{i=0}^{n-1}R^{j}(z_{i})\prod_{\nu=i+1}^{n-1}\lambda_{j}\paren[{\Big}]{1-\frac{u_{\nu}^{k\alpha}}{kd}},\label{eq:znj-expression}
\end{equation}
where $R^{j}(z)=O(z^{\alpha l})=O(u^{l})$. From (\ref{eq:Limitsuk}),
it follows that as $i\to\infty$, we have
\[
\log\paren[{\Big}]{1-\frac{u_{i}^{k}}{kd}}\sim-\frac{u_{i}^{k}}{kd}\sim-\frac{1}{ikd}.
\]
Therefore as $n\to\infty$, we have
\begin{align*}
\prod_{i=0}^{n-1}\paren[{\Big}]{1-\frac{u_{i}^{k}}{kd}} & =\exp\paren[{\Big}]{\sum_{i=0}^{n-1}\log{{\paren[{\Big}]{1-\frac{u_{i}^{k}}{kd}}}}}\\
 & \approx\exp\paren[{\Big}]{-\frac{1}{kd}\sum_{i=0}^{n-1}\frac{1}{i}}\\
 & \approx n^{-\expfrac 1{kd}}
\end{align*}
and
\[
\prod_{\nu=i+1}^{n-1}\paren[{\Big}]{1-\frac{u_{\nu}^{k}}{kd}}\approx\frac{i^{\expfrac 1{kd}}}{n^{\expfrac 1{kd}}}.
\]
With this and since $R^{j}(z_{i})=O(u_{i}^{l})=O(i^{-l/k})$, (\ref{eq:znj-expression})
implies
\begin{align*}
|z_{n}^{j}| & \approx n^{-\expfrac 1{kd}}\abs[{\Big}]{1+\sum_{i=0}^{n-1}O(i^{\expfrac 1{kd}-\expfrac lk})}\approx n^{-\expfrac 1{kd}},
\end{align*}
since $l>k+1$ and hence $\frac{1}{kd}-\frac{l}{k}<\frac{1}{kd}-1-\frac{1}{k}<-1$.
This proves Part~(\ref{enu:OB2zn}).
\end{proof}
\end{lem}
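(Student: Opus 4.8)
The plan is to verify the inclusion by a direct perturbation estimate. Since $\chi$ is a germ with $\chi(z)=z+O(\norm z^{l})$, fix $C>0$ and $\delta_{0}>0$ so that, writing $\chi(z)=z+\rho(z)$ and using $\norm z=\max_{j}\abs{z^{j}}$, one has $\abs{\rho^{j}(z)}\le C\norm z^{l}$ for all $j\le d$ whenever $\norm z<\delta_{0}$. If $d=1$, then $\pi=\id$ and $W(\beta)\cap\{\norm z<1\}=\{\,0<\norm z<1\,\}$, so the claim is immediate; hence assume $d\ge2$ (so that $\beta<1/d\le 1/2$ and $\beta(l+1)>1$ force $l\ge2$). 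Note that every $z\in W(\beta)$ has all $z^{j}\neq0$ and $\pi(z)\neq0$, satisfies $\norm z<\abs{\pi(z)}^{\beta}$ (each $\abs{z^{j}}<\abs{\pi(z)}^{\beta}$), and hence $\abs{\pi(z)}\le\norm z^{d}\to0$ as $z\to0$ inside $W(\beta)$; I abbreviate $t:=\abs{\pi(z)}\in(0,1)$ for such $z$.

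First I would establish two estimates for $z\in W(\beta)$ with $\norm z<\delta_{0}$. On $W(\beta)$ the tail bound sharpens to $\norm z^{l}<\abs{\pi(z)}^{\beta l}=t^{\beta l}$, so for every $j$
\[
\abs{\chi^{j}(z)}\le\abs{z^{j}}+C\norm z^{l}<t^{\beta}\paren*{1+Ct^{\beta(l-1)}}.
\]
For the product, expanding $\pi(\chi(z))=\prod_{j}(z^{j}+\rho^{j}(z))$ gives $\pi(\chi(z))-\pi(z)=\sum_{\emptyset\neq S\subseteq\{1,\dots,d\}}\bigl(\prod_{j\in S}\rho^{j}(z)\bigr)\bigl(\prod_{j\notin S}z^{j}\bigr)$, and each summand with $\abs S=s$ is bounded by $(C\norm z^{l})^{s}\prod_{j\notin S}\abs{z^{j}}<C^{s}t^{\beta ls}\,t^{\beta(d-s)}=C^{s}t^{\beta((l-1)s+d)}\le C^{s}t^{\beta(l+d-1)}$ (using $s\ge1$, $l\ge2$, $t<1$). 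Summing over the finitely many $S$ gives a constant $C'$, independent of $z$, with $\abs{\pi(\chi(z))-\pi(z)}\le C't^{\beta(l+d-1)}$, whence
\[
\abs{\pi(\chi(z))}\ge t\paren*{1-C't^{\beta(l+d-1)-1}},
\]
and the exponent here satisfies $\beta(l+d-1)-1\ge\beta(l+1)-1>0$ by hypothesis, so the parenthesis is positive and tends to $1$ as $z\to0$ in $W(\beta)$.

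To conclude, since $x\mapsto x^{\beta'}$ is increasing on $[0,\infty)$, it suffices for $\chi(z)\in W(\beta')$ to have $\abs{\chi^{j}(z)}<\abs{\pi(\chi(z))}^{\beta'}$ for all $j$, and by the two displays this holds whenever
\[
t^{\beta}\paren*{1+Ct^{\beta(l-1)}}<t^{\beta'}\paren*{1-C't^{\beta(l+d-1)-1}}^{\beta'},
\]
that is, after dividing by $t^{\beta'}$, whenever $t^{\beta-\beta'}\bigl(1+Ct^{\beta(l-1)}\bigr)\bigl(1-C't^{\beta(l+d-1)-1}\bigr)^{-\beta'}<1$. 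As $z\to0$ in $W(\beta)$ one has $t=\abs{\pi(z)}\le\norm z^{d}\to0$, so $t^{\beta-\beta'}\to0$ (because $\beta-\beta'>0$) while the remaining factor tends to $1$; hence there exists $\varepsilon\in(0,\delta_{0}]$ for which the inequality holds for all $z\in W(\beta)$ with $\norm z<\varepsilon$. For each such $z$ one obtains $\abs{\chi^{j}(z)}<\abs{\pi(\chi(z))}^{\beta'}$ for every $j$ (in particular $\pi(\chi(z))\neq0$), i.e.\ $\chi(z)\in W(\beta')$, as required.

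I do not expect a serious obstacle, as this is a quantitative perturbation lemma, but the exponent bookkeeping must be handled carefully. The two points that must come out right are: on $W(\beta)$ the tail $\norm z^{l}$ may be replaced by the smaller quantity $\abs{\pi(z)}^{\beta l}$, which is what turns every error term into a positive power of $\abs{\pi(z)}$; and the hypothesis $\beta(l+1)>1$ (with $d\ge2$) makes $\beta(l+d-1)-1>0$, so that $\pi\circ\chi$ cannot contract faster than $t^{\beta(l+d-1)}$ and the lower bound for $\abs{\pi(\chi(z))}$ survives — after which the slack $\beta-\beta'>0$ absorbs all remaining discrepancies.
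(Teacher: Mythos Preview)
Your proof is correct. Note, however, that the paper does not actually prove this lemma: it is quoted as \cite[Lemma~5.3]{BracciRaissyStensonesAutomorphismsofmathbbCkwithaninvariantnonrecurrentattractingFatoucomponentbiholomorphictomathbbCtimesmathbbCk1}, and the proof environment nested inside the lemma in the paper's source is in fact the proof of Proposition~\ref{prop:stableOrbitBehaviour} (the classification of stable orbits), not of the lemma itself. Your direct argument---bounding each $\abs{\chi^{j}(z)}$ from above and $\abs{\pi(\chi(z))}$ from below by powers of $t=\abs{\pi(z)}$ on $W(\beta)$, then using the slack $\beta-\beta'>0$ to absorb the multiplicative errors---is exactly the natural perturbation estimate, and the exponent bookkeeping (in particular the observation that $\beta(l+d-1)-1\ge\beta(l+1)-1>0$ once $d\ge2$, and that $\beta(l+1)>1$ with $\beta<1/d\le1/2$ forces $l\ge2$) is handled correctly.
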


\begin{proof}
Let now $R>0,\theta\in(0,\nicefrac{\pi}{2k}),\beta\in(0,1/d)$. Then
for $n\to\infty$, Part~(\ref{enu:OB2zn}) implies
\[
|z_{n}^{j}|\approx n^{-\expfrac 1{kd}}\sim|u_{n}|^{1/d}=o(|u_{n}|^{\beta}),
\]
so $z_{n}\in W(\beta)$ eventually, and by (\ref{eq:LimitsUk}) we
have $U_{n}\in H(R,\theta)$ for large enough $n$. In particular
for $(R,\theta,\beta)=(R_{0},\theta_{0},\beta_{0})$, this means $z_{n}\in B_{0}\cup\cdots\cup B_{k-1}$
eventually, but each $B_{h}$ is $F$-invariant by Theorem~\ref{thm:BZ},
so $z_{n}$ stays in one unique $B_{h}$. Hence $u_{n}$ stays in
the image of the unique branch of the $k$-th root centred around
$\exp\paren [\big]{\frac{2\pi ih}{k}}$, implying Part~(\ref{enu:OB3Bh}),
and we can extract the $k$-th root from (\ref{eq:Limitsuk}) to get
Part~(\ref{enu:OB1un}).

To show Part~(\ref{enu:OB4uniformity}), we recall that in the proof
of \cite[Theorem~1.1]{BracciZaitsev2013Dynamicsofoneresonantbiholomorphisms},
that implies Theorem~\ref{thm:BZ}, $R_{0}$, $\theta_{0}$, and
$\beta_{0}$ are chosen such that
\[
|U_{1}-U-1|<\frac{1}{2}\quad\text{for all }U\in H(R_{0},\theta_{0}).
\]
Hence in (\ref{eq:ReUnLowerBound}), $n_{c}$ can be chosen in a uniform
manner and, since $\Re U_{n_{c}}>R_{0}$, we get uniform lower bounds
on $|U_{n}|>\Re U_{n}$. This becomes a uniform upper bound on the
convergence in (\ref{eq:Limitsuk}) and the subsequent estimates on
$|z_{n}|$.

For general $F$, Corollary~\ref{cor:NicerTail} shows that $F$
is locally conjugate to $z\mapsto F_{{\rm N}}(z)+O(z^{l\alpha})$
via a change of coordinates of the form $\chi(z)=z+O(\norm z^{l})$.
This clearly preserves Part~(\ref{enu:OB1un}) and (\ref{enu:OB2zn}),
and by Lemma~\ref{lem:CoordsAndBasins} $\chi$ preserves Part~(\ref{enu:OB3Bh})
and (\ref{enu:OB4uniformity}) as well.
\end{proof}
\begin{rem}
\label{rem:stableOrbitBehaviourInBasins}Without the Brjuno condition
on subsets, if $z\in\mathbb{C}^{d}$ is such that $z_{n}\to0$ and
$z_{n}\in W(\beta_{1})$ eventually for some $\beta_{1}\in(0,1/d)$
such that $\beta_{1}(l+d-1)>k+1$, we have
\[
U_{n+1}=U_{n}+1+O(U_{n}^{-1},U_{n}^{1-\frac{\beta_{1}(l+d-1)-1}{k}})
\]
with $1-\frac{\beta_{1}(l+d-1)-1}{k}<0$ and Part~(\ref{enu:OB1un})
through (\ref{enu:OB4uniformity}) of Proposition~\ref{prop:stableOrbitBehaviour}
still follow for these orbits in the same manner (cf.\ \cite[Lemma~2.5]{BracciRaissyStensonesAutomorphismsofmathbbCkwithaninvariantnonrecurrentattractingFatoucomponentbiholomorphictomathbbCtimesmathbbCk1}).
However, in this case we have not been able to determine the containing
Fatou components, as both the methods of the next section and of \cite[Section~5]{BracciRaissyStensonesAutomorphismsofmathbbCkwithaninvariantnonrecurrentattractingFatoucomponentbiholomorphictomathbbCtimesmathbbCk1}
rely on the Brjuno condition on subsets (cf.\ remarks in \cite[Section~1]{BracciRaissyStensonesAutomorphismsofmathbbCkwithaninvariantnonrecurrentattractingFatoucomponentbiholomorphictomathbbCtimesmathbbCk1}).
\end{rem}

\subsection{Global basins are Fatou components}

We show that the global basins corresponding to our local basins are
Fatou components and conclude the proof of the first part of Theorem~\ref{thm:FatouCCstarMulti}.
\begin{defn}
Let $F\in\Aut(\mathbb{C}^{d})$ be as in Theorem~\ref{thm:BZ}. Then
for $h=0,\ldots,k-1$, the \emph{global basin} corresponding to the
local basin $B_{h}$ is
\[
\Omega_{h}:=\bigcup_{n\in\mathbb{N}}F^{-n}(B_{h})
\]
and contains all points $z\in\mathbb{C}^{d}$ such that $F^{n}(z)\in B_{h}$
eventually.
\end{defn}

\begin{rem}
\label{rem:GlobalBasinProperties}The global basins $\Omega_{0},\ldots,\Omega_{k-1}$
are growing unions of preimages of $B_{0},\ldots,B_{k-1}$. As such
they are still pairwise disjoint, open, invariant and locally uniformly
attracted to $0$ under $F$. In particular However, unless $F$ is
a global automorphism, they may no longer be connected.
\end{rem}

\begin{cor}
\label{thm:IfPoeschelThenFatou}Let $F\in\Aut(\mathbb{C}^{d})$ as
in Theorem~\ref{thm:BZ} such that $\{\lambda_{1},\ldots,\lambda_{d}\}\backslash\{\lambda_{j}\}$
satisfies the Brjuno condition for each $j=1,\ldots,d$. Then the
connected components of $\Omega_{0},\ldots,\Omega_{k-1}$ are Fatou
components.
\end{cor}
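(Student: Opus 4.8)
The plan is to deduce this corollary from Proposition~\ref{prop:stableOrbitBehaviour} together with the standard fact that an open set on which the iterate family is normal and which is completely determined by an eventual-landing condition must be a union of Fatou components. First I would observe that by definition $z \in \Omega_h$ if and only if $F^n(z) \in B_h$ eventually, so in particular every $z \in \Omega_h$ satisfies $z_n \in B_r(0)$ eventually for the $r$ coming from Proposition~\ref{prop:stableOrbitBehaviour}; since $z_n \in B_h$ means $u_n \neq 0$, the orbit is not contained in any $D_j$, so Part~(\ref{enu:OB4uniformity}) applies and gives $z_n \to 0$ with the uniform bounds $|u_n| \le n^{-1/k}(1+o(1))$ and $|z_n^j| = O(n^{-1/(kd)})$ \emph{uniform on $B_h$}.

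The key step is to upgrade this to normality of $\{F^n\}$ on all of $\Omega_h$, not just on $B_h$. For this I would argue as follows: let $K \subseteq \Omega_h$ be compact. By definition of $\Omega_h$ as a growing union $\bigcup_n F^{-n}(B_h)$ and openness of each $F^{-n}(B_h)$, compactness gives a single $n_0$ with $K \subseteq F^{-n_0}(B_h)$, i.e.\ $F^{n_0}(K) \subseteq B_h$. Then for $n \ge n_0$ we have $F^n(z) = F^{n-n_0}(F^{n_0}(z))$ with $F^{n_0}(z) \in B_h$, so the uniform estimates on $B_h$ from Part~(\ref{enu:OB4uniformity}) give $F^n(z) \to 0$ uniformly on $K$. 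Hence $F^n \to 0$ locally uniformly on $\Omega_h$, so $\{F^n\}$ is normal there and $\Omega_h \subseteq \mathcal{F}$. Therefore each connected component of $\Omega_h$ is contained in a single Fatou component $V$.

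Finally I would show the reverse inclusion: the Fatou component $V$ containing a connected component $\Omega$ of $\Omega_h$ cannot be strictly larger. Suppose $z \in V$. Since $V$ is connected and $\{F^n\}$ is normal on $V$, and since $F^n \to 0$ on the nonempty open subset $\Omega \subseteq V$, the limit of any locally uniformly convergent subsequence agrees on $\Omega$ with the constant $0$, hence by the identity principle equals $0$ on all of $V$; thus $F^n(z) \to 0$, so $z_n \in B_r(0)$ eventually. Applying Proposition~\ref{prop:stableOrbitBehaviour} to $z$: either $z_n$ lies in some Siegel hypersurface $D_j$ — impossible, since then $F$ acts on the orbit as the irrational rotation $w \mapsto \Lambda w$ and $z_n \not\to 0$ — or there is a \emph{unique} $h'$ with $z_n \in B_{h'}$ eventually, i.e.\ $z \in \Omega_{h'}$. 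Since $\Omega_0,\dots,\Omega_{k-1}$ are pairwise disjoint (Remark~\ref{rem:GlobalBasinProperties}) and $z$ is in the same Fatou component $V$ as points of $\Omega_h$, continuity of the eventual index (it is locally constant on $\mathcal F$, as $B_{h}$ are open and $F$-invariant) forces $h' = h$, so $z \in \Omega_h$, and in fact $z$ lies in the connected component $\Omega$. Hence $V = \Omega$.

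The main obstacle I anticipate is the very last point — rigorously arguing that the ``eventual basin index'' $h$ is locally constant on $\mathcal F \cap \Omega_h$, so that a whole Fatou component cannot straddle two different global basins or escape them entirely. This is handled by combining the disjointness and openness of the $\Omega_h$ with the fact that every point whose orbit stays near $0$ is classified by Proposition~\ref{prop:stableOrbitBehaviour}; once one knows $F^n \to 0$ on $V$ (via the identity-principle argument above), every point of $V$ falls into the classification and the disjointness pins down the index. The normality/convergence part is routine given Part~(\ref{enu:OB4uniformity}), so essentially all the work is in this bookkeeping.
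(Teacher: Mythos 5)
Your proposal is correct and follows essentially the same route as the paper: establish that $\Omega_h$ lies in the Fatou set (the paper delegates this to Remark~\ref{rem:GlobalBasinProperties}; you expand it via compactness), deduce $F^n\to 0$ on the ambient Fatou component $V$ via normality plus the identity principle, invoke Proposition~\ref{prop:stableOrbitBehaviour} to place every orbit of $V$ in some $\Omega_{h'}$, and then use connectedness to pin $V$ down to a single component of $\Omega_h$. One small simplification of your final step: the ``local constancy of the eventual index'' hedge is unnecessary — once Proposition~\ref{prop:stableOrbitBehaviour} puts $V\subseteq\Omega_0\cup\cdots\cup\Omega_{k-1}$, the fact that the $\Omega_{h'}$ are pairwise disjoint \emph{open} sets and $V$ is connected with $V\cap\Omega_h\supseteq C\neq\emptyset$ immediately forces $V\subseteq\Omega_h$, and hence $V$ is contained in (and therefore equals) the component $C$.
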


\begin{proof}
Let $h\in\{0,\ldots,k-1\}$. By Remark~\ref{rem:GlobalBasinProperties},
each connected component $C$ of $\Omega_{h}$ is contained in a Fatou
component $V$. By normality, for each $z\in V$, we have $z_{n}\to0$,
but by Proposition~\ref{prop:stableOrbitBehaviour} that means $z_{n}$
is contained in $B_{h}$ eventually, i.e.\ $z\in\Omega_{h}$. Therefore
$C\subseteq V\subseteq\Omega_{h}$ and since $V$ is connected, it
follows that $V=C$.
\end{proof}
The first part of Theorem~\ref{thm:FatouCCstarMulti} now follows
from Corollary~\ref{thm:IfPoeschelThenFatou} and Proposition~\ref{prop:stableOrbitBehaviour}.

\section{\label{sec:Internal-geometry}Internal dynamics and geometry}

In this section we fix $h\in\{0,\ldots,k-1\}$ and introduce two closely
related systems of coordinates compatible with the action of $F$
on the local basin. One allows us to study the behaviour of orbits
under $F$, and the other extends to a biholomorphism of the corresponding
global basin to $\mathbb{C}\times(\mathbb{C}^{*})^{d-1}$. Note that
we do not assume the Brjuno condition on subsets in this section.

\subsection{\label{subsec:Fatou-coordinates}Fatou coordinates}

We define special coordinates that codify the dynamics of $F$ on
$B_{h}$. The first coordinate $\psi$ is a generalisation of the
classical Fatou coordinate in one dimension that was introduced in
\cite[Prop.~4.3]{BracciRaissyZaitsev2013Dynamicsofmultiresonantbiholomorphisms}
and examined more precisely in \cite[Proposition~3.1 and Lemma~3.3]{BracciRaissyStensonesAutomorphismsofmathbbCkwithaninvariantnonrecurrentattractingFatoucomponentbiholomorphictomathbbCtimesmathbbCk1},
where the following is shown:
\begin{prop}
\label{prop:PsiDefAndInj}For $F$ and $B_{h}$ as in Theorem~\ref{thm:BZ},
there exists a holomorphic map $\psi:B_{h}\to\mathbb{C}^{*}$, $\Re\psi>0$
such that
\begin{align}
\psi\circ F & =\psi+1,\label{eq:FatouEq}
\end{align}
and a constant $c\in\mathbb{C}$ depending only on $F_{{\rm N}}$
such that
\begin{align}
\psi(z) & =U+c\log(U)+O(U^{-1})\label{eq:FatouError}
\end{align}
for $z\in B_{h}$ and $U=(z^{1}\cdots z^{d})^{-k}$.

Moreover, there exists $R_{1}>\max\{R_{0},1\}$, $0<\theta_{1}<\theta_{0}$,
and $\beta_{0}<\beta_{1}<1/d$ such that the holomorphic map
\[
(\psi,\id):B_{h}(R_{1},\theta_{1},\beta_{1})\to(\mathbb{C}^{*})^{d},\quad z\mapsto(\psi(z),z^{2},\ldots,z^{d})
\]
is injective.
\end{prop}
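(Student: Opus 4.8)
The plan is to reduce everything to one‑dimensional parabolic dynamics in the variable $U=(z^{1}\cdots z^{d})^{-k}$, on which $F$ acts like a perturbation of the translation by $1$, and then to carry over the classical construction of the parabolic Fatou coordinate, treating $z^{2},\ldots,z^{d}$ as parameters. Recording the dynamics of the resonant monomial $u=\pi(z)=z^{1}\cdots z^{d}$ from the proof of Theorem~\ref{thm:BZ} (cf.\ \cite{BracciZaitsev2013Dynamicsofoneresonantbiholomorphisms}): for $z\in B_{h}$ one has $u\mapsto u(1-\tfrac{u^{k}}{k}+O(u^{2k}))+O(\|z\|^{l})$, and since inside $W(\beta_{0})$ the hypothesis $\beta_{0}(l+d-1)>2k+1$ controls the tail, passing to $U=u^{-k}$ yields a recursion
\[
U_{n+1}=U_{n}+1+\frac{c}{U_{n}}+O(U_{n}^{-1-\epsilon})\qquad(z_{n}\in B_{h})
\]
for some $\epsilon>0$, with the coefficient $c\in\mathbb{C}$ determined by $F_{\mathrm N}$ alone (it comes only from the term $-\tfrac1{kd}$). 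Substituting $V:=U+c\log U$ absorbs the $c/U$ term, leaving $V_{n+1}=V_{n}+1+O(V_{n}^{-1-\epsilon})$, a summable perturbation of translation.

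I would then set $\psi(z):=\lim_{n\to\infty}(V_{n}-n)$. Convergence follows by telescoping, $V_{n}-n-V_{0}=\sum_{j=0}^{n-1}(V_{j+1}-V_{j}-1)$ with summands $O((V_{0}+j)^{-1-\epsilon})$; the convergence is locally uniform in $z$, so $\psi$ is holomorphic on $B_{h}$, and letting $n\to\infty$ in the same identity yields the asymptotic expansion $\psi(z)=U+c\log U+O(U^{-1})$, the error being controlled by the summable remainder above. The functional equation is immediate since $V_{n}\circ F=V_{n+1}$, so $\psi\circ F=\lim_{n}(V_{n+1}-n)=\psi+1$; and $\Re\psi>0$ on $B_{h}$ because $\Re U>R_{0}$, while $|\arg U|<k\theta_{0}<\tfrac\pi2$ makes $\Re(c\log U)=O(\log\Re U)$ negligible against $\Re U$ once $R_{0}$ is taken large (which is harmless, the sub‑basins still being uniform basins). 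In particular $\psi$ is nowhere zero, so it lands in $\mathbb{C}^{*}$.

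For injectivity of $(\psi,\id)$ I would shrink the basin: pick $R_{1}>\max\{R_{0},1\}$, $\theta_{1}<\theta_{0}$ and $\beta_{0}<\beta_{1}<1/d$ so that $B_{h}(R_{1},\theta_{1},\beta_{1})$ sits inside $B_{h}$ with room on every side (recall $W(\beta_{1})\subseteq W(\beta_{0})$ near $0$, since $|\pi(z)|<1$). Fixing $(z^{2},\ldots,z^{d})$, the map $z^{1}\mapsto u\mapsto U=u^{-k}$ is a biholomorphism onto a thin sectorial region at infinity, far from $0$, and $\psi$ restricted to the fibre becomes a holomorphic function $g(U)=U+c\log U+O(U^{-1})$ with the $O$ uniform in the fibre (being uniform on $B_{h}$). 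Since $g$ extends with the same expansion to the slightly larger fibre region coming from $B_{h}$, Cauchy estimates give $g'(U)=1+c/U+O(U^{-2})$, so $|g'-1|<1$ once $R_{1}$ is large; and since the segment between any two points of the shrunken fibre stays in the larger region where $g$ is holomorphic, the standard estimate $|U_{1}-U_{2}|=|g(U_{1})-g(U_{2})-(U_{1}-U_{2})|\le\sup|g'-1|\cdot|U_{1}-U_{2}|$ forces $U_{1}=U_{2}$ whenever $g(U_{1})=g(U_{2})$. Hence $g$, and with it $(\psi,\id)$, is injective on $B_{h}(R_{1},\theta_{1},\beta_{1})$.

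The algebra of the Fatou‑coordinate limit is the classical one‑dimensional parabolic theory applied to the orbit of $u$; the part I expect to be the main obstacle is the uniformity of all the error estimates over $B_{h}$, in particular as one runs toward its boundary where $z^{2}\cdots z^{d}\to0$, since the unknown tail $O(\|z\|^{l})$ must be tracked through both the $u$‑ and $U$‑dynamics — this is precisely where $\beta_{0}(l+d-1)>2k+1$ enters — together with the uniformity of the Cauchy estimates in the injectivity step, which is what dictates shrinking $R$, $\theta$ and $\beta$ simultaneously. This is essentially the content of \cite[Prop.~4.3]{BracciRaissyZaitsev2013Dynamicsofmultiresonantbiholomorphisms} and \cite[Proposition~3.1 and Lemma~3.3]{BracciRaissyStensonesAutomorphismsofmathbbCkwithaninvariantnonrecurrentattractingFatoucomponentbiholomorphictomathbbCtimesmathbbCk1}.
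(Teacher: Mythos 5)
Your construction matches the one actually used in the paper: the Fatou coordinate is defined as $\psi=\lim_n\psi_n$ with $\psi_n(z)=U_n-n+c\log U_n$, which is precisely your $V_n-n$ once signs are sorted out, and the paper delegates the verification to \cite[Prop.~4.3]{BracciRaissyZaitsev2013Dynamicsofmultiresonantbiholomorphisms} and \cite[Proposition~3.1, Lemma~3.3]{BracciRaissyStensonesAutomorphismsofmathbbCkwithaninvariantnonrecurrentattractingFatoucomponentbiholomorphictomathbbCtimesmathbbCk1}. So the outline is right, but two steps as written would not go through.

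First, the sign is inconsistent. If $U_{n+1}=U_n+1+\tfrac{c}{U_n}+O(U_n^{-1-\epsilon})$, then $\log U_{n+1}-\log U_n=\tfrac{1}{U_n}+O(U_n^{-2})$, so under $V:=U+c\log U$ one gets $V_{n+1}-V_n-1=\tfrac{2c}{U_n}+O$; the linear‑in‑$\tfrac1U$ term is doubled, not absorbed. The correct substitution is $V=U-c\log U$, or equivalently the recursion coefficient and the $\log$‑coefficient must carry opposite signs (the constant in the displayed expansion $\psi=U+c\log U+O(U^{-1})$ is minus the coefficient of $1/U_n$ in the recursion). With your identical $c$ in both places the telescoping sum does not converge at the claimed rate.

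Second, the injectivity step quietly assumes a convexity property that does not hold automatically. Over a fixed $z'=(z^2,\ldots,z^d)$, the fibre of $B_h(R,\theta,\beta)$ in the variable $U$ is $H(R,\theta)\cap\{A(z')<|U|<B(z')\}$, where $A(z')=|z^2\cdots z^d|^{-k/(1-\beta)}$ comes from the constraint $|z^1|<|u|^\beta$. For small $z'$ one has $A(z')\gg R$, and the exterior of a disk is not convex, so "the segment between any two points of the shrunken fibre stays in the larger region" is genuinely a claim in need of proof: a chord between two points of $\{|U|>A_1\}$ can dip below $A_1$, and one must show it stays above $A_0$. This can be arranged — the ratio $A_1(z')/A_0(z')=|z^2\cdots z^d|^{-k(\beta_1-\beta_0)/((1-\beta_0)(1-\beta_1))}$ grows as $z'\to0$, and the sector angle $k\theta_1<\pi/2$ limits how much a chord can lose in $|U|$ — but this quantitative comparison is the heart of the injectivity estimate and should not be asserted without verification; alternatively, an approach via Hurwitz's theorem on the sequence $(\psi_n,\id)$ (each manifestly injective) as the paper uses for the fuller coordinate system in Proposition~\ref{prop:FatouInjective} avoids the segment argument entirely.
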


The map $\psi$ is obtained as the uniform limit of the sequence $\{\psi_{n}\}_{n\in\mathbb{N}}$
of maps $B_{h}\to\mathbb{C}^{*}$ given by
\begin{align*}
\psi_{n}(z) & :=U_{n}-m+c\log(U_{n}),
\end{align*}
where $U_{n}=(z_{n}^{1}\cdots z_{n}^{d})^{-k}$ and $z_{n}=F^{n}(z)$
for $n\in\mathbb{N}$.
\begin{rem}
\label{rem:AsympPsi}In particular, $\psi(z)\sim U$ as $|U|\to\infty$
and by Proposition~\ref{prop:stableOrbitBehaviour} (and Remark~\ref{rem:stableOrbitBehaviourInBasins}),
$\psi(z_{n})\sim U_{n}$ as $n\to\infty$ uniformly in $z\in B_{h}$.
\end{rem}

\cite[Proposition~3.4]{BracciRaissyStensonesAutomorphismsofmathbbCkwithaninvariantnonrecurrentattractingFatoucomponentbiholomorphictomathbbCtimesmathbbCk1}
establishes further local coordinates to cover the remaining dimensions.
The following is a slight variation that will simplify the definition
of global coordinates in Section~\ref{subsec:Global-Basins}:
\begin{prop}
\label{prop:SigmaFatouExtra}Let $F$ and $B_{h}$ be as in Theorem~\ref{thm:BZ}
and $\psi:B_{h}\to\mathbb{C}^{*}$ as in Proposition~\ref{prop:PsiDefAndInj}.
For $j=2,\ldots,d$, there exists a holomorphic maps $\sigma_{j}:B_{h}\to\mathbb{C}^{*}$
such that
\begin{equation}
\sigma_{j}\circ F=\lambda_{j}\sigma_{j}\sqrt[kd]{\frac{\psi}{\psi+1}},\label{eq:SigmaEq}
\end{equation}
where the root is well-defined in the main branch since $\Re\psi>0$.
Moreover, for every $\alpha\in(1-\beta_{0},k)$ we have
\begin{equation}
\sigma_{j}(z)=z^{j}+O(u^{\alpha})\label{eq:SigmaError}
\end{equation}
for $z\in B_{h}$ and $u=z^{1}\cdots z^{d}$.
\end{prop}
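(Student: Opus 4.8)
The plan is to mimic the construction of $\psi$ in Proposition~\ref{prop:PsiDefAndInj}: define $\sigma_j$ as a uniform limit of explicitly corrected iterates, where the correction factor undoes the accumulated multiplicative error along the orbit. Recalling from Remark~\ref{rem:HomotopyType} that on $B_h$ the product $u=z^1\cdots z^d$ lies in the attracting sector $S_h$ and $|u_n|\sim n^{-1/k}$, the key computation is the evolution of $z^j$ under $F$. From the normal form~(\ref{eq:FNormalForm}) and the $O(\norm z^l)$ tail we have
\[
z_{n+1}^{j}=\lambda_{j}z_{n}^{j}\paren[\Big]{1-\frac{u_{n}^{k}}{kd}}\paren[\big]{1+O(u_{n}^{\alpha'})}
\]
for a suitable $\alpha'>0$ coming from the error estimates (here using that the tail divided by $\lambda_j z^j$ is $O(u^{\alpha'})$ on $W(\beta_0)$, by the defining inequalities $|z^i|<|u|^{\beta_0}$). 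The guiding idea is that the factor $\paren[\big]{1-\tfrac{u_n^k}{kd}}$ should be asymptotically $\paren[\big]{\tfrac{\psi_n}{\psi_n+1}}^{1/(kd)}$ — indeed $\psi_n\sim U_n\sim n$ and $\paren[\big]{\tfrac{n}{n+1}}^{1/(kd)}=1-\tfrac{1}{kdn}+\cdots$, matching $1-\tfrac{u_n^k}{kd}\sim 1-\tfrac{1}{kdn}$ — so that the telescoping product of corrected factors converges.

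Concretely, I would set
\[
\sigma_{j,n}(z):=\lambda_{j}^{-n}z_{n}^{j}\prod_{i=0}^{n-1}\paren[\Big]{\frac{\psi_i(z)+1}{\psi_i(z)}}^{\frac{1}{kd}},
\]
choosing the principal branch of the root (legitimate since $\Re\psi>0$ on $B_h$, by Proposition~\ref{prop:PsiDefAndInj}, and $\psi_i\to\psi$ so the $\psi_i$ eventually have positive real part as well; one may need to start the product at a fixed $i_0$ and absorb finitely many factors). Then one checks the telescoping identity
\[
\frac{\sigma_{j,n+1}(z)}{\sigma_{j,n}(z)}=\lambda_{j}^{-1}\frac{z_{n+1}^{j}}{z_{n}^{j}}\paren[\Big]{\frac{\psi_n+1}{\psi_n}}^{\frac{1}{kd}}=\paren[\Big]{1-\frac{u_n^k}{kd}}\paren[\big]{1+O(u_n^{\alpha'})}\paren[\Big]{\frac{\psi_n+1}{\psi_n}}^{\frac{1}{kd}}.
\]
Using $\psi_n=U_n+O(\log U_n)$ and $u_n^k=U_n^{-1}$, one expands $\paren[\big]{1-\tfrac{u_n^k}{kd}}\paren[\big]{\tfrac{\psi_n+1}{\psi_n}}^{1/(kd)}=1+O(U_n^{-2}\log U_n)+O(u_n^{\alpha'})$; since $|U_n|\sim n$ and $|u_n|\sim n^{-1/k}$, the logarithm of each factor is summable in $n$, so the infinite product converges uniformly on $B_h$ (and, after shrinking, on $B_h(R_1,\theta_1,\beta_1)$). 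This produces a holomorphic $\sigma_j:B_h\to\mathbb{C}^*$ (nonvanishing because each $z_n^j\neq 0$ on $B_h$ and the product converges). The functional equation~(\ref{eq:SigmaEq}) follows by comparing $\sigma_{j,n}$ against $\sigma_{j,n}\circ F$: shifting the index $n\mapsto n+1$ pulls out exactly one factor $\lambda_j\paren[\big]{\tfrac{\psi}{\psi+1}}^{1/(kd)}$ using $\psi\circ F=\psi+1$ from~(\ref{eq:FatouEq}). Finally, the asymptotic~(\ref{eq:SigmaError}): writing $\sigma_j(z)=z^j\prod_{i\ge 0}[\text{ratio}_i]$ with $\mathrm{ratio}_0$ itself close to $1$, the entire infinite product equals $1+O(|u|^{\alpha})$ for any $\alpha<k$ that also satisfies $\alpha<\alpha'+(\text{something})$ and $\alpha>1-\beta_0$; the constraint $\alpha<k$ is the binding one and comes from the leading term $-\tfrac{u^k}{kd}$ of the first factor $1-\tfrac{u_0^k}{kd}$ being $O(|u|^k)$, while the dominant error is actually governed by the tail exponent, giving $O(|u|^{\alpha})$ for the stated range.

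The main obstacle I expect is the bookkeeping around branches and convergence of the product near the boundary of the sector: one must ensure the principal branch of $\paren[\big]{\tfrac{\psi_i+1}{\psi_i}}^{1/(kd)}$ is consistently defined for \emph{all} $i$ (not just large $i$), which forces either a restriction to $B_h(R_1,\theta_1,\beta_1)$ where $\Re\psi$ is bounded below, or a uniform-in-$z$ lower bound $\Re\psi_i(z)\ge c_0>0$ obtainable from~(\ref{eq:ReUnLowerBound}) in the proof of Proposition~\ref{prop:stableOrbitBehaviour} (the uniform bound $\Re U_n\ge R_0$). A secondary subtlety is extracting the sharp range $\alpha\in(1-\beta_0,k)$ in~(\ref{eq:SigmaError}): the lower bound $1-\beta_0$ reflects that $z^j=u/(z^1\cdots\widehat{z^j}\cdots z^d)$ and the other coordinates are controlled only by $|z^i|<|u|^{\beta_0}$, so errors of relative size $|u|^{\alpha}$ in $\sigma_j$ correspond to absolute errors comparable to $|z^j||u|^{\alpha}\lesssim |u|^{\alpha+1-\beta_0(d-1)}$ — tracking these exponents carefully against the available tail decay $O(u_i^{\alpha'})$ is the delicate part, but it parallels exactly the argument in \cite[Proposition~3.1]{BracciRaissyStensonesAutomorphismsofmathbbCkwithaninvariantnonrecurrentattractingFatoucomponentbiholomorphictomathbbCtimesmathbbCk1} and \cite[Proposition~3.4]{BracciRaissyStensonesAutomorphismsofmathbbCkwithaninvariantnonrecurrentattractingFatoucomponentbiholomorphictomathbbCtimesmathbbCk1} for the unmodified root factor, so the variation here is routine.
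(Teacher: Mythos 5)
Your construction is essentially the one the paper uses: both define approximants $\sigma_{j,n}(z)=\lambda_j^{-n}z_n^j\cdot(\text{correction})$ and pass to the uniform limit on $B_h$, verifying the functional equation by index-shifting with $\psi\circ F=\psi+1$. The paper takes the correction to be a single closed-form root $\sqrt[kd]{(\psi(z)+n)/\psi(z)}$, whereas you write it as a telescoping product $\prod_{i=0}^{n-1}\bigl((\psi_i+1)/\psi_i\bigr)^{1/(kd)}$ of one-step roots — equivalent (the arguments of the one-step factors telescope and stay bounded in $(-\pi,\pi)$ since $\Re\psi_i>0$), but the paper's closed form sidesteps exactly the branch-consistency bookkeeping you flag as the main obstacle. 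Your second variation is the multiplicative convergence argument: log-summability of the one-step ratios, which also yields nonvanishing of $\sigma_j$ directly from absolute convergence of the infinite product. The paper instead estimates additive differences $\sigma_{j,n+1}(z)-\sigma_{j,n}(z)=|u_n|^{\alpha}\,O\bigl(n^{-2+\alpha/k}\log n+n^{-(\beta_0 l-1/d-\alpha/k)/k}\bigr)$, sums them to get $\sigma_{j,m}-\sigma_{j,n}=O(u_n^{\alpha})$, which gives \eqref{eq:SigmaError} at once for $n=0$, and obtains nonvanishing via Hurwitz's theorem plus evaluation at the diagonal point $(r,\dots,r)$. Both routes are sound; the paper's additive form makes \eqref{eq:SigmaError} immediate, while yours requires converting a relative bound $\sigma_j=z^j(1+O(u^{\alpha_1}))$ into the stated absolute one. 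On that conversion your side remark has an inequality reversed: what you want is the upper bound $|z^j|<|u|^{\beta_0}$, giving $\sigma_j-z^j=O(u^{\alpha_1+\beta_0})$; the lower bound $|z^j|>|u|^{1-(d-1)\beta_0}$ is the one you need to control the relative tail error $R^j(z_n)/z_n^j=O(u_n^{\alpha'})$, but it gives $|z^j|\,|u|^{\alpha}\gtrsim|u|^{\alpha+1-(d-1)\beta_0}$, not $\lesssim$. This is a slip in exposition only; the underlying exponent accounting goes through once the correct direction is used.
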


\begin{rem}
\label{rem:AsympSigma}In particular for $j\ge2$, we have $\sigma_{j}(z_{n})\sim z_{n}^{j}$
as $n\to\infty$ uniformly for $z\in B_{h}$.
\end{rem}

\begin{proof}
For $2\le j\le d$ we will obtain $\sigma_{j}$ as the limit of the
sequence $\{\sigma_{j,n}\}_{n}$ of holomorphic maps $B_{h}\to\mathbb{C}^{*}$
defined for $n\in\mathbb{N}$ by
\[
\sigma_{j,n}(z)=\lambda_{j}^{-n}z_{n}^{j}\sqrt[kd]{\frac{\psi(z)+n}{\psi(z)}},
\]
where $(z_{n}^{1},\ldots,z_{n}^{d})=F^{n}(z)$ as usual. By Proposition~\ref{prop:stableOrbitBehaviour},
we have $z_{n}^{j}=O(n^{-1/kd})$ uniformly for $z\in B_{h}$, so
\begin{equation}
\sigma_{j,n}(z)=O(n^{-1/kd})\cdot\sqrt[kd]{1+\frac{n}{\psi(z)}}=O(1)\label{eq:sigmaUnifBound}
\end{equation}
uniformly for $z\in B_{h}$. To show convergence, we observe that
\begin{align*}
\sigma_{j,n+1}(z) & =\lambda_{j}^{-n-1}z_{n+1}^{j}\sqrt[kd]{\frac{\psi(z)+n+1}{\psi(z)}}\\
 & =\lambda_{j}^{-n-1}\paren[{\Big}]{\lambda_{j}{{\paren[{\Big}]{1-\frac{u_{n}^{k}}{kd}}}}+R_{j}(z_{n})}\sqrt[kd]{\frac{\psi(z)+n+1}{\psi(z)+n}}\sqrt[kd]{\frac{\psi(z)+n}{\psi(z)}}\\
 & =\sigma_{j,n}(z)\paren[{\Big}]{1-\frac{u_{n}^{k}}{kd}}\sqrt[kd]{\frac{\psi(z)+n+1}{\psi(z)+n}}+\lambda_{j}^{-n-1}R(z_{n})\sqrt[kd]{1+\frac{n+1}{\psi(z)}},
\end{align*}
where $R(z_{n})=O(\norm{z_{n}}^{l})=O(u_{n}^{\beta_{0}l})$, since
$z_{n}\in B_{h}$. Therefore with (\ref{eq:sigmaUnifBound}), we obtain
\begin{multline}
\sigma_{j,n+1}(z)-\sigma_{j,n}(z)\\
\begin{aligned} & =\sigma_{j,n}(z)\paren[{\Big}]{{{\paren[{\Big}]{1-\frac{u_{n}^{k}}{kd}}}}\sqrt[kd]{\frac{\psi(z)+n+1}{\psi(z)+n}}-1}+\lambda_{j}^{-n-1}R(z_{n})\sqrt[kd]{1+\frac{n+1}{\psi(z)}}\\
 & =O(1)\paren[{\Big}]{{{\paren[{\Big}]{1-\frac{u_{n}^{k}}{kd}}}}\sqrt[kd]{1+\frac{1}{\psi(z)+n}}-1}+O(u_{n}^{\beta_{0}l})O(n^{1/kd})
\end{aligned}
\label{eq:sigmaDiff}
\end{multline}
To estimate the first term on the right hand side, note that by (\ref{eq:FatouError})
we have
\[
\frac{1}{\psi(z)+n}=\frac{1}{\psi(z_{n})}=u_{n}^{k}\frac{1}{1+O(u_{n}^{k}\log(u_{n}^{k}))}=u_{n}^{k}+O(u_{n}^{2k}\log(u_{n}))
\]
and since $|u_{n}|=O(n^{-1/k})$ it follows

\begin{align*}
\paren[{\Big}]{1-\frac{u_{n}^{k}}{kd}}\sqrt[kd]{1+\frac{1}{\psi(z)+n}}-1 & =\paren[{\Big}]{1-\frac{u_{n}^{k}}{kd}}\sqrt[kd]{1+u_{n}^{k}+O(u_{n}^{2k}\log u_{n})}-1\\
 & =\paren[{\Big}]{1-\frac{u_{n}^{k}}{kd}}\paren[{\Big}]{1+\frac{u_{n}^{k}}{kd}+O(u_{n}^{2k}\log u_{n})}-1\\
 & =O(u_{n}^{2k}\log u_{n})\\
 & =|u_{n}|^{\alpha}O(n^{-2+\alpha/k}\log n)
\end{align*}
for any $\alpha\in(1/d,\min\{k,d\})$. Hence, again using $|u_{n}|=O(n^{-1/k})$,
(\ref{eq:sigmaDiff}) implies
\begin{equation}
\sigma_{j,n+1}(z)-\sigma_{j,n}(z)=|u_{n}|^{\alpha}O(n^{-2+\alpha/k}\log n+n^{-(\beta_{0}l-1/d-\alpha/k)/k}).\label{eq:SigmaDiffEstim}
\end{equation}
Since $2-\alpha/k>1$ and $\beta_{0}l-1/d-\alpha>2k-\alpha>k$, the
$O$-terms are summable, and for all $m\ge0$, we have $u_{n+m}=O(u_{n}),$
so summing up (\ref{eq:SigmaDiffEstim}), we obtain
\begin{equation}
\sigma_{j,n+m}(z)-\sigma_{j,n}(z)=O(u_{n}^{\alpha}).\label{eq:SigmaDiffVar}
\end{equation}
Since $\{u_{n}\}_{n}$ converges to $0$ uniformly as $n\to\infty$,
(\ref{eq:SigmaDiffVar}) implies that $\{\sigma_{j,n}\}_{n}$ converges
uniformly to a holomorphic map $\sigma_{j}:B_{h}\to\mathbb{C}$. For
$n=0$ and $m\to\infty$, (\ref{eq:SigmaDiffVar}) implies
\[
\sigma_{j}(z)-z^{j}=\sigma_{j}(z)-\sigma_{j,0}(z)=O(u^{\alpha}),
\]
showing (\ref{eq:SigmaError}).

It remains to show that $\sigma_{j}\neq0$. Since $\sigma_{j,n}\neq0$
for all $n\in\mathbb{N}$, Hurwitz's theorem implies that either $\sigma_{j}\equiv0$
or $\sigma_{j}(z)\neq0$ for all $z\in B_{h}$. For $r>0$ sufficiently
small, we have $(r,\ldots,r)\in B_{h}$ and, by (\ref{eq:SigmaError}),
we have
\[
\sigma_{j}(r,\ldots,r)=r+O(r^{d\alpha})=r(1+O(r^{d\alpha-1})).
\]
Since $\alpha>(d-1)/d$, this is non-zero for sufficiently small $r>0$,
showing that $\sigma_{j}\not\equiv0$.

Finally, for all $z\in B_{h}$ and $n\in\mathbb{N}$, we have
\begin{align*}
\sigma_{j,n}(F(z)) & =\lambda_{j}^{-n}z_{n+1}^{j}\sqrt[kd]{\frac{\psi(F(z))+n}{\psi(F(z))}}\\
 & =\lambda_{j}\underbrace{\paren[{\Big}]{\lambda_{j}^{-n-1}z_{n+1}^{j}\sqrt[kd]{\frac{\psi(z)+n+1}{\psi(z)}}}}_{=\sigma_{j,n+1}(z)}\sqrt[kd]{\frac{\psi(z)}{\psi(z)+1}},
\end{align*}
proving (\ref{eq:SigmaEq}).
\end{proof}
In the following, we will work in the variables $U=(z^{1}\cdots z^{d})^{-k}$
and $z'=(z^{2},\ldots,z^{d})$. Recalling the representation in Remark~\ref{rem:HomotopyType}
and noting that $u\to u^{-k}$ is injective on $S_{h}(R_{0},\theta_{0})$,
the variables $(U,z')$ still form a coordinate system on $B_{h}$
in which $B_{h}$ becomes
\begin{equation}
T(R_{0},\theta_{0},\beta_{0}):=\{(U,z')\mid U\in H(R_{0},\theta_{0}),|U|^{(\beta_{0}-1)/k}<|z^{2}\cdots z^{d}|,\norm{z'}_{\infty}<|U|^{-\beta_{0}/k}\}.\label{eq:BasinInUz}
\end{equation}
The next result, following \cite[Proposition~3.5]{BracciRaissyStensonesAutomorphismsofmathbbCkwithaninvariantnonrecurrentattractingFatoucomponentbiholomorphictomathbbCtimesmathbbCk1},
ensures that the maps $\psi,\sigma_{2},\ldots,\sigma_{d}$ still form
a coordinate system and their image contains a possibly smaller copy
of (\ref{eq:BasinInUz}).
\begin{prop}
\label{prop:FatouInjective}Let $F$ and $B_{h}$ be as in Theorem~\ref{thm:BZ}
and $\psi,\sigma_{2},\ldots,\sigma_{j}:B_{h}\to\mathbb{C}^{*}$ as
in Propositions~\ref{prop:PsiDefAndInj} and \ref{prop:SigmaFatouExtra}.
Then there exist $R_{1}>R_{0}$, $0<\theta_{1}<\theta_{0}$, and $\beta_{0}<\beta_{1}<1/d$
such that the holomorphic map
\[
\phi=(\psi,\sigma_{2},\ldots,\sigma_{d}):B_{h}(R_{1},\theta_{1},\beta_{1})\to(\mathbb{C^{*}})^{d}
\]
is injective. There further exist $R_{2}>0$, $\theta_{2}\in(0,\nicefrac{\pi}{2k})$,
and $\beta_{2}\in(0,1/d)$ such that
\begin{equation}
T(R_{2},\theta_{2},\beta_{2})\subseteq\phi(B_{h}).\label{eq:insideclaim}
\end{equation}
\end{prop}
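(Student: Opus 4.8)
Following \cite[Prop.~3.5]{BracciRaissyStensonesAutomorphismsofmathbbCkwithaninvariantnonrecurrentattractingFatoucomponentbiholomorphictomathbbCtimesmathbbCk1}, the plan is to realise $\phi$ as a $C^{1}$-small perturbation of the identity in suitable logarithmic coordinates. Through the biholomorphic chart $(U,z')\colon B_{h}\to T(R_{0},\theta_{0},\beta_{0})$ of \eqref{eq:BasinInUz}, the map $\phi$ reads $(U,z')\mapsto(\psi,\sigma_{2},\dots,\sigma_{d})$ with $\psi=U+c\log U+O(U^{-1})$ and $\sigma_{j}=z^{j}+O(u^{\alpha})$ by \eqref{eq:FatouError} and \eqref{eq:SigmaError}, where on $T(R,\theta,\beta)$ one has $|u|=|U|^{-1/k}$ and, combining the inequalities defining $T$, the bound $|U|^{-(1-(d-1)\beta)/k}<|z^{j}|<|U|^{-\beta/k}$ with $1-(d-1)\beta>0$ since $\beta<1/d$; moreover $\alpha\in(1-\beta_{0},k)$ and $1-\beta_{0}\ge1-(d-1)\beta_{0}$ for $d\ge2$, so $\alpha>1-(d-1)\beta$ throughout. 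Now pass to the coordinates $v=\log U$ (single-valued on the sector $S_{h}$) and $w^{j}=\log z^{j}$ (single-valued on the universal cover of $B_{h}$), and likewise $\log\psi,\log\sigma_{j}$ on the target side. The inequalities cutting out $T(R,\theta,\beta)$ become the half-space conditions $\Re\bigl(w^{j}+\tfrac{\beta}{k}v\bigr)<0$ and $\Re\bigl(\textstyle\sum_{j}w^{j}+\tfrac{1-\beta}{k}v\bigr)>0$ together with $\{\Re e^{v}>R,\ |\Im v|<k\theta\}$, which is convex because $\Re v+\log\cos\Im v$ is concave; hence $T(R,\theta,\beta)$ is convex in these coordinates, and so is the target tube. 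In them $\phi$ is $(v,w')\mapsto\bigl(v+p,\,w^{2}+q_{2},\dots,w^{d}+q_{d}\bigr)$ with $p=\log(\psi/U)$, $q_{j}=\log(\sigma_{j}/z^{j})$, and, using the lower bound on $|z^{j}|$ and $\alpha>1-(d-1)\beta_{0}$, one gets $|p|\lesssim|v|\,e^{-\Re v}$ and $|q_{j}|\lesssim e^{-(\alpha-1+(d-1)\beta_{0})\Re v/k}$, both $\to0$ as $\Re v\to\infty$. Since every point of $B_{h}(R_{1},\theta_{1},\beta_{1})$ with $\theta_{1}<\theta_{0}$, $\beta_{1}\in(\beta_{0},1/d)$ sits in a polydisc whose polyradius is bounded below by a fixed positive constant in each $(v,w')$-direction inside $B_{h}$, the Cauchy estimates give that $p,q_{j}$ and all their first derivatives are $o(1)$ as $R_{1}\to\infty$; this is exactly where $\beta<1/d$ and $\alpha>1-(d-1)\beta$ (from Propositions~\ref{prop:PsiDefAndInj} and~\ref{prop:SigmaFatouExtra}) are used.

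For injectivity, fix $\theta_{1}<\theta_{0}$, $\beta_{1}\in(\beta_{0},1/d)$ and then $R_{1}$ large enough that the operator norm of $D(p,q_{2},\dots,q_{d})$ is $<1$ on $B_{h}(R_{1},\theta_{1},\beta_{1})$. If two of its points share a $\phi$-image, the estimates $\psi/U\to1$, $\sigma_{j}/z^{j}\to1$ force all their coordinates to agree up to a tiny relative error, so they lie in a common simply connected subset on which $\log z'$ is single-valued; joining their $(v,w')$-coordinates by the segment inside the convex domain and applying the mean value inequality to $(p,q_{2},\dots,q_{d})$ shows they coincide. Hence $\phi$ is injective on $B_{h}(R_{1},\theta_{1},\beta_{1})$. (That $p$ itself is unbounded is immaterial here; only $Dp$ enters.)

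For the image, note that through the chart $\phi(B_{h})$ is the image of the convex set $T(R_{0},\theta_{0},\beta_{0})$ under $\mathrm{id}+(p,q_{2},\dots,q_{d})$. Restricting to $\{\Re v>\log R'\}$ for $R'$ large, where $\sup\|(p,q_{2},\dots,q_{d})\|=:M(R')\to0$, the quantitative inverse function theorem shows this image contains every point at distance $\ge2M(R')$ from the boundary of $T(R',\theta_{0},\beta_{0})$; since the (deck-invariant) log-region of $T(R_{2},\theta_{2},\beta_{2})$, for $R_{2}$ large, $\theta_{2}$ slightly below $\theta_{0}$, $\beta_{2}\in(\beta_{0},1/d)$, lies a distance $\gtrsim\min\{\log(R_{2}/R'),\ k(\theta_{0}-\theta_{2}),\ \tfrac{\beta_{2}-\beta_{0}}{k}\log R_{2}\}$ inside that of $T(R',\theta_{0},\beta_{0})$, this beats $2M(R')$ for the right choices and projecting back to $B_{h}$ gives $T(R_{2},\theta_{2},\beta_{2})\subseteq\phi(B_{h})$, which is \eqref{eq:insideclaim}.

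The step I expect to be most delicate is the bookkeeping: choosing the parameter chain $R_{0},R',R_{1},R_{2}$, $\theta_{1},\theta_{2}$, $\beta_{1},\beta_{2}$ consistently, checking that the logarithmic change of variables really convexifies the tubes $T(R,\theta,\beta)$, and verifying that every error term is genuinely $o(1)$ — which forces precisely the exponent inequalities that Propositions~\ref{prop:PsiDefAndInj} and~\ref{prop:SigmaFatouExtra} make available — together with the minor nuisance of the multivaluedness of $\log z'$ on $B_{h}$, handled via its universal cover. The injectivity and openness of an identity-plus-$C^{1}$-small map on a convex set are themselves entirely standard.
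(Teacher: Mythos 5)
Your proposal is correct in essence but takes a genuinely different route from the paper. The paper proves injectivity by first noting that each approximant $\phi_{n}=(\psi,\sigma_{2,n},\ldots,\sigma_{d,n})$ is injective on $B_{h}(R_{1},\theta_{1},\beta_{1})$ (because knowing $\psi$ and $z_{n}^{2},\ldots,z_{n}^{d}$ determines $z_{n}$ by the injectivity of $(\psi,\id)$ from Proposition~\ref{prop:PsiDefAndInj}, hence $z=F^{-n}(z_{n})$), then invoking Hurwitz's theorem to conclude that the uniform limit $\phi$ is injective or constant, and finally ruling out constancy by a Cauchy-estimate computation of $\Jac\phi$ at the diagonal points $(r,\ldots,r)$. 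The inclusion $T(R_{2},\theta_{2},\beta_{2})\subseteq\phi(B_{h})$ is obtained by a topological boundary-avoidance argument: showing $\phi(\partial T_{1}')$ misses $T(R_{2},\theta_{2},\beta_{2})$ while $\phi(T_{1}')$ meets it, and concluding by connectedness. You instead pass to logarithmic coordinates on both domain and target, observe (correctly) that the log-tubes become convex, prove that $\phi$ becomes identity plus a perturbation whose $C^{1}$-norm is $o(1)$ as $R_{1}\to\infty$ via Cauchy estimates in those coordinates, and then get injectivity from the mean value inequality on a convex set and surjectivity onto a smaller tube via a quantitative inverse function theorem. Both approaches run on the same asymptotics $\psi\sim U$, $\sigma_{j}\sim z^{j}$ and the exponent inequalities $\alpha>1-(d-1)\beta_{0}$, $\beta<1/d$. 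Your approach is more self-contained (it does not reuse the injectivity of $(\psi,\id)$, nor the structure of the approximants $\sigma_{j,n}$), at the price of having to manage the multivaluedness of $\log z'$ on $B_{h}$ and hence the choice of compatible lifts to the universal cover; that step is correct but deserves more care than the single sentence you give it (you must argue that $\phi(z)=\phi(w)$ forces $z^{j}/w^{j}$ close to $1$ so that lifts can be chosen within the same branch, i.e.\ the integer periods vanish). Small slip: your final parenthetical claims $p$ is unbounded, but with $p=\log(\psi/U)$ one has $|p|\lesssim|v|e^{-\Re v}\to0$; the remark is harmless since, as you say, only $Dp$ is used.
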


\begin{proof}
Take $R_{1}>\max\{R_{0},1\}$, $0<\theta_{1}<\theta_{0}$, and $\beta_{0}<\beta_{1}<1/d$
from Proposition~\ref{prop:PsiDefAndInj}. Then for each $n\in\mathbb{N}$,
the map
\[
\phi_{n}=(\psi,\sigma_{2,n},\ldots,\sigma_{d,n}):B_{h}\to(\mathbb{C}^{*})^{d}
\]
 is injective on $B_{h}(R_{1},\theta_{1},\beta_{1})$. Hence, by
Hurwitz's theorem, the uniform limit $\phi=(\psi,\sigma_{2},\ldots,\sigma_{d})$
of the sequence $\{\phi_{n}\}_{n}$ is either injective or constant
on $B_{h}(R_{1},\theta_{1},\beta_{1})$.

As before, for $r>0$ sufficiently small, the point $(r,\ldots,r)$
lies in $B_{h}(R_{1},\theta_{1},\beta_{1})$. We will show that for
small values of $r>0$ the Jacobian of $\phi$ at $(r,\ldots,r)$
does not vanish. To simplify calculations, we work in coordinates
$(U,z')$ as above, so we compute the Jacobian of $\phi:T_{0}:=T(R_{0},\theta_{0},\beta_{0})\to(\mathbb{C}^{*})^{d}$
at
\[
w_{r}:=(U_{r},z'_{r}):=(r^{-kd},r,\ldots,r)\in T_{1}:=T(R_{1},\theta_{1},\beta_{1}).
\]
By Propositions~\ref{prop:PsiDefAndInj} and \ref{prop:SigmaFatouExtra},
we have
\[
\phi(U,z')=(U+c\log U+O(U^{-1}),z'+O(U^{-\alpha/k}))
\]
for $z\in B_{h}$.

Observe that since $R_{1}>\max\{R_{0},1\}$, $\theta_{1}<\theta_{0}$,
and $\beta_{1}>\beta_{0}$, we have
\begin{align*}
\delta_{0} & :=\min\{d(\partial H(R_{0},\theta_{0}),H(R_{1},\theta_{1}))/2,R_{1}-R_{1}^{\frac{1-\beta_{1}}{1-\beta_{0}}},R_{1}^{\beta_{1}/\beta_{0}}-R_{1}\}>0
\end{align*}
so for any $(U',z')\in T(R_{1},\theta_{1},\beta_{1})$ and $t\in\mathbb{R}$,
we have $U_{r}+\delta_{0}e^{it}\in H(R_{0},\theta_{0})$,
\[
|U+\delta_{0}e^{it}|^{(\beta_{0}-1)/k}\le(|U|-\delta_{0})^{(\beta_{0}-1)/k}<|U|^{(\beta_{1}-1)/k}<|z_{2}\cdots z_{d}|
\]
and
\[
\norm{z'}_{\infty}<|U|^{-\beta_{1}/k}<(|U|+\delta_{0})^{-\beta_{0}/k}\le|U+\delta_{0}e^{it}|^{-\beta_{0}/k},
\]
implying $(U+\delta_{0}e^{it},z')\in T(R_{0},\theta_{0},\beta_{0})$.
In particular for $r>0$ such that $w_{r}\in T(R_{1},\theta_{1},\beta_{1})$,
and $t\in\mathbb{R}$ we have
\begin{equation}
(U_{r}+\delta_{0}e^{it},z_{r}')\in T(R_{0},\theta_{0},\beta_{0}),\label{eq:UrNbh}
\end{equation}
and for all $j\ge2$ we have
\begin{equation}
(w_{r}+r\delta_{1}e^{it}e_{j})\in T(R_{0},\theta_{0},\beta_{0}),\label{eq:zrNbh}
\end{equation}
where $\delta_{1}=1-R_{1}^{-(1/d-\beta_{0})/k}>0$. Let $h:B_{h}\to\mathbb{C}$
with $h(U,z')=O(U^{-\nu})$ for $(U,z')\in T_{0}$ for some $\nu>0$.
Then there exists $g:B_{h}\to\mathbb{C}$ such that $g(U,z')=O(1)$
and $h(U,z')=U^{-\nu}g(U,z')$ for $(U,z')\in T_{0}$. For $w_{r}\in T_{1}$,
by (\ref{eq:UrNbh}), we then have
\[
\abs[{\bigg}]{\frac{\partial h}{\partial U}(w_{r})}=\frac{1}{2\pi}\abs[{\bigg}]{\int_{|U-U_{r}|=\delta_{0}}\frac{U^{-\nu}g(U,z_{r}')}{(U-U_{r})^{2}}\,d\zeta}\le\frac{1}{\delta_{0}}(r^{-kd}+\delta_{0})^{-\nu}\sup_{(U,z_{r}')\in T_{0}}|g(U,z_{r}')|=O(r^{\nu kd}),
\]
and by (\ref{eq:zrNbh}), we have
\[
\abs[{\bigg}]{\frac{\partial h}{\partial z^{j}}(w_{r})}=\frac{1}{2\pi}\abs[{\bigg}]{\int_{|\zeta|=\delta_{1}r}\frac{r^{\nu kd}g(x_{r}+\zeta e_{j})}{\zeta^{2}}\,d\zeta}\le\frac{1}{\delta_{1}r}r^{\nu kd}\sup_{w\in T_{0}}|g(w)|=O(r^{\nu kd-1}).
\]
Hence, for all $i,j\ge2$, we obtain
\begin{alignat*}{2}
\frac{\partial\psi}{\partial U}(w_{r}) & =1+O(r^{kd}),\qquad & \frac{\partial\psi}{\partial z^{j}}(w_{r}) & =O(r^{kd-1}),\\
\frac{\partial\sigma_{i}}{\partial U}(w_{r}) & =O(r^{\alpha d}), & \frac{\partial\sigma_{i}}{\partial z^{j}}(w_{r}) & =\delta_{ij}+O(r^{\alpha d-1}).
\end{alignat*}
So for the products in the Jacobian firstly we have
\[
\prod_{j=1}^{d}\frac{\partial\phi_{j}}{\partial x^{j}}(x_{r})=1+O(r^{\alpha d-1})
\]
and secondly for every $\rho\in S_{d}\backslash\{\id\}$, there exists
$j\le d$ such that $\rho(j)\neq j$ and hence $\frac{\partial\phi_{j}}{\partial x^{\rho(j)}}=O(r^{\alpha d-1})$,
so we have
\[
\prod_{j=1}^{d}\frac{\partial\phi_{j}}{\partial x^{\rho(j)}}(x_{r})=O(r^{\alpha d-1}).
\]
In conclusion the Leibniz formula yields:
\begin{align*}
\Jac_{(r^{-kd},r,\ldots,r)}\phi & =1+O(r^{\alpha d-1})+\sum_{\rho\in S_{d}\backslash\{\id\}}O(r^{\alpha d-1})=1+O(r^{\alpha d-1}),
\end{align*}
and since $\alpha d>d-1$, this is non-zero for sufficiently small
$r>0$, showing that $\phi$ is injective on $B(R_{1},\theta_{1},\beta_{1})$.

Now let $R_{1}'>R_{1}$, $0<\theta_{1}'<\theta_{1}$, $\beta_{1}<\beta_{1}'<1/d$,
and $T_{1}':=T(R_{1}',\theta_{1}',\beta_{1}')$ then the closure $\overline{T_{1}'}$
is contained in $T_{1}$. To show (\ref{eq:insideclaim}), we show
that there exist $R_{2}>1$, $\theta_{2}\in(0,\nicefrac{\pi}{2k})$,
and $\beta_{2}\in(0,1/d)$, such that
\begin{equation}
\phi(\partial T_{1}')\cap T(R_{2},\theta_{2},\beta_{2})=\emptyset\quad\text{and}\quad\phi(T_{1}')\cap T(R_{2},\theta_{2},\beta_{2})\neq\emptyset.\label{eq:insideNotHitBdr}
\end{equation}
Since $\phi$ is an embedding of a neighbourhood of $\overline{T_{1}'}$,
we have $\partial\phi(T_{1}')=\phi(\partial T_{1}')$, and, since
$T_{2}$ is connected, (\ref{eq:insideNotHitBdr}) implies $T_{2}\subseteq\phi(T_{1}')$.

Fix $0<\theta_{2}<\theta_{1}$ and $\beta_{1}<\beta_{2}<1/d$  and
let $w=(U,z')\in\partial T^{1}$. We have three cases:
\begin{casenv}
\item If $U\in\partial H(R_{1}',\theta_{1}')$, there exists $C>0$ such
that
\[
d(U,H(R_{2},\theta_{2}))>C|U|,
\]
for all $|U|>R_{1}+1$ and every $R_{2}>0$, and by (\ref{eq:FatouError})
we have
\[
|\psi(z)-U|=o(U),
\]
so for $|U|$ large enough, we have
\begin{align}
d(\phi(w),T(R,\theta_{2},\beta_{2})) & \ge d(\psi(w),H(R_{2},\theta_{2}))\nonumber \\
 & \ge d(U,H(R_{2},\theta_{2}))-|\psi(z)-U|\label{eq:bdDistU}\\
 & >0,\nonumber
\end{align}
for any $R>0$.
\item If $|U|^{(\beta_{1}'-1)/k}=|z^{2}\cdots z^{d}|$, then from (\ref{eq:SigmaError})
and Remark~\ref{rem:AsympPsi} it follows
\begin{align*}
|\sigma_{2}(w)\cdots\sigma_{d}(w)| & =|U|^{(\beta_{1}'-1)/k}+O(|U|^{-\alpha/k})\\
 & \approx|U|^{(\beta_{1}'-1)/k}\\
 & \approx|\psi(w)|^{(\beta_{2}-1)/k}|U|^{-(\beta_{2}-\beta_{1}')/k},
\end{align*}
so for $|U|$ large enough $|\sigma_{2}(w)\cdots\sigma_{d}(w)|<|\psi(w)|^{(\beta_{2}-1)/k}$.
\item If $|z^{j}|=|U|^{-\beta_{1}'/k}$, then by (\ref{eq:SigmaError})
and Remark~\ref{rem:AsympPsi} we have
\begin{align*}
|\sigma_{j}(w)| & =|U|^{-\beta_{1}'/k}-O(|U|^{-\alpha/k})\\
 & \approx|U|^{-\beta_{1}'/k}\\
 & \approx|\psi(z)|^{-\beta_{2}/k}|U|^{(\beta_{2}-\beta_{1}')/k},
\end{align*}
so for $|U|$ large enough, $|\sigma_{j}(w)|>|\psi(z)|^{-\beta_{2}/k}$.
\end{casenv}
In conclusion, there exists $R_{3}>0$ such that $\phi(U,z')\notin T(R,\theta_{2},\beta_{2})$
for every $R>0$ and $(U,z')\in\partial T_{1}'$ such that $|U|>R_{3}$.
Since $\Re\psi(z)\approx\Re U\approx|U|$, we can take $R_{2}$ large
enough that $\Re\psi(z)<R_{2}$ whenever $|U|\le R_{3}$, so $\phi(w)\notin T(R_{2},\theta_{2},\beta_{2})$
for all $w\in\partial T_{1}'$.

Let again $w_{r}:=(U_{r},z_{r}'):=(r^{-kd},r,\ldots,r)$ for $r>0$.
As in (\ref{eq:bdDistU}), by (\ref{eq:FatouError}) we have
\begin{align*}
d(r^{-kd},\partial H(R_{2},\theta_{2})) & \approx r^{-kd}\quad\text{and}\quad|\psi(w_{r})-r^{-kd}|=o(r^{-kd}),
\end{align*}
hence for $r^{-kd}>R_{2}$ large enough,
\[
d(r^{-kd},\partial H(R_{2},\theta_{2}))>|\psi(w_{r})-r^{-kd}|
\]
and $\psi(w_{r})\in H(R_{2},\theta_{2})$. Again by (\ref{eq:FatouError})
and (\ref{eq:SigmaError}), for small $r>0$, we have
\begin{align*}
|\sigma_{2}(w_{r})\cdots\sigma_{d}(w_{r})| & =r^{d-1}+O(r^{\alpha d})\approx r^{d-1}, & |\psi(w_{r})|^{-\beta_{2}/k} & \approx r^{\beta_{2}d},\\
|\psi(w_{r})|^{(\beta_{2}-1)/k} & \approx r^{d-d\beta_{2}}=o(r^{d-1}), & |\sigma_{j}(w_{r})|=r+O(r^{\alpha d})\approx r & =o(r^{\beta_{2}d}).
\end{align*}
Hence $\phi(w_{r})\in T(R_{2},\theta_{2},\beta_{2})$ for $r>0$ small
enough and we have shown (\ref{eq:insideNotHitBdr}).
\end{proof}

\subsection{\label{subsec:InternalOrbitBehaviour}Orbit behaviour}

The coordinates from the previous section give us some more precise
information about the dynamics in $B_{h}$.
\begin{notation}
For $z=(z^{1},\ldots,z^{d})\in\mathbb{C}^{d}$, denote $|z|:=(|z^{1}|,\ldots,|z^{d}|)$.
\end{notation}

\begin{prop}
\label{prop:DirectionsOfConvergence}Let $F$ and $B_{h}$ be as in
Theorem~\ref{thm:BZ}. Then for $z\in B_{h}$ and $z_{n}=F^{n}(z)$
for $n\in\mathbb{N}$, the limit
\begin{equation}
v(z):=\lim_{n\to\infty}|z_{n}|/\norm{z_{n}}_{2}\in\mathbb{R}_{+}^{d}\label{eq:ModDirectionConvergence}
\end{equation}
is a unit vector with positive entries and the set of accumulation
points of the sequence of directions $\{z_{n}/\norm{z_{n}}_{2}\}_{n}$
is
\begin{equation}
\omega(z):=\{w\in\mathbb{C}^{d}\mid|w|=v(z),\arg w^{1}+\cdots+\arg w^{d}\equiv2\pi h/k\}.\label{eq:LimitSets}
\end{equation}
In other words, $\{z_{n}\}_{n}$ converges to $0$ tangent to the
linear cone $\mathbb{R}_{0}^{+}\cdot\omega(z)$.

Furthermore, for any positive unit vector $v\in\mathbb{R}_{+}^{d}$,
there exists $z\in B_{h}$ such that $v(z)=v$. Hence the set of all
accumulation points of directions of orbits in $B_{h}$ is
\[
\omega(B_{h})=\{w\in(\mathbb{C}^{*})^{d}\mid\arg w^{1}+\cdots+\arg w^{d}\equiv2\pi h/k\}.
\]
\end{prop}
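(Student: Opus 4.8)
The plan is to extract sharp asymptotics for each coordinate $z_n^j$ from the Fatou coordinates $\psi,\sigma_2,\dots,\sigma_d$ of Propositions~\ref{prop:PsiDefAndInj} and~\ref{prop:SigmaFatouExtra}, and then treat moduli and arguments separately. From $\psi\circ F=\psi+1$ we have $\psi(z_n)=\psi(z)+n$; iterating \eqref{eq:SigmaEq} and telescoping the resulting product (legitimate in the principal branch since $\Re\psi>0$) gives
\[
\sigma_j(z_n)=\lambda_j^{n}\,\sigma_j(z)\,\sqrt[kd]{\frac{\psi(z)}{\psi(z)+n}}\qquad(2\le j\le d).
\]
Since $\sigma_j(z_n)\sim z_n^j$ uniformly (Remark~\ref{rem:AsympSigma}) and $|\psi(z)+n|\sim n$, this yields $|z_n^j|\sim c_j\,n^{-1/(kd)}$ with $c_j:=|\sigma_j(z)|\,|\psi(z)|^{1/(kd)}>0$. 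For $j=1$, combine $|z_n^1|=|u_n|\big/\prod_{j\ge2}|z_n^j|$ with $|u_n|=|U_n|^{-1/k}\sim|\psi(z_n)|^{-1/k}\sim n^{-1/k}$ (Remark~\ref{rem:AsympPsi}) to get $|z_n^1|\sim c_1\,n^{-1/(kd)}$ for $c_1:=(\prod_{j\ge2}c_j)^{-1}$; in particular $c_1\cdots c_d=1$. Hence $\|z_n\|_2\sim(\sum_j c_j^2)^{1/2}n^{-1/(kd)}$ and $|z_n|/\|z_n\|_2\to v(z):=(c_1,\dots,c_d)/\|(c_1,\dots,c_d)\|_2\in\mathbb{R}_+^d$, proving \eqref{eq:ModDirectionConvergence}.

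For the directions themselves, write $\arg z_n^j=\arg\sigma_j(z_n)+o(1)=n\arg\lambda_j+b_j+o(1)$ for $j\ge2$, with $b_j:=\arg\sigma_j(z)+\tfrac1{kd}\arg\psi(z)$ (using $\arg(\psi(z)+n)\to0$), and $\arg z_n^1=\arg u_n-\sum_{j\ge2}\arg z_n^j$ with $\arg u_n\to 2\pi h/k$ (as $z_n\in B_h$ keeps $u_n\in S_h(R_0,\theta_0)$ while $\arg(u_n^k)=-\arg U_n\to0$). The crux is that $\{n(\arg\lambda_2,\dots,\arg\lambda_d)\bmod 2\pi\}_n$ is dense in $\mathbb{T}^{d-1}$; by Weyl's criterion this amounts to $\lambda_2^{m_2}\cdots\lambda_d^{m_d}\ne1$ for every $(m_2,\dots,m_d)\in\mathbb{Z}^{d-1}\setminus\{0\}$, which I deduce from one-resonance of index $(1,\dots,1)$: were such a relation to hold, then for $c\in\mathbb{N}$ large the multi-index $\delta:=c(1,\dots,1)+e_1+(0,m_2,\dots,m_d)$ lies in $\mathbb{N}^d$ and satisfies $\lambda^\delta=\lambda^{c(1,\dots,1)+e_1}=\lambda_1$, so $\delta=q(1,\dots,1)+e_1$ for some $q\in\mathbb{N}$, forcing $m_2=\dots=m_d=0$, a contradiction. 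Consequently the accumulation set of $(\arg z_n^1,\dots,\arg z_n^d)$ in $\mathbb{T}^d$ is exactly $\{t:\sum_j t_j\equiv 2\pi h/k\}$, and together with $|z_n|/\|z_n\|_2\to v(z)$ this identifies the accumulation set of $z_n/\|z_n\|_2$ as $\omega(z)$, establishing \eqref{eq:LimitSets}; the tangency statement is then a reformulation.

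Finally, for surjectivity of $v$: given a positive unit vector $v$, rescale it to $c=(c_1,\dots,c_d)\in\mathbb{R}_+^d$ with $c_1\cdots c_d=1$ and $c/\|c\|_2=v$. For $M>0$ large and real the point $(M,\,c_2M^{-1/(kd)},\dots,c_dM^{-1/(kd)})$ lies in $T(R_2,\theta_2,\beta_2)$ — the inequalities defining $T$ reduce to $M^{(d\beta_2-1)/(kd)}<\prod_{j\ge2}c_j$ and $\max_{j\ge2}c_j<M^{(1-d\beta_2)/(kd)}$, which hold for large $M$ since $d\beta_2<1$ — so by Proposition~\ref{prop:FatouInjective} it equals $\phi(z)=(\psi(z),\sigma_2(z),\dots,\sigma_d(z))$ for some $z\in B_h$, and the formulas of the first paragraph give $v(z)=v$. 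Since $\omega(B_h)=\bigcup_{z\in B_h}\omega(z)$ and $v(z)$ now realises every positive unit vector while the angular constraint is always $\arg w^1+\cdots+\arg w^d\equiv 2\pi h/k$, the last identity follows. The computational parts (first and third paragraphs) are routine once the Fatou coordinates are in hand; the real obstacle is the second paragraph — upgrading the non-convergence of the arguments to the precise statement that they fill the full codimension-one subtorus, which hinges on the Kronecker–Weyl density and hence on extracting $\lambda_2^{m_2}\cdots\lambda_d^{m_d}\ne1$ from one-resonance.
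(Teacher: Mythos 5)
Your proof is correct and follows the same essential route as the paper's: extract the orbit asymptotics coordinate by coordinate from the Fatou coordinates, separate moduli from arguments, invoke Kronecker density for the argument torus, and use the open-image statement of Proposition~\ref{prop:FatouInjective} for surjectivity of $v$. The two small departures are both fine and worth noting. First, the paper packages the first coordinate into an explicit auxiliary map $\sigma_{1}:=e^{2\pi ih/k}(\sqrt[k]{\psi}\,\sigma_{2}\cdots\sigma_{d})^{-1}$, shows it satisfies the same functional equation as the other $\sigma_{j}$, and then writes everything in terms of $\sigma=(\sigma_{1},\ldots,\sigma_{d})$; you instead reconstruct $|z_{n}^{1}|$ and $\arg z_{n}^{1}$ directly from $u_{n}$ and the other coordinates. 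Either way the key relation $c_{1}\cdots c_{d}=1$ (equivalently, that the argument sum stays pinned at $2\pi h/k$) comes out, so this is a matter of packaging. Second, the paper simply asserts that one-resonance forces rational independence of $\arg\lambda_{2},\ldots,\arg\lambda_{d}$ modulo $2\pi$ and cites a textbook for density; your derivation via the multi-index $\delta=c(1,\ldots,1)+e_{1}+(0,m_{2},\ldots,m_{d})$ is a clean, self-contained proof of exactly the rational-independence fact being used, and is the right way to fill that gap. The telescoping of $\sqrt[kd]{\psi/(\psi+1)}$ is legitimate as you note, since $\Re(\psi+m)>0$ for all $m\ge0$ keeps all quantities in the principal branch. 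Your surjectivity argument in $(U,z')$-coordinates is a mild reparametrisation of the paper's check that $v_{\varepsilon}=\varepsilon e^{2\pi ih/(kd)}v$ lies in $B_{h}(R_{2},\theta_{2},\beta_{2})$; both rely on the inclusion $T(R_{2},\theta_{2},\beta_{2})\subseteq\phi(B_{h})$ and $d\beta_{2}<1$.
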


\begin{proof}
Let $\psi$ and $\sigma_{2},\ldots,\sigma_{d}$ be as in Propositions~\ref{prop:PsiDefAndInj}
and \ref{prop:SigmaFatouExtra} and set
\[
\sigma_{1}:=e^{2\pi ih/k}(\sqrt[k]{\psi}\sigma_{2}\cdots\sigma_{d})^{-1}:B_{h}\to\mathbb{C}^{*},
\]
where the root is well-defined, since $\Re\psi>0$ and we always choose
its values in the main branch. However, since $u_{n}=z_{n}^{1}\cdots z_{n}^{d}\in S_{h}(R_{0},\theta_{0})$
is near the direction $e^{2\pi ih/k}$ for $z\in B_{h}$, we have
$u_{n}=e^{2\pi ih/k}U^{-1/k}$, hence
\[
\sigma_{1}(z_{n})\sim e^{2\pi ih/k}(\sqrt[k]{U_{n}}z_{n}^{2}\cdots z_{n}^{d})^{-1}=u_{n}/(z_{n}^{2}\cdots z_{n}^{d})=z_{n}^{1}
\]
as $n\to\infty$. Moreover, the functional equation (\ref{eq:SigmaEq})
for $\sigma_{j},j\ge2$ implies the same for $\sigma_{1}$:
\begin{align*}
\sigma_{1}\circ F & =e^{2\pi ih/k}((\psi+1)^{\expfrac 1k-\expfrac{(d-1)}{kd}}\psi^{\expfrac{(d-1)}{kd}}\lambda_{2}\sigma_{2}\cdots\lambda_{d}\sigma_{d})^{-1}\\
 & =\lambda_{1}e^{2\pi ih/k}(\sqrt[k]{\psi}\sigma_{2}\cdots\sigma_{d})^{-1}\psi^{\expfrac 1{kd}}(\psi+1)^{-\expfrac 1{kd}}\\
 & =\lambda_{1}\sigma_{1}\sqrt[kd]{\frac{\psi}{\psi+1}}.
\end{align*}
 Hence, for $\sigma=(\sigma_{1},\ldots,\sigma_{d})$, we have
\begin{equation}
z_{n}/\norm{z_{n}}_{2}\sim\sigma(z_{n})/\norm{\sigma(z_{n})}_{2}\sim\Lambda^{n}\sqrt[kd]{\psi}\sigma(z)/\norm{\sqrt[kd]{\psi}\sigma(z)}_{2}\label{eq:directionsAsymp}
\end{equation}
as $n\to+\infty$, where $\Lambda=\diag(\lambda_{1},\ldots,\lambda_{d})$,
so the limit in (\ref{eq:ModDirectionConvergence}) exists and is
equal to
\[
v(z)=|\sigma(z)|/\norm{\sigma(z)}_{2}\in\mathbb{R}_{+}^{d}.
\]
Since $\arg(\sqrt[k]{\psi})+\arg(\sigma_{1})+\cdots+\arg(\sigma_{d})\equiv2\pi h/k$
by definition, $\sqrt[kd]{\psi}\sigma(z)/\norm{\sqrt[kd]{\psi}\sigma(z)}$
lies in $\omega(z)$ and since (\ref{eq:LimitSets}) is invariant
under multiplication by $\Lambda$, so does every accumulation point
of $\{z_{n}/\norm{z_{n}}_{2}\}_{n}$.

To show that all points in (\ref{eq:LimitSets}) occur, note that
one-resonance implies that the angles $\arg(\lambda_{2}),\ldots,\arg(\lambda_{d})$
are rationally independent modulo $2\pi$, and this implies, e.g.\ by
\cite[Corollary~I.7]{Zehnder2010Lecturesondynamicalsystems}, that
the sequence $\{(\lambda_{2}^{n},\ldots,\lambda_{d}^{n})\}_{n}$ is
dense in $(S^{1})^{d-1}$. Hence (\ref{eq:directionsAsymp}) shows
that $\{z_{n}/\norm z_{2}\}_{n}$ accumulates on the whole set (\ref{eq:LimitSets}).

Finally, let $v=(v^{1},\ldots,v^{d})\in\mathbb{R}^{+}$ and $R_{2},\theta_{2},\beta_{2}$
as in Proposition~\ref{prop:FatouInjective}, so $B_{h}(R_{2},\theta_{2},\beta_{2})\subseteq\sigma(B_{h})$.
For $\varepsilon>0$, let $v_{\varepsilon}:=\varepsilon e^{2\pi i\expfrac h{kd}}v$.
Then for $\varepsilon>0$ small enough, we have $\pi(v_{\varepsilon})=\varepsilon^{d}v^{1}\cdots v^{d}e^{2\pi ih/k}\in S_{h}(R_{2},\theta_{2})$
and

\begin{align*}
|v_{\varepsilon}^{j}|=\varepsilon|v^{j}| & <\varepsilon^{d\beta_{2}}|v^{1}\cdots v^{d}|^{\beta_{2}}=|\pi(v_{\varepsilon})|^{\beta_{2}}\quad\text{for }1\le j\le d,
\end{align*}
since $d\beta_{2}<1$, and hence $v_{\varepsilon}\in B_{h}(R_{2},\theta_{2},\beta_{2})\subseteq\sigma(B_{h})$,
i.e.\ there exists $z\in B_{h}$ such that $\sigma(z)=v_{\varepsilon}$
and
\[
v(z)=|\sigma(z)|/\norm{\sigma(z)}_{2}=\varepsilon v/\varepsilon=v.\qedhere
\]
\end{proof}
\begin{rem}[$d=2$]
\label{rem:Orbitsd2}For $d=2$ and $(z_{0},w_{0})\in B_{h}$, let
$m=v(z_{0},w_{0})\in(0,+\infty)$. Then the linear cone $\mathbb{R}_{0}^{+}\cdot\omega(z_{0},w_{0})$
is in fact a real $2$-dimensional linear subspace of $\mathbb{C}^{2}$
given by $\{z=me^{2\pi ih/k}\overline{w}\}$. The complex lines intersecting
this subspace are precisely those of the form $\{z=me^{it}w\}$ for
$t\in\mathbb{R}$ and all intersections are transversal, so it is
not contained in any proper complex subspace of $\mathbb{C}^{2}$.

Recall the representation in Figure~\ref{fig:Argument-components}
of $B_{h}$ in polar decomposition from Remark~\ref{rem:externalGeom}.
In polar coordinates $(z,w)=(r_{1}e^{is},r_{2}e^{it})$, the linear
cone $\mathbb{R}_{+}\cdot\omega(z_{0},w_{0})$ has the form
\begin{equation}
\{(r_{1},r_{2})\in\mathbb{R}_{+}^{2}\mid r_{1}=mr_{2}\}\times\{s+t=2\pi h/k\}.\label{eq:convRsubspace}
\end{equation}
Hence Proposition~\ref{prop:DirectionsOfConvergence} translates
to the fact that the modulus component converges to $0$ tangential
to the line $|z|=m|w|$ and the argument component accumulates on
the whole central curve $s+t\equiv2\pi h/k$. Moreover, each value
$m\in(0,+\infty)$ occurs, consistent with the fact that the modulus
component contains lines with any possible slope $m\in(0,+\infty)$.
\end{rem}

\begin{rem}[$d>2$]
\label{rem:orbitsdbig}For $d>2$ and $z\in B_{h}$, the punctured
linear cone $\mathbb{R}_{+}\cdot\omega(z)$ still forms a real $d$-dimensional
submanifold of $\mathbb{C}^{d}$, but its closure $\mathbb{R}_{0}^{+}\cdot\omega(z)$,
has a singularity at $0$. In fact $\mathbb{R}_{+}\cdot\omega(z)$
is not even contained in any proper real subspace of $\mathbb{C}^{d}$.
\end{rem}

\begin{proof}
Let $(v^{1},\ldots,v^{d})=v(z)$ and $\zeta$ a primitive $d-1$-st
root of unity. Then any real subspace containing $\mathbb{R}_{+}\cdot\omega(z)$
already contains
\begin{align*}
\frac{1}{d-1}\sum_{m=1}^{d-1}(1,\zeta^{m},\ldots,\zeta^{m},\zeta^{m}e^{2\pi ih/k}) & =e_{1},\\
\frac{1}{d-1}\sum_{j=1}^{d-1}(i,\zeta^{m},\ldots,\zeta^{m},-i\zeta^{m}e^{2\pi ih/k}) & =ie_{1},
\end{align*}
and similarly $e_{j}$ and $ie_{j}$ for $j=2,\ldots,d$, hence it
has to be $\mathbb{C}^{d}$.
\end{proof}
In particular, the above remarks imply:
\begin{cor}
\label{cor:NoComplexTangentOrbits}No orbit of $F$ inside the basins
$B_{0},\ldots,B_{k-1}$ as in Theorem~\ref{thm:BZ} converges to
$0$ tangent to a proper complex subspace of $\mathbb{C}^{d}$.
\end{cor}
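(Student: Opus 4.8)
The plan is to deduce the statement directly from the description of the accumulation set of directions in Proposition~\ref{prop:DirectionsOfConvergence}, combined with the linear-algebraic observations already recorded in Remarks~\ref{rem:Orbitsd2} and \ref{rem:orbitsdbig}.

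First I would fix $h\in\{0,\ldots,k-1\}$, a point $z\in B_h$, and write $z_n:=F^n(z)$. Suppose, for a contradiction, that $\{z_n\}_n$ converges to $0$ tangent to some complex subspace $L\subsetneq\mathbb{C}^d$; by definition this means $d(z_n/\norm{z_n}_2,L)\to 0$, equivalently every accumulation point of the sequence of directions $\{z_n/\norm{z_n}_2\}_n$ lies in $L$. By Proposition~\ref{prop:DirectionsOfConvergence}, the set of such accumulation points is exactly $\omega(z)$, and since $|w|=v(z)$ is a unit vector for every $w\in\omega(z)$, the elements of $\omega(z)$ are themselves unit vectors. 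Hence $\omega(z)\subseteq L$.

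It then remains to note that $\omega(z)$ lies in no proper complex subspace of $\mathbb{C}^d$. For $d>2$ this is immediate from Remark~\ref{rem:orbitsdbig}: it asserts that $\mathbb{R}_+\cdot\omega(z)$ is not contained in any proper \emph{real} subspace, so $\omega(z)$ is a fortiori not contained in any proper complex subspace. For $d=2$, Remark~\ref{rem:Orbitsd2} identifies $\mathbb{R}_0^+\cdot\omega(z)$ with the totally real plane $\{z^1=me^{2\pi ih/k}\,\overline{z^2}\}$ (with $m=v(z)\in(0,+\infty)$), whose only complex-line intersections are transversal, so it too is contained in no proper complex subspace of $\mathbb{C}^2$. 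In either case $\omega(z)\subseteq L$ forces $L=\mathbb{C}^d$, contradicting $L\subsetneq\mathbb{C}^d$, and the corollary follows.

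I do not anticipate a genuine obstacle here: all of the substantive content is already packaged in Proposition~\ref{prop:DirectionsOfConvergence} and the two remarks, and the only point requiring care is to match the notion of \emph{converging tangent to a complex subspace} with the accumulation-of-directions formulation used above; once that identification is pinned down the argument is purely formal.
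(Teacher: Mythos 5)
Your argument is correct and coincides with the paper's own proof: both deduce from Proposition~\ref{prop:DirectionsOfConvergence} that a tangent complex subspace would have to contain $\omega(z)$ (equivalently $\mathbb{R}^{+}\cdot\omega(z)$), and then invoke Remarks~\ref{rem:Orbitsd2} and \ref{rem:orbitsdbig} to rule out any proper complex subspace. The only difference is that you spell out the definitional step equating tangential convergence with accumulation of directions, which the paper leaves implicit.
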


\begin{proof}
Assume $z\in B_{h}$ and $\{z_{n}\}_{n}$ converges to $0$ tangent
to a complex subspace $V\subseteq\mathbb{C}^{d}$. Then by Proposition~\ref{prop:DirectionsOfConvergence}
$V$ has to contain the linear cone $\mathbb{R}^{+}\cdot\omega(z)$.
Thus Remarks~\ref{rem:Orbitsd2} and \ref{rem:orbitsdbig} imply
that $V=\mathbb{C}^{d}$.
\end{proof}
Remark~\ref{rem:OrbitsIntro} now follows from Proposition~\ref{prop:DirectionsOfConvergence}
and Corollary~\ref{cor:NoComplexTangentOrbits}.

\subsection{\label{subsec:Global-Basins}Geometry of the global basins}

By jet-interpolation, we may choose $F$ to be a global automorphism
of $\mathbb{C}^{d}$. We then use a variant of the coordinates on
each local basin from Section~\ref{subsec:Fatou-coordinates} that
extends to a biholomorphism from the corresponding global basin to
$\mathbb{C}\times(\mathbb{C}^{*})^{d-1}$.

We use the following result from \cite{Weickert1998AttractingbasinsforautomorphismsofbfC2}
and \cite[Corollary~2.2]{Forstneriv1999InterpolationbyholomorphicautomorphismsandembeddingsinmathbbCn}:
\begin{thm}
\label{thm:JetInterp}For every invertible germ of endomorphisms $F_{0}$
of $\mathbb{C}^{d}$ at the origin and every $l\in\mathbb{N}$, there
exists an automorphism $F\in\Aut(\mathbb{C}^{d})$ such that $F(z)=F_{0}(z)+O\paren{{{\norm z}}^{l}}.$
\end{thm}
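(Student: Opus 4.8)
The plan is to realise the $l$-jet of $F_0$ at the origin by a finite composition of \emph{global} automorphisms, in the spirit of Andersén--Lempert theory; this is essentially the mechanism behind the quoted references. We may assume $d\ge2$ (for $d=1$, $\Aut(\mathbb{C})$ is only the affine group and the statement in fact fails -- but this case never occurs in our setting), and, after composing with a translation, that $F_0(0)=0$, so that the $l$-jet $j^l_0(F_0)$ lies in the group $G_l$ of $l$-jets at $0$ of germs in $\Aut(\mathbb{C}^d,0)$ under composition. This $G_l$ is a finite-dimensional connected complex Lie group: the linear-part homomorphism presents it as an extension of the connected group $\mathrm{GL}_d(\mathbb{C})$ by the unipotent, hence connected, normal subgroup of jets with identity linear part. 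Its Lie algebra $\mathfrak{g}_l$ is the space of $l$-jets at $0$ of polynomial vector fields $\sum_j a_j\,\partial_j$ with $a_j(0)=0$.

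First I would record that the \emph{shear fields} $f\bigl(z^1,\dots,\widehat{z^j},\dots,z^d\bigr)\partial_j$ with $f$ polynomial and $f(0)=0$, and the \emph{overshear fields} $z^j f\bigl(z^1,\dots,\widehat{z^j},\dots,z^d\bigr)\partial_j$ (which together already include all linear fields), are complete: their time-$t$ flows $\Phi^X_t$ are polynomial, resp.\ entire, automorphisms of $\mathbb{C}^d$ fixing the origin, and $t\mapsto j^l_0(\Phi^X_t)$ is a one-parameter subgroup of $G_l$ with infinitesimal generator $j^l_0(X)\in\mathfrak{g}_l$. The one genuinely non-trivial input is the Andersén--Lempert theorem -- the density property of $\mathbb{C}^d$ -- that these fields Lie-generate the full space of polynomial vector fields on $\mathbb{C}^d$; hence their $l$-jets Lie-generate $\mathfrak{g}_l$.

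Then I would invoke the standard fact that a connected Lie group $G$ is generated, as an abstract group, by the one-parameter subgroups attached to any subset of $\operatorname{Lie}(G)$ that Lie-generates $\operatorname{Lie}(G)$: the subgroup they generate is an integral subgroup with full Lie algebra, hence open, hence closed, hence all of $G$. Applying this with the $l$-jets of the complete fields above, I obtain $j^l_0(F_0)=j^l_0\bigl(\Phi^{X_1}_{t_1}\circ\cdots\circ\Phi^{X_N}_{t_N}\bigr)$ for finitely many such fields $X_i$ and parameters $t_i\in\mathbb{C}$. Setting $F:=\Phi^{X_1}_{t_1}\circ\cdots\circ\Phi^{X_N}_{t_N}\in\Aut(\mathbb{C}^d)$ and undoing the initial translation gives $j^l_0(F)=j^l_0(F_0)$, i.e.\ $F(z)=F_0(z)+O(\norm{z}^{l+1})$, which is even slightly stronger than claimed.

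The main obstacle is precisely the Andersén--Lempert generation statement; the remaining ingredients -- that the relevant flows are global automorphisms, and the connectedness of $G_l$ -- are soft, so for the present paper it suffices to cite it, as is done through \cite{Weickert1998AttractingbasinsforautomorphismsofbfC2} and \cite[Corollary~2.2]{Forstneriv1999InterpolationbyholomorphicautomorphismsandembeddingsinmathbbCn}. A more hands-on variant matches the jet degree by degree: once a global automorphism agrees with $F_0$ up to order $m$, the leading discrepancy is a perturbation $\mathrm{id}+P$ with $P$ homogeneous of degree $m+1$, and the time-one flow of the polynomial field $\sum_j P^j\partial_j$ agrees with $\mathrm{id}+P$ to order $m+1$, improving the match by one order; but realising that flow as a composition of flows of complete fields modulo higher order -- via the Baker--Campbell--Hausdorff formula -- again reduces to the same Andersén--Lempert input.
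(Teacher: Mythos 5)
The paper does not prove this theorem: it is stated as a direct citation of \cite{Weickert1998AttractingbasinsforautomorphismsofbfC2} and of \cite[Corollary~2.2]{Forstneriv1999InterpolationbyholomorphicautomorphismsandembeddingsinmathbbCn}, so there is no in-text argument to compare against. Your reconstruction of the Andersén--Lempert mechanism behind those references is accurate and its outline is sound: the jet group $G_l$ is a connected finite-dimensional complex Lie group, shears and overshears have global polynomial (resp.\ entire) flows fixing the origin, their $l$-jets Lie-generate $\mathfrak{g}_l$, and a Lie-generating family of one-parameter subgroups generates a connected Lie group. You rightly isolate the Andersén--Lempert input as the only non-soft step, and you rightly note that the statement requires $d\ge2$ --- a hypothesis the theorem leaves tacit here, though it holds in all its applications in the paper.

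One small point worth tightening: the Andersén--Lempert theorem is usually quoted for \emph{all} shears and overshears, translations included, whereas to stay inside $G_l$ you restrict to fields vanishing at the origin. That the restricted family still Lie-generates the subalgebra of origin-vanishing polynomial fields is true but is a (mild) variant of the usual statement and deserves a sentence; for instance $[(z^1)^2\partial_2,\;z^1z^2\partial_1]=(z^1)^3\partial_1-2(z^1)^2z^2\partial_2$ recovers the monomial $(z^1)^3\partial_1$ from two shears and an overshear, and the general monomial $z^\alpha\partial_j$ with $\alpha_j\ge2$ follows by the same kind of bracket induction. With that caveat supplied, the argument is correct, and your concluding degree-by-degree alternative is equally valid and, as you observe, reduces to the same generation input.
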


For $F_{0}=F_{{\rm N}}$ and $l\in\mathbb{N}$ as in Theorem~\ref{thm:BZ},
this implies that there exist biholomorphisms $F$ of $\mathbb{C}^{d}$
of the form
\begin{equation}
F(z)=F_{{\rm N}}(z)+O\paren{{{\norm z}}^{l}}\label{eq:BiholWithTail}
\end{equation}
 with local attracting basins $B_{0},\ldots,B_{k-1}$.
\begin{rem}
For $F\in\Aut(\mathbb{C}^{d})$ of the form (\ref{eq:BiholWithTail}),
the global basins $\Omega_{0},\ldots,\Omega_{k-1}$ are growing unions
of biholomorphic preimages of $B_{0},\ldots,B_{k-1}$. As such they
are still pairwise disjoint and open, invariant and attracted to $0$
under $F$, and homotopy equivalent to $(S^{1})^{d-1}$.
\end{rem}

To show that these global basins are in fact biholomorphic to $\mathbb{C}\times(\mathbb{C}^{*})^{d-1}$,
we wish to extend the coordinates from the previous section to the
global basins via their functional equations (\ref{eq:FatouEq}) and
(\ref{eq:SigmaEq}). However, the equation (\ref{eq:SigmaEq}) involves
division by $\psi+1$, which has zeros. In \cite{BracciRaissyStensonesAutomorphismsofmathbbCkwithaninvariantnonrecurrentattractingFatoucomponentbiholomorphictomathbbCtimesmathbbCk1}
this problem is circumvented by restricting to an exhausting sequence
of subsets of $\Omega_{h}$ and constructing a fibre bundle biholomorphic
to $\Omega_{h}$ with total space $\mathbb{C}\times(\mathbb{C}^{*})^{d-1}$.
We will instead replace $\sigma_{j}$ by a coordinate with a simpler
functional equation, that allows for global extension (compare \cite{Reppekus2019PuncturednonrecurrentSiegelcylindersinautomorphismsofmathbbC2}):
\begin{cor}
\label{cor:FatouTauExtra}Assume the setting of Proposition~\ref{prop:FatouInjective}.
For $2\le j\le d$, the map $\tau_{j}=\sqrt[kd]{\psi}\sigma_{j}:B_{h}\to\mathbb{C}^{*}$
is well-defined and satisfies
\begin{align}
\tau_{j}\circ F & =\lambda_{j}\tau_{j}.\label{eq:tauEq}
\end{align}
Moreover, the map $(\psi,\tau_{2},\ldots,\tau_{d})$ is injective
on $B_{h}(R_{1},\theta_{1},\beta_{1})$ and its image contains the
set
\begin{equation}
\{(U,w')\in H(R_{2},\theta_{2})\times\mathbb{C}^{d-1}\mid|U|^{(\beta_{2}-1/d)/k}<|w_{2}\cdots w_{d}|,\norm{w'}_{\infty}<|U|^{(1/d-\beta_{2})/k}\}.\label{eq:insideclaimTau}
\end{equation}
\end{cor}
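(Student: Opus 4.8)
The plan is to deduce everything from Propositions~\ref{prop:PsiDefAndInj}, \ref{prop:SigmaFatouExtra}, and \ref{prop:FatouInjective} by a change of the last $d-1$ coordinates.

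\textbf{Well-definedness and the functional equation.} First I would note that $\psi:B_h\to\mathbb C^*$ with $\Re\psi>0$, so $\sqrt[kd]{\psi}$ is well-defined in the principal branch and $\tau_j=\sqrt[kd]{\psi}\,\sigma_j:B_h\to\mathbb C^*$ is a well-defined nowhere-vanishing holomorphic map. For the functional equation, I would combine \eqref{eq:FatouEq} and \eqref{eq:SigmaEq}: since $\psi\circ F=\psi+1$,
\[
\tau_j\circ F=\sqrt[kd]{\psi\circ F}\,(\sigma_j\circ F)=\sqrt[kd]{\psi+1}\cdot\lambda_j\sigma_j\sqrt[kd]{\tfrac{\psi}{\psi+1}}=\lambda_j\sqrt[kd]{\psi}\,\sigma_j=\lambda_j\tau_j,
\]
where the only subtlety is checking that the principal-branch roots multiply correctly; this holds because $\Re\psi>0$ and $\Re(\psi+1)>0$ keep all three radicands in the right half-plane, where $\sqrt[kd]{\cdot}$ is a genuine homomorphism up to the positive factor, so no branch ambiguity arises. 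That gives \eqref{eq:tauEq}.

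\textbf{Injectivity and the image.} The point is that $(\psi,\tau_2,\ldots,\tau_d)=\Phi\circ\phi$ where $\phi=(\psi,\sigma_2,\ldots,\sigma_d)$ is the map of Proposition~\ref{prop:FatouInjective} and $\Phi(U,w_2,\ldots,w_d):=(U,\sqrt[kd]{U}\,w_2,\ldots,\sqrt[kd]{U}\,w_d)$ — note that on $\phi(B_h)$ the first coordinate equals $\psi(z)$, which has positive real part, so $\sqrt[kd]{\psi}$ in the definition of $\tau_j$ agrees with $\sqrt[kd]{U}$ along the image. Since $\Phi$ is a biholomorphism of $\{U:\Re U>0\}\times\mathbb C^{d-1}$ onto itself (with inverse $(U,v_2,\ldots,v_d)\mapsto(U,U^{-1/kd}v_2,\ldots,U^{-1/kd}v_d)$), injectivity of $(\psi,\tau_2,\ldots,\tau_d)$ on $B_h(R_1,\theta_1,\beta_1)$ follows immediately from injectivity of $\phi$ there. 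For the image, I would apply $\Phi$ to the set $T(R_2,\theta_2,\beta_2)$ from \eqref{eq:insideclaim}: a point $(U,z')\in T(R_2,\theta_2,\beta_2)$ satisfies $|U|^{(\beta_2-1)/k}<|z^2\cdots z^d|$ and $\|z'\|_\infty<|U|^{-\beta_2/k}$, and under $w_j=\sqrt[kd]{U}\,z^j$ (so $|w_j|=|U|^{1/(kd)}|z^j|$) these become $|U|^{(\beta_2-1)/k+(d-1)/(kd)}<|w_2\cdots w_d|$, i.e.\ $|U|^{(\beta_2-1/d)/k}<|w_2\cdots w_d|$, and $\|w'\|_\infty<|U|^{-\beta_2/k+1/(kd)}=|U|^{(1/d-\beta_2)/k}$. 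That is exactly the set \eqref{eq:insideclaimTau}, and since $T(R_2,\theta_2,\beta_2)\subseteq\phi(B_h)$ we get \eqref{eq:insideclaimTau}$\subseteq\Phi(\phi(B_h))=(\psi,\tau_2,\ldots,\tau_d)(B_h)$.

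\textbf{Main obstacle.} The only thing requiring real care is the branch bookkeeping for $\sqrt[kd]{\cdot}$ in the functional equation — making sure that $\sqrt[kd]{\psi+1}\cdot\sqrt[kd]{\psi/(\psi+1)}=\sqrt[kd]{\psi}$ literally holds for the principal branches, rather than only up to a root of unity. This is fine because $\psi$, $\psi+1$, and $\psi/(\psi+1)$ all lie in the right half-plane (the last because $\Re(\psi/(\psi+1))=|\psi+1|^{-2}\Re(\psi(\overline\psi+1))=|\psi+1|^{-2}(|\psi|^2+\Re\psi)>0$), where $\zeta\mapsto\zeta^{1/kd}$ is holomorphic and multiplicative; but it is worth stating explicitly. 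Everything else is a direct substitution, so the corollary follows.
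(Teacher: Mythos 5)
Your proposal is correct and follows essentially the same route as the paper: factor $(\psi,\tau_2,\ldots,\tau_d)$ through $\phi$ via the biholomorphism $(\zeta,\xi)\mapsto(\zeta,\sqrt[kd]{\zeta}\,\xi)$ of the half-plane times $\mathbb{C}^{d-1}$, and derive \eqref{eq:tauEq} by substituting \eqref{eq:FatouEq} and \eqref{eq:SigmaEq}. The only difference is that you spell out the branch bookkeeping (including $\Re(\psi/(\psi+1))>0$) and verify the image identity by direct computation, details the paper leaves implicit.
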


\begin{proof}
Fix $2\le j\le d$. Since $\Re\psi>0$, the root $\sqrt[kd]{\psi}$
is well-defined. (\ref{eq:tauEq}) follows directly from (\ref{eq:SigmaEq}):
\begin{align*}
\tau_{j}\circ F & =\sqrt[kd]{\psi\circ F}\cdot\sigma_{j}\circ F=\sqrt[kd]{\psi+1}\lambda_{j}\sigma_{j}\sqrt[kd]{\frac{\psi}{\psi+1}}=\lambda_{j}\tau_{j}.
\end{align*}
Injectivity of $(\psi,\tau_{2},\ldots,\tau_{d})$ and (\ref{eq:insideclaimTau})
follow from Proposition~\ref{prop:FatouInjective}, since $(\zeta,\xi)\mapsto(\zeta,\sqrt[kd]{\zeta}\xi)$
is well-defined and injective for $\Re\zeta>0$ and $\xi\in\mathbb{C}^{d-1}$,
and (\ref{eq:insideclaimTau}) is the image of $T(R_{2},\theta_{2},\beta_{2})$
under that map.
\end{proof}
Now if $F$ is an automorphism, this new system of coordinates extends
indefinitely:
\begin{prop}
\label{prop:FatouBihol}Let $F$ be an automorphism of the form (\ref{eq:BiholWithTail}),
$B_{h}$ as in Theorem~\ref{thm:BZ}, $\Omega_{h}=\bigcup_{n}F^{-n}(B_{h})$,
and $\psi,\tau_{2},\ldots,\tau_{d}:B_{h}\to\mathbb{C}^{*}$ as in
Proposition~\ref{prop:PsiDefAndInj} and Corollary~\ref{cor:FatouTauExtra}.
Let $\hat{\psi}:\Omega_{h}\to\mathbb{C}$ and $\hat{\tau}_{2},\ldots,\hat{\tau}_{d}:\Omega_{h}\to\mathbb{C}^{*}$
be given by
\[
\hat{\psi}(z)=\psi(F^{n}(z))-n
\]
and
\[
\hat{\tau}_{j}(z)=\lambda_{j}^{-n}\tau_{j}(F^{n}(z))\quad\text{for }2\le j\le d
\]
for $z\in F^{-n}(z)$ and $n\in\mathbb{N}$. Then
\[
\hat{\phi}=(\hat{\psi},\hat{\tau}_{2},\ldots,\hat{\tau}_{d}):\Omega_{h}\to\mathbb{C}\times(\mathbb{C}^{*})^{d-1}
\]
is a well-defined biholomorphism. In particular $\Omega_{h}$ is biholomorphic
to $\mathbb{C}\times(\mathbb{C}^{*})^{d-1}$.
\end{prop}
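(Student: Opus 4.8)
The plan is to check, in turn, that $\hat{\phi}$ is well defined, holomorphic, equivariant, injective and surjective, and then to conclude with the standard fact that a holomorphic bijection between open subsets of $\mathbb{C}^{d}$ has holomorphic inverse. First I would treat well-definedness, holomorphy and equivariance together. For $z\in\Omega_{h}$ there is $N\in\mathbb{N}$ with $F^{N}(z)\in B_{h}$, and then $F^{n}(z)\in B_{h}$ for all $n\ge N$ because $F(B_{h})\subseteq B_{h}$; for such $n$, the functional equations (\ref{eq:FatouEq}) and (\ref{eq:tauEq}) give $\psi(F^{n+1}(z))-(n+1)=\psi(F^{n}(z))-n$ and $\lambda_{j}^{-(n+1)}\tau_{j}(F^{n+1}(z))=\lambda_{j}^{-n}\tau_{j}(F^{n}(z))$, so $\hat{\psi}$ and the $\hat{\tau}_{j}$ do not depend on the choice of $n\ge N$; taking $n=0$ shows $\hat{\phi}|_{B_{h}}=(\psi,\tau_{2},\dots,\tau_{d})$ and that each $\hat{\tau}_{j}$ is nowhere zero. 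Holomorphy is local: near any point of $\Omega_{h}$ one fixes $n$ with $F^{n}$ mapping a neighbourhood into $B_{h}$, where $\hat{\psi}=\psi\circ F^{n}-n$ and $\hat{\tau}_{j}=\lambda_{j}^{-n}\tau_{j}\circ F^{n}$ are holomorphic. The same bookkeeping gives $\hat{\phi}\circ F=L\circ\hat{\phi}$, where $L(\zeta^{1},\dots,\zeta^{d}):=(\zeta^{1}+1,\lambda_{2}\zeta^{2},\dots,\lambda_{d}\zeta^{d})$ is an automorphism of $\mathbb{C}\times(\mathbb{C}^{*})^{d-1}$; since $\Omega_{h}$ is completely invariant (Remark~\ref{rem:GlobalBasinProperties}), it also gives $\hat{\phi}\circ F^{-n}=L^{-n}\circ\hat{\phi}$ for all $n$.

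For injectivity, suppose $\hat{\phi}(z)=\hat{\phi}(w)$ with $z,w\in\Omega_{h}$. Then $\hat{\phi}(F^{n}z)=L^{n}\hat{\phi}(z)=L^{n}\hat{\phi}(w)=\hat{\phi}(F^{n}w)$ for every $n$, and for $n$ large both $F^{n}z$ and $F^{n}w$ lie in $B_{h}(R_{1},\theta_{1},\beta_{1})$: their orbits are eventually contained in $B_{h}\subseteq W(\beta_{0})$ with $\beta_{0}(l+d-1)>2k+1$, so the conclusions of Proposition~\ref{prop:stableOrbitBehaviour}, in particular Part~(\ref{enu:OB3Bh}), hold for these orbits even without the Brjuno condition on subsets (cf.\ Remark~\ref{rem:stableOrbitBehaviourInBasins}). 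On $B_{h}(R_{1},\theta_{1},\beta_{1})$ the map $\hat{\phi}$ coincides with $(\psi,\tau_{2},\dots,\tau_{d})$, which is injective there by Corollary~\ref{cor:FatouTauExtra}; hence $F^{n}z=F^{n}w$, and since $F$ is an automorphism, $z=w$.

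Surjectivity is where the actual work lies. Given a target $\zeta=(\zeta^{1},\zeta^{2},\dots,\zeta^{d})\in\mathbb{C}\times(\mathbb{C}^{*})^{d-1}$, I would flow it forward under the model map: $L^{n}(\zeta)=(\zeta^{1}+n,\lambda_{2}^{n}\zeta^{2},\dots,\lambda_{d}^{n}\zeta^{d})$. As $n\to\infty$, one has $\Re(\zeta^{1}+n)\to\infty$ and $\arg(\zeta^{1}+n)\to0$, so $\zeta^{1}+n\in H(R_{2},\theta_{2})$ eventually and $|\zeta^{1}+n|\to\infty$, while $|\lambda_{j}^{n}\zeta^{j}|=|\zeta^{j}|$ stays a fixed non-zero constant for $j\ge2$. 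Since $\beta_{2}<1/d$, the exponents $(\beta_{2}-1/d)/k$ and $(1/d-\beta_{2})/k$ are negative and positive respectively, so for $n$ large $|\zeta^{1}+n|^{(\beta_{2}-1/d)/k}<|\zeta^{2}\cdots\zeta^{d}|$ and $\max_{j\ge2}|\zeta^{j}|<|\zeta^{1}+n|^{(1/d-\beta_{2})/k}$; that is, $L^{n}(\zeta)$ lies in the set (\ref{eq:insideclaimTau}), which by Corollary~\ref{cor:FatouTauExtra} is contained in $(\psi,\tau_{2},\dots,\tau_{d})(B_{h})=\hat{\phi}(B_{h})$. Picking $z'\in B_{h}$ with $\hat{\phi}(z')=L^{n}(\zeta)$ and setting $z:=F^{-n}(z')\in\Omega_{h}$, equivariance gives $\hat{\phi}(z)=L^{-n}\hat{\phi}(z')=L^{-n}L^{n}(\zeta)=\zeta$.

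Consequently $\hat{\phi}$ is a holomorphic bijection between the open sets $\Omega_{h}$ and $\mathbb{C}\times(\mathbb{C}^{*})^{d-1}$ in $\mathbb{C}^{d}$, hence a biholomorphism, which proves the proposition. (Composing $\hat{\phi}$ with a further biholomorphism of $\mathbb{C}\times(\mathbb{C}^{*})^{d-1}$ fixing the first coordinate conjugates $L$ to the pure translation $(\zeta^{1},\dots,\zeta^{d})\mapsto(\zeta^{1}+1,\zeta^{2},\dots,\zeta^{d})$ of Theorem~\ref{thm:FatouCCstarMulti}.) The routine steps are essentially forced once the functional equations are used carefully; the one genuine idea is the surjectivity argument---namely, that the target point should be flowed forward under $L$ until it enters the explicit region of Corollary~\ref{cor:FatouTauExtra} and then pulled back by $F^{-n}$.
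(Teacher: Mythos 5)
Your proof is correct and follows essentially the same route as the paper: well-definedness via the functional equations, injectivity by pushing forward to $B_{h}(R_{1},\theta_{1},\beta_{1})$ where $(\psi,\tau_{2},\dots,\tau_{d})$ is injective, and surjectivity by flowing the target forward under the model map $L$ until it lands in the explicit region (\ref{eq:insideclaimTau}) of Corollary~\ref{cor:FatouTauExtra} and then pulling back by $F^{-n}$. The only cosmetic difference is that you make the equivariance $\hat{\phi}\circ F=L\circ\hat{\phi}$ explicit, which is implicit in the paper's bookkeeping.
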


\begin{proof}
Let $m>n$ such that $z\in F^{-n}(z)\subseteq F^{-m}(z)$. Then
\[
\psi(F^{m}(z))-m=\psi(F^{m-n}(F^{n}(z)))-m=\psi(F^{n}(z))-n
\]
and
\[
\lambda_{j}^{-m}\sigma_{j}(F^{m}(z))=\lambda_{j}^{-m}\sigma_{j}(F^{m-n}(F^{n}(z)))=\lambda_{j}^{-n}\sigma_{j}(F^{n}(z)),
\]
so $\hat{\phi}$ is well-defined.

For injectivity, let $z,w\in\Omega_{h}$. Then by Part~(\ref{enu:OB3Bh})
of Proposition~\ref{prop:stableOrbitBehaviour} (and Remark~\ref{rem:stableOrbitBehaviourInBasins})
there exists $n\in\mathbb{N}$ such that $F^{n}(z),F^{n}(w)\in B_{h}(R_{1},\theta_{1},\beta_{1})$.
Now $\hat{\phi}(z)=\hat{\phi}(w)$ implies
\[
\phi(F^{n}(z))=\phi(F^{n}(w)),
\]
and by injectivity of $\phi$ on $B_{h}(R_{1},\theta_{1},\beta_{1})$
and of $F$ on $\mathbb{C}^{d}$, we have $F^{n}(z)=F^{n}(w)$ and
$z=w$, showing that $\hat{\phi}$ is injective.

To show surjectivity, let $(\zeta,\xi')\in\mathbb{C}\times(\mathbb{C}^{*})^{d-1}$.
Then for $n\in\mathbb{N}$ large enough, we have $\zeta+n\in H(R_{2},\theta_{2})$,
\[
|\zeta+n|^{-(\beta_{2}-1/d)/k}<|\xi^{2}\cdots\xi^{d}|\quad\text{and}\quad\norm{\xi'}_{\infty}<|\zeta+n|^{(1/d-\beta_{2})/k},
\]
since $\beta_{2}<1/d$. Hence by (\ref{eq:insideclaimTau}),
\[
(\zeta+n,(\Lambda')^{n}\xi')\in\hat{\phi}(B_{h}),
\]
where $\Lambda':=\diag(\lambda_{2},\cdots,\lambda_{d})$, so there
exists $z\in B_{h}$ such that $\hat{\phi}(z)=(\zeta+n,(\Lambda')^{n}\xi')$
and
\[
\hat{\phi}(F^{-n}(z))=(\zeta,\xi'),
\]
showing surjectivity.
\end{proof}
The second part of Theorem~\ref{thm:FatouCCstarMulti} now follows
from Theorem~\ref{thm:JetInterp} and the following corollary to
Proposition~\ref{prop:FatouBihol}:
\begin{cor}
\label{cor:CylinderCoord}Let $F$ be an automorphism of the form
(\ref{eq:BiholWithTail}), $B_{h}$ as in Theorem~\ref{thm:BZ},
and $\Omega_{h}=\bigcup_{n}F^{-n}(B_{h})$. There exists a biholomorphic
map $\phi_{h}:\Omega_{h}\to\mathbb{C}\times(\mathbb{C}^{*})^{d-1}$
conjugating $F$ to
\begin{equation}
(\zeta,\xi)\mapsto(\zeta+1,\xi).\label{eq:cylCoord}
\end{equation}
\end{cor}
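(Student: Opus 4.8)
The plan is to post-compose the biholomorphism $\hat\phi$ furnished by Proposition~\ref{prop:FatouBihol} with an explicit self-biholomorphism of $\mathbb{C}\times(\mathbb{C}^*)^{d-1}$ that removes the rotational part. Indeed, by Proposition~\ref{prop:FatouBihol} together with the functional equations (\ref{eq:FatouEq}) and (\ref{eq:tauEq}) (which pass to $\hat\psi$ and $\hat\tau_j$ by the very definition of these maps), $\hat\phi=(\hat\psi,\hat\tau_2,\ldots,\hat\tau_d)\colon\Omega_h\to\mathbb{C}\times(\mathbb{C}^*)^{d-1}$ is a biholomorphism with $\hat\psi\circ F=\hat\psi+1$ and $\hat\tau_j\circ F=\lambda_j\hat\tau_j$; that is, $\hat\phi$ conjugates $F$ to the screw map
\[
A(\zeta,\xi_2,\ldots,\xi_d)=(\zeta+1,\lambda_2\xi_2,\ldots,\lambda_d\xi_d)
\]
on $\mathbb{C}\times(\mathbb{C}^*)^{d-1}$. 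It therefore remains only to conjugate $A$ to the pure translation (\ref{eq:cylCoord}).

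For this I would fix, for each $j=2,\ldots,d$, a branch $\log\lambda_j$ of the logarithm (possible since $\lambda_j\neq0$; as $|\lambda_j|=1$ one may just take $\log\lambda_j=i\arg\lambda_j$) and set
\[
\Psi\colon\mathbb{C}\times(\mathbb{C}^*)^{d-1}\to\mathbb{C}\times(\mathbb{C}^*)^{d-1},\qquad
\Psi(\zeta,\xi_2,\ldots,\xi_d)=\bigl(\zeta,\ e^{-\zeta\log\lambda_2}\xi_2,\ldots,e^{-\zeta\log\lambda_d}\xi_d\bigr).
\]
Since each factor $\zeta\mapsto e^{-\zeta\log\lambda_j}$ is entire and nowhere zero, $\Psi$ is a biholomorphism of $\mathbb{C}\times(\mathbb{C}^*)^{d-1}$, with holomorphic inverse $(\zeta,\xi)\mapsto(\zeta,e^{\zeta\log\lambda_2}\xi_2,\ldots,e^{\zeta\log\lambda_d}\xi_d)$. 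A direct computation then shows $\Psi$ conjugates $A$ to the pure translation: in the $j$-th coordinate one has $e^{-(\zeta+1)\log\lambda_j}\cdot\lambda_j\cdot e^{\zeta\log\lambda_j}=e^{-\log\lambda_j}\lambda_j=1$, so that $\Psi\circ A\circ\Psi^{-1}(\zeta,\xi_2,\ldots,\xi_d)=(\zeta+1,\xi_2,\ldots,\xi_d)$.

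Setting $\phi_h:=\Psi\circ\hat\phi$ then yields a biholomorphism $\Omega_h\to\mathbb{C}\times(\mathbb{C}^*)^{d-1}$, and since $\hat\phi\circ F=A\circ\hat\phi$ we get $\phi_h\circ F=\Psi\circ A\circ\hat\phi=(\Psi\circ A\circ\Psi^{-1})\circ\phi_h$, i.e.\ $\phi_h$ conjugates $F$ to the translation (\ref{eq:cylCoord}). I do not expect any real obstacle here: all the analytic substance already resides in Proposition~\ref{prop:FatouBihol}, and what remains is the elementary observation that a ``translation $\times$ rotation'' screw map on $\mathbb{C}\times(\mathbb{C}^*)^{d-1}$ is globally holomorphically conjugate to a pure translation via the shear $\Psi$; the only point to check is that $\Psi$ is globally biholomorphic, which holds because $e^{\pm\zeta\log\lambda_j}$ is entire and zero-free.
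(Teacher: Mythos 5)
Your proof is correct and is essentially identical to the paper's: the paper also post-composes $\hat\phi$ with the shear $\eta(\zeta,\xi)=(\zeta,(\Lambda')^{-\zeta}\xi)$, which is precisely your $\Psi$ written as a matrix power rather than coordinatewise exponentials. There is no substantive difference.
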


\begin{proof}
The biholomorphic map $\hat{\phi}$ from Proposition~\ref{prop:FatouBihol}
conjugates $F$ to
\begin{equation}
(\zeta,\xi)\mapsto(\zeta+1,\Lambda'\xi),\label{eq:cylinderRot}
\end{equation}
where $\Lambda':=\diag(\lambda_{2},\cdots,\lambda_{d})$. The map
\[
\eta:\mathbb{C}\times(\mathbb{C}^{*})^{d-1}\to\mathbb{C}\times(\mathbb{C}^{*})^{d-1},\quad\eta(\zeta,\xi)=(\zeta,(\Lambda')^{-\zeta}\xi)
\]
is biholomorphic and well-defined up to choice of a logarithm of the
invertible matrix $\Lambda'$ and further conjugates (\ref{eq:cylinderRot})
to (\ref{eq:cylCoord}), so $\phi_{h}:=\eta\circ\hat{\phi}$ has the
required properties.
\end{proof}

\section{\label{sec:Periodic-cycles}Periodic cycles}

In this section we prove Theorem~\ref{thm:FatouCCstarPeriodic} and
Corollary~\ref{cor:CCstarVariants} via explicit construction. We
first show the existence of ``roots up to order $l\in\mathbb{N}$''
for one-resonant germs:
\begin{lem}
\label{lem:RootGerm}Let $d,k\in\mathbb{N}^{*}$ and $F_{0}\in\Aut(\mathbb{C}^{d},0)$
be be one-resonant of index $\alpha$ of the form
\[
F_{0}(z)=\Lambda z(1+cz^{k\alpha}),
\]
where $c\in\mathbb{C}\backslash\{0\}$ and $\Lambda=\diag(\lambda_{1},\ldots,\lambda_{d})$.
Then for every $p\in\mathbb{N}^{*}$ dividing $k$ and $l\in\mathbb{N}$
there exists a germ $F_{p}\in\Aut(\mathbb{C}^{d},0)$, one resonant
of index $p\alpha$, of the form
\[
F_{p}(z)=M_{p}z\paren{1+c/pz^{k\alpha}}+O(\norm z^{2k|\alpha|+1}),
\]
where $M_{p}=\diag(\mu_{1},\ldots,\mu_{d})$ is such that $M_{p}^{p}=\Lambda$
and $\mu_{1}\cdots\mu_{d}=\zeta_{p}:=e^{2\pi i/p}$, such that for
all germs $F$ such that $F(z)=F_{p}(z)+O(\norm z^{l})$, the $p$-th
iterate $F^{p}$ has the form
\[
F^{p}(z)=F_{0}(z)+O(\norm z^{l}).
\]
\end{lem}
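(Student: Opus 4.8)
The plan is to construct $F_p$ by first choosing the linear part and then correcting the higher-order terms. For the linear part, since $\Lambda = \diag(\lambda_1,\dots,\lambda_d)$ with $\lambda_1\cdots\lambda_d = 1$, I pick $p$-th roots $\mu_j$ of $\lambda_j$; the freedom in choosing branches lets me arrange $\mu_1\cdots\mu_d = \zeta_p = e^{2\pi i/p}$ (indeed $(\mu_1\cdots\mu_d)^p = 1$, so the product is some $p$-th root of unity, and multiplying one $\mu_j$ by a suitable power of $\zeta_p$ adjusts it to exactly $\zeta_p$). Then $M_p = \diag(\mu_1,\dots,\mu_d)$ satisfies $M_p^p = \Lambda$. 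One checks that $F_p$ is one-resonant of index $p\alpha$: the resonance relations $\mu^m = \mu_j$ translate, after raising to the $p$-th power, into $\lambda^m = \lambda_j$, i.e. $m = k\alpha + e_j$; combined with the condition $\mu_1\cdots\mu_d = \zeta_p$ being a primitive $p$-th root of unity, this forces $m = (k/p)\cdot p\alpha + e_j$, which is exactly the one-resonance condition for index $p\alpha$ (here I use that $p \mid k$).

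Next I would compute the $p$-th iterate of the ``model'' part $\tilde F_p(z) := M_p z(1 + \tfrac{c}{p} z^{k\alpha})$ modulo terms of order $2k|\alpha|+1$. The key observation is that the monomial $z^{k\alpha}$ is resonant-invariant in the sense that $(\tilde F_p(z))^{k\alpha}$ equals $(\mu_1\cdots\mu_d)^{k|\alpha|}\cdot z^{k\alpha}$ up to higher order, and $(\mu_1\cdots\mu_d)^{k|\alpha|} = \zeta_p^{k|\alpha|}$. Wait — I should be more careful: the relevant exponent is such that $z^{k\alpha}$ has total degree $k|\alpha|$, but the multiplier picked up is $\mu^{k\alpha} = (\mu_1\cdots\mu_d)^k = \zeta_p^k$. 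Since $p \mid k$, $\zeta_p^k = 1$, so $z^{k\alpha}$ is a genuine invariant of the linear action up to the relevant order. Therefore iterating $\tilde F_p$ a telescoping computation gives $\tilde F_p^p(z) = M_p^p z\,(1 + p\cdot\tfrac{c}{p} z^{k\alpha}) + O(\norm z^{2k|\alpha|+1}) = \Lambda z(1 + c z^{k\alpha}) + O(\norm z^{2k|\alpha|+1}) = F_0(z) + O(\norm z^{2k|\alpha|+1})$. This is the heart of why the prefactor $c/p$ is the right normalization.

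Finally, to upgrade from ``agrees with $F_0$ up to order $2k|\alpha|+1$'' to ``for every $F = F_p + O(\norm z^l)$ we have $F^p = F_0 + O(\norm z^l)$'', I would argue degree by degree on the Taylor jets. Writing $F = \tilde F_p + (\text{terms of degree } \geq 2k|\alpha|+1)$ and forming $F^p$, the $l$-jet of $F^p$ depends only on the $l$-jet of $F$ (chain rule / composition of jets), hence only on $F_p$, so I may replace $F$ by $F_p$ and compute the $l$-jet of $F_p^p$. By the previous paragraph this $l$-jet differs from that of $F_0$ only possibly in degrees $\geq 2k|\alpha|+1$ — but I actually need to \emph{define} the degrees $2k|\alpha|+1,\dots,l-1$ terms of $F_p$ so as to kill that discrepancy. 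This is the real content: I use that the map ``$(l\text{-jet of }F_p) \mapsto (l\text{-jet of }F_p^p)$'' is, on the space of jets with fixed linear part $M_p$, an affine map whose derivative in each new degree $m$ (for $2k|\alpha|+1 \le m < l$) is $\xi \mapsto \sum_{i=0}^{p-1}(M_p^{p-1-i})\,\xi\,(dF_0)_0^{\,i}\text{-type expression}$, i.e. an invertible operator $L_m$ away from resonances of the $p$-fold composition — and $M_p^p = \Lambda$ has eigenvalues $\lambda^m - \lambda_j$ on these degrees which are nonzero precisely by one-resonance of index $\alpha$ for $(2k|\alpha|+1 \le m)$... the only resonant degrees are the $z^{q\alpha + e_j}$ monomials, and at degree $m$ with $2k|\alpha| < m < l$ no such monomial other than already-present ones occurs unless $m = qk|\alpha| + 1$ type — so $L_m$ is invertible and I solve recursively for the degree-$m$ part of $F_p$ to cancel the discrepancy inherited from lower degrees. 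The main obstacle, and the step deserving the most care, is exactly this inductive jet-correction: verifying that at each degree $m$ with $2k|\alpha|+1 \le m < l$ the linearized operator governing how a new homogeneous term of $F_p$ affects $F_p^p$ is invertible (which rests on the one-resonance hypothesis together with $p \mid k$ excluding new resonances in that range), so that the discrepancy can be recursively eliminated; the actual algebra of the telescoping sum for $\tilde F_p^p$ and the jet-composition bookkeeping are routine by comparison.
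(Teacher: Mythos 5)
Your strategy differs from the paper's: the paper reduces the problem to a two-coefficient match followed by a single invocation of the Bracci--Zaitsev normal-form theorem, whereas you attempt a direct degree-by-degree jet correction. The paper first constructs $F_1(z)=M_pz(1+az^{k\alpha}+bz^{2k\alpha})$ and chooses $a=c/p$ \emph{and} a specific value of $b$ so that $F_1^p=F_0+O(z^{3k\alpha})$ --- both the $z^{k\alpha+e_j}$ coefficient and the $z^{2k\alpha+e_j}$ coefficient (the latter being a formal invariant for one-resonant germs) are matched --- and then applies \cite[Theorem~3.6]{BracciZaitsev2013Dynamicsofoneresonantbiholomorphisms} to get $\chi$ with $\chi\circ F_1^p\circ\chi^{-1}=F_0+O(\|z\|^l)$, finally setting $F_p:=\chi\circ F_1\circ\chi^{-1}$. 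That construction sidesteps the entire issue of solving homological equations degree by degree, because one conjugation handles all higher degrees at once.

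Your inductive correction is plausible in spirit but has a genuine gap. The linearised operator governing how a degree-$m$ homogeneous addition $P_m$ to $F_p$ perturbs the degree-$m$ jet of $F_p^p$ is
\[
L_m(P_m)(z)=\sum_{i=0}^{p-1}M_p^{\,p-1-i}\,P_m(M_p^{\,i}z),
\]
not the expression you wrote involving $(dF_0)_0^i=\Lambda^i$ (the step-by-step linearisation of the $p$-fold composite of $F_p$ is $M_p$ at each step, not $\Lambda$). On a monomial $z^\beta e_j$ this multiplies by $\mu_j^{p-1}\sum_{i=0}^{p-1}(\mu^\beta/\mu_j)^i$, which \emph{vanishes} exactly when $\mu^\beta/\mu_j$ is a nontrivial $p$-th root of unity, i.e.\ when $\beta=q\alpha+e_j$ with $p\nmid q$. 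Such $\beta$ occur at infinitely many degrees $m=q|\alpha|+1$ in your induction range, so $L_m$ is \emph{not} invertible --- your assertion that one-resonance of index $\alpha$ ``excludes new resonances in that range'' is false. What saves the day (and what you would need to prove) is that the discrepancy $(F_p^p-F_0)_m$ at each step lies in the \emph{image} of $L_m$: if all corrections $P_{m'}$ with $m'<m$ are chosen among $M_p$-resonant monomials $z^{qp\alpha+e_j}$, then $F_p$ stays of the form $M_pz(1+g(z^{p\alpha}))$, hence $F_p^p=\Lambda z(1+G(z^{p\alpha}))$ and the discrepancy only involves $z^{q'\alpha+e_j}$ with $p\,|\,q'$, which $L_m$ does not annihilate. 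This argument is not in your write-up; as stated, your proof rests on an invertibility claim that does not hold.
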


\begin{proof}
We first determine the iterates of the general germ
\[
F_{1}(z)=Mz(1+az^{k\alpha}+bz^{2k\alpha})
\]
with $a,b\in\mathbb{C}$ and $M$ diagonal such that $M^{k\alpha}=1$.
Then for every $m\in\mathbb{N}$, the $m$-th iterate has the form
\[
F_{1}^{m}(z)=M^{m}z(1+a_{m}z^{k\alpha}+b_{m}z^{2k\alpha}+O(z^{3k\alpha})),
\]
and we have
\begin{align*}
(F_{1}^{m}(z))^{k\alpha} & =z^{k\alpha}+a_{m}k|\alpha|z^{2k\alpha}+O(z^{3k\alpha}),\\
(F_{1}^{m}(z))^{2k\alpha} & =z^{2k\alpha}+O(z^{3k\alpha}),
\end{align*}
so
\begin{align*}
F_{1}\circ F_{1}^{m}(z) & =M^{m+1}z(1+a_{m}z^{k\alpha}+b_{m}z^{2k\alpha})(1+az^{k\alpha}+(aa_{m}k|\alpha|+b)z^{2k\alpha}+O(z^{3k\alpha}))\\
 & =M^{m+1}z(1+(a_{m}+a)z^{k\alpha}+(b_{m}+b+a_{m}a(k|\alpha|+1))z^{2k\alpha}).
\end{align*}
From this, we obtain and solve recursive expressions for $m\in\mathbb{N}$:
\begin{align*}
a_{m} & =a_{m-1}+a=ma\quad\text{and }\\
b_{m} & =b_{m-1}+b+(m-1)a^{2}(k|\alpha|+1)\\
 & =mb+\sum_{j=1}^{m-1}ja^{2}(k|\alpha|+1)\\
 & =mb+\frac{m(m-1)}{3}a^{2}(k|\alpha|+1).
\end{align*}
So in particular
\[
F_{1}^{p}(z)=M^{p}z\paren[{\Big}]{1+paz^{k\alpha}+p{{\paren[{\Big}]{b+\frac{p-1}{2}a^{2}(k|\alpha|+1)}}}z^{2k\alpha}+O(z^{3k\alpha})}.
\]
Choose now $M=M_{p}=\diag(\mu_{1},\ldots,\mu_{d})$ such that $M_{p}^{p}=\Lambda$
and $\mu^{\alpha}=\zeta_{p}$, so $F_{1}$ is one-resonant of index
$p\alpha$. For $a=c/p$ and $b=-\frac{p-1}{2}a^{2}(k|\alpha|+1)$,
we then have
\[
F_{1}^{p}(z)=\Lambda z(1+cz^{k\alpha})+O(z^{3k\alpha})=F_{0}(z)+O(z^{3k\alpha}).
\]
Now, by the construction of normal forms for one-resonant germs in
\cite[Theorem~3.6]{BracciZaitsev2013Dynamicsofoneresonantbiholomorphisms},
for any $l\in\mathbb{N}$, there exists a local holomorphic change
of coordinates of the form $\chi(z)=z(1+O(\norm z^{3k|\alpha|}))$
such that
\[
\chi\circ F_{1}^{p}\circ\chi^{-1}(z)=F_{0}(z)+O(\norm z^{l}).
\]
The map $F_{1}$ under this change of coordinates becomes
\begin{align*}
F_{p}(z):=\chi\circ F_{1}\circ\chi^{-1}(z) & =M_{p}z\paren{1+az^{k\alpha}+bz^{2k\alpha}+O({{\norm z}}^{3k|\alpha|})}\\
 & =M_{p}z(1+az^{k\alpha})+O(\norm z^{2k|\alpha|+1}),
\end{align*}
and for any $F(z)=F_{p}(z)+O(\norm z^{l})$, we have
\[
F^{p}(z)=F_{p}^{p}(z)+O(\norm z^{l})=F_{0}(z)+O(\norm z^{l}).\qedhere
\]
\end{proof}
Applying Lemma~\ref{lem:RootGerm} to $F_{0}=F_{{\rm N}}$ as in
(\ref{eq:FNormalForm}) and $l>2kd+1$, shows that for every $p\in\mathbb{N}$
dividing $k$, there exists a germ $F_{p}$ of the form
\[
F_{p}(z)=M_{p}z\paren [\Big]{1-\frac{(z^{1}\cdots z^{d})^{k}}{kdp}}+O(\norm z^{2kd+1}),
\]
where $M_{p}=\diag(\mu_{1},\ldots,\mu_{d})$ with $\mu_{1}\cdots\mu_{d}=\zeta_{p}$,
such that whenever
\begin{equation}
G(z)=F_{p}(z)+O(\norm z^{l}),\label{eq:FRootWithTail}
\end{equation}
we have $G^{p}(z)=F_{{\rm N}}(z)+O(\norm z^{l})$. Again by Theorem~\ref{thm:JetInterp},
there exists an Automorphism $F\in\Aut(\mathbb{C}^{d})$ of the form
(\ref{eq:FRootWithTail}). In this case, $G^{p}(z)$ is an automorphism
of the form (\ref{eq:BiholWithTail}) and has $k$ invariant, non-recurrent,
attracting Fatou components $\Omega_{0},\ldots,\Omega_{k-1}$ at $0$
each biholomorphic to $\mathbb{C}\times(\mathbb{C}^{*})^{d-1}$ via
Proposition~\ref{prop:FatouBihol}, containing the corresponding
local basins $B_{0},\ldots,B_{k-1}$ from Theorem~\ref{thm:BZ}.
Hence, as in dimension $1$, for each $h\in\{0,\ldots,k-1\}$, $\Omega_{h}$
is part of a periodic cycle of Fatou components for $G$ whose period
divides $p$.

To show that the period is equal to $p$, note that for $r>0$ sufficiently
small $z_{r}:=(r,\ldots,\zeta_{k}^{h}r)\in B_{h}$ for each $h\in\{0,\ldots,k-1\}$.
Let $\pi(z)=z^{1}\cdots z^{d}$ for $z=(z^{1},\ldots,z^{d})\in B_{h}$
as usual and $\zeta_{m}:=e^{2\pi i/m}$ for $m\in\mathbb{N}$. Then
\begin{align*}
\pi(F(z_{r})) & =\zeta_{p}\zeta_{k}^{h}r^{d}\paren [\Big]{1-\frac{r^{kd}}{kdp}}^{d}+O(r^{2kd+d})\\
 & =\zeta_{k}^{h+k/p}r^{d}+O(r^{(k+1)d})
\end{align*}
and if $r$ is sufficiently small, we have $\pi(F(z_{r}))\in S_{h+k/p}(R_{0},\theta_{0})$,
and hence $F(z_{r})\in B_{h+k/p}$ (indices modulo $k$). This shows
that $F$ maps $B_{h}$ to $B_{h+k/p}$ and hence the period of $\Omega_{h}$
is equal to $p$, concluding the proof of Theorem~\ref{thm:FatouCCstarPeriodic}.

To derive Corollary~\ref{cor:CCstarVariants}, take an automorphism
$F$ of $\mathbb{C}^{m+1}$ with $k/p$ attracting cycles of period
$p$ from Theorem~\ref{thm:FatouCCstarPeriodic} and set
\[
G:\mathbb{C}^{d}\to\mathbb{C}^{d},\quad(z,w)\mapsto\paren [\Big]{F(z),\frac{1}{2}w}\quad\text{for }z\in\mathbb{C}^{m+1}\text{ and }w\in\mathbb{C}^{d-m-1}.
\]
Then the $w$ component of $\{G^{n}\}_{n\in\mathbb{N}}$ is locally
uniformly convergent to $0$ on all of $\mathbb{C}^{d-m-1}$, so any
subsequence $\{G^{n_{\ell}}\}_{\ell\in\mathbb{N}}$ converges locally
uniformly around $(z,w)\in\mathbb{C}^{d}$ if and only if $\{F^{n_{\ell}}\}_{\ell\in\mathbb{N}}$
does so around $z$. Thus $(z,w)$ is in the Fatou set of $G$ if
and only if $z$ is in the Fatou set of $F$ and the Fatou components
of $G$ are precisely of the form $U\times\mathbb{C}^{d-m-1}$ where
$U$ is a Fatou component of $F$. If $U$ is non-recurrent, $p$-periodic
and attracting to the origin, then so is $U\times\mathbb{C}^{d-m-1}$.

\end{document}